\author[G. S. Alberti]{G. S. Alberti$^1$ \orcidlink{0000-0002-8612-3663}}
\author[E. De Vito]{E. De Vito$^1$ \orcidlink{0000-0002-4320-3292}}
\author[B. Gariboldi]{B. Gariboldi$^{2}$ \orcidlink{0000-0001-8714-4135}}
\author[G. Gigante]{G. Gigante$^{2}$ \orcidlink{0000-0002-1642-679X}}
\address{\phantom{1}$^1$MaLGa Center, Department of Mathematics, University of Genoa, Via Dodecaneso 35, 16146, Genova, Italy}
\email{giovanni.alberti@unige.it}
\email{ernesto.devito@unige.it}
\address{\phantom{i}$^2$Dipartimento di Ingegneria Gestionale, dell'Informazione e
della Produzione, Universit{\`a} degli Studi di Bergamo, Viale G. Marconi 5, 24044,
Dalmine BG, Italy}
\email{biancamaria.gariboldi@unibg.it}
\email{giacomo.gigante@unibg.it}
\begin{document}

\newtheorem{Theorem}{Theorem}
\newtheorem{Corollary}[Theorem]{Corollary}
\newtheorem{Proposition}[Theorem]{Proposition}
\newtheorem{lemma}[Theorem]{Lemma}
\newtheorem{prop}[Theorem]{Proposition}
\theoremstyle{remark}
\newtheorem{remark}[Theorem]{Remark}
\theoremstyle{definition}
\newtheorem{ex}[Theorem]{Example}
\newtheorem{definition}[Theorem]{Definition}
\newtheorem{assumption}[Theorem]{Assumption}

\newcommand{\R}{\mathbb{R}}

\title[Sampling theorems for inverse problems on Riemannian manifolds]{Sampling theorems for inverse problems \\on Riemannian manifolds}

\subjclass{45Q05, 	94A20, 43A85}

\keywords{Inverse problems, sampling, Marcinkiewicz-Zygmund family, convolutions, two-point homogeneous spaces}

\date{January 28, 2026}

\begin{abstract}
We consider inverse problems  consisting of the reconstruction of an unknown signal $f$ from noisy measurements $y=Ff+\text{noise}$, where $Ff$ is a function on a Riemannian manifold without boundary $\mathcal M$. We consider the case when only pointwise samples are available, namely $y_j = (Ff)(x_j)+\eta_j$, where $\{x_j\}_{j=1}^n\subseteq\mathcal M$ is a Marcinkiewicz-Zygmund family. We derive sampling theorems providing explicit bounds on the reconstruction error depending on $n$, the smoothness of $f$ and the properties of $F$. We study in detail the case when $F$ is a convolution on a compact two-point homogeneous space. As a corollary, we state a sampling theorem for convolutions on the two-dimensional sphere, and discuss four relevant examples related to terrestrial and celestial measurements.

\end{abstract}

\maketitle


\section{Introduction}

\subsection{Sampling and approximation}\label{sub:Sampling-intro}

The classical Shannon sampling theorem, a cornerstone of signal processing and harmonic analysis, asserts that a bandlimited function $f \in L^2(\mathbb{R})$ with spectral support contained in an interval $[-\Omega, \Omega]$ is entirely determined by its values on the discrete set $\{x_j=j/(2\Omega)\}_{j\in\mathbb{Z}}$, and can be exactly reconstructed via the Shannon-Whittaker interpolation formula \cite{Whittaker1915,Nyquist1928,Shannon1949}. This fundamental result highlights the intrinsic link between the bandwidth $\Omega$ — the measure of frequency content — and the density $\frac{1}{2\Omega}$ of the samples required for perfect reconstruction. 
In higher dimensions, analogous results hold for functions in $L^2(\mathbb{R}^d)$ whose Fourier transforms are supported in compact sets: such functions can be reconstructed from their values on suitable lattices, provided the sampling rate exceeds the Nyquist threshold determined by the volume of the spectral support \cite{Landau1967,BenedettoFerreira2001}.

On compact domains such as the torus $\mathbb{T}^d$, the sphere $S^d$, or a compact Riemannian manifold  $\mathcal{M}$ without boundary, where the frequency domain is discrete, the situation becomes even more structured: bandlimited functions $f$ become finite-dimensional, and exact recovery from a finite number of measurements $\{f(x_j)\}_j$ is possible, provided the samples $\{x_j\}_j$ are suitably taken \cite{mcewen-wiaux-2011,Gro,LW,LLG}. While the case of the torus is trivial because it is enough to consider uniform Cartesian grids \cite{Gro}, the construction of the samples $\{x_j\}_j$ is, in general, a difficult problem. This can be formalized with the concept of Marcinkiewicz-Zygmund family, whose construction is guaranteed whenever the sampling space can be divided into regions, each with a prefixed volume and small diameter \cite{MNW,maggioni,FM,FM1}. This type of partition, on the other hand, exists in every connected compact orientable Riemannian manifold without boundary \cite{GL, cubature}, and is explicit in simple settings, such as for the two-dimensional sphere \cite{RSZ}. As with the Shannon theorem, the bandwidth of $f$ is explicitly linked with the density of the sampling points $\{x_j\}_j$. All these results have an ``approximation'' counterpart in the cases where $f$ is not bandlimited but smooth, an assumption that is quantified by the decay of the generalized Fourier coefficients of $f$.

\subsection{Inverse problems}

Inverse problems appear when an unknown signal $f$ belonging to a space $X$ has to be recovered from indirect measurements of the form $Ff$, where $F\colon X\to Y$ is the so-called forward map, and $Y$ is the measurement space \cite{bertero2021introduction,isakov-2017,handbook}. Inverse problems are typically ill-posed: whenever measurements are noisy, so that only an approximation $y$ of $Ff$ is available, where $\|y-Ff\|_Y$ is small, the reconstruction of an approximation of $f$ is problematic, because the inverse of $F$ is not continuous. In the linear setting, this is often a consequence of the compactness of $F$. For example, this is the case whenever $F$ is given as an integral operator with a suitable kernel, such as a convolution operator or the Radon transform \cite{natterer-2001,radon100}. The classical method to address ill-posedness is regularization \cite{engl-etal-1996,kaltenbacher-neubauer-scherzer-2008}.

Even if $X$ and $Y$ are usually infinite-dimensional function spaces, which is due to the fact that the models often involve integral or differential operators, the available measurements are always finite dimensional \cite{kaipio-somersalo-2007,lassas-etal-2009,adcock-hansen-etal-2013,kekkonen-lassas-siltanen-2014,Giovanni,alberti-arroyo-santacesaria}. 
In many settings, assuming that $Y$ is a space of functions defined on a  manifold $\mathcal{M}$, the finite measurements can be written as noisy pointwise samples of $Ff$ on a suitable family of points $\{x_j\}_j\subset\mathcal{M}$, namely,
\begin{equation}\label{eq:intro-yj}
y_j = (Ff)(x_j)+\eta_j,\qquad j=1,\dots,N,
\end{equation}
where $\{\eta_j\}_j$ represents measurement noise. This problem is often studied in a random design setting, in which the points $x_j$ are sampled i.i.d.\ with respect to a fixed probability distribution on $\mathcal M$ \cite{dashti-harris-stuart-2012}. Several results exist in this context, regarding approaches based on statistical inverse learning (mostly for linear problems)  \cite{blanchard-mucke-2018,abhishake-blanchard-mathe-2020,bubba-ratti-2022,bubba-burger-helin-ratti-2023,helin-2024,AbhishakeHelinMücke+2025+201+244}
and approaches studying the statistical consistency for nonlinear problems \cite{abraham-nickl-2019,giordano-nickl-2020,nickl-2023,nickl-paternain-2023}. Most of these works are based on smoothness/Gaussian priors on $f$; for sparsity-based priors, several works have investigated the use of compressed sensing techniques to further reduce the number of measurements in inverse problems, see e.g.\ \cite{herrholz-teschke-2010,alberti-santacesaria-2021,ebner-haltmeier-2023,lazzaro-morigi-ratti-2024,alberti2025compressed,alberti2025compressed2}. In the context of a deterministic design, we refer the reader to \cite{stefanov-2020,stefanov-tindel-2023,monard-stefanov-2023}, where the authors study the case of Fourier Integral Operators $F$.

\subsection{Our contribution: sampling for inverse problems}

In this work, we address the inverse problem consisting of the recovery of $f$ from the measurements $\{y_j\}_j$ given in \eqref{eq:intro-yj} by applying the techniques based on sampling and approximation discussed in $\S$\ref{sub:Sampling-intro}. In contrast to the approaches based on random design points mentioned above, here the points $\{x_j\}_j$ are deterministic and chosen to form a Marcinkiewicz-Zygmund family, which as we mentioned before can be achieved e.g. in any orientable compact connected Riemannian manifold without boundary. We consider abstract inverse problems  under suitable smoothness assumptions on the forward map $F$. We prove recovery estimates and explicitly quantify the error as a function of the noise level and of the number of samples.

Furthermore, we leverage the flexibility of considering general Riemannian manifolds beyond the Euclidean setting and consider convolution operators $F$ on compact two-point homogeneous spaces, which allows us to write $F$ as Fourier multiplier  (with respect to the basis given by the eigenfunctions of the Laplace-Beltrami operator on $\mathcal M$). In this setting, the inverse problem consists in the reconstruction of a function $f$ from noisy pointwise samples of $(h*f)(x_j)$, where $h$ is a given filter.
As a key application, we consider the convolution on the two-dimensional sphere \cite{vanrooij-ruymgaart-1991,healy-etal-1998,kim-koo-2002,kerkyacharian-etal-2011,vareschi-2014}, in the spirit of the results obtained in \cite{hielscher-quellmalz-2015} with pointwise samples. We obtain explicit error estimates, and quantify how the smoothness of $f$ and the degree of ill-posedness of $F$ (namely, the decay of the generalized Fourier coefficients of $h$) affect the reconstruction error and the number of required samples. These estimates are then applied to four examples of deconvolution problems on the sphere  \cite{tesi,articolotesi}.

The focus of our investigation on two-point homogeneous spaces is justified by the fact that, on the one hand, they contain  the sphere $S^d$ as a particular case (for which the construction of design points is anyway nontrivial), and  all the construction can be carried out in the abstract setting with little additional effort. Indeed, thanks to Wang's theorem  \cite{Wang}, these spaces are all and only the compact symmetric spaces of rank one. On the other hand, going beyond two-point homogeneous spaces is not trivial, because convolutions are harder to represent, and in general the harmonic analysis for spaces of higher rank is more difficult.

\subsection{Structure of the paper}
In Section~\ref{sec:sphere} we anticipate our results specialized for the convolution problem on the sphere, in order to better illustrate the abstract approach presented in the rest of the paper. These results are applied to four different examples in Section~\ref{sec:examples}.  In Section~\ref{sec:sampling} we review some basic facts about sampling and approximation for functions on manifolds, based on Marcinkiewicz-Zygmund families. In Section~\ref{sec:sampling_IP} we apply these results to abstract inverse problems. Finally, in Section~\ref{sec:convolution} we apply the abstract derivation to convolutions on compact two-point homogeneous spaces.

\section{Convolution on the sphere}\label{sec:sphere}

In this section, motivated by several applications, we anticipate our results specialized for the convolution operator on the sphere of the Euclidean space. 

\subsection{The abstract result for the sphere \texorpdfstring{$S^d$}{Sd}}

We briefly introduce the notation and setting. 
Let $S^d$ be the sphere in $\mathbb R^{d+1}$ endowed with the
Riemann measure $\mu$ normalized as $\mu(S^d)=1$. 
The orthogonal group $SO(d+1)$ acts transitively on $S^d$ and its
action is denoted by $gx$ where $g\in SO(d+1)$ and $x\in S^d$.
Letting $o\in  S^d$ denote the north pole, the stability subgroup at $o$ is 
\[\{g\in SO(d+1) : go=o\}= SO(d), \]
so that we identify $S^d$ with the quotient space $SO(d+1)/SO(d)$
by means of the map
\[
SO(d+1) \ni g\mapsto g o \in  S^d.
\]
For a fixed  filter function $h\in L^1(S^d,\mu)$, the convolution operator
$F\colon L^2(S^d)\to L^2(S^d)$ by $h$, $Ff = h*f$, is defined as 
\[
(F f)(x) = \int_{SO(d+1)} h(g^{-1}x) f(g o) dg \,,
\]
where $dg$ is the Haar measure of $SO(d+1)$ normalized to $1$.
It is well known that the convolution operator has a nice structure in terms of spherical
harmonics. Indeed, let $\Delta$ be the  Laplace-Beltrami operator on $S^d$,
regarded as a self-adjoint  positive operator on $L^2(S^d)$.  Its
(distinct) eigenvalues are
\[
\lambda_m^2 = m(m+d-1) \qquad m\in \mathbb N. 
\]
For each $m\in \mathbb N$,  let $\mathcal{H}_{m}$ be the space of the 
eigenfunctions  of $\Delta$ with eigenvalue  $\lambda_m^2$. The space
$ \mathcal{H}_{m}$ is finite dimensional, and we denote its dimension by
\[
\dim\mathcal{H}_{m}=\delta_m= \binom{d+m}{d}- \binom{d+m-2}{d}\asymp m^{d-1}
\] 
and the corresponding orthogonal projection by $\Pi_m$.
Finally, let
\begin{equation}\label{diffusion_pol}
    \mathcal Q_m  = \bigoplus_{\ell=0}^m \mathcal H_\ell \qquad m\in\mathbb N,
\end{equation}
be the space of diffusion polynomials of bandwith
$\lambda_m$. 

Assume now that the filter function  $h\in L^2(S^d)\subset L^1(S^d)$. By setting
  \begin{equation} \label{eq:1}
b_m = \frac{1}{\delta_m} (\Pi_m h)(o) \qquad m\in \mathbb
N,
\end{equation}
the convolution operator $F$ with filter $h$ reads as
\begin{equation}\label{eq:2}
 F f = \sum_{m=0}^\infty b_m \Pi_m f \qquad f\in L^2(S^d),
\end{equation}
{\it i.e.} $F$ is a Fourier multiplier.  Note that, for any bounded sequence
$\{b_m\}_{m\in\mathbb N}$, Eq.~\eqref{eq:2} defines a bounded operator,
which in general is not a convolution operator, compare with the definition of generalized convolution operators in \cite{quellmalz2018cone}. For example,  if $b_m=1$  for all $m\in \mathbb N$, then $F$ is the identity
operator. We stress that  our results hold true also for this larger class of operators.

We need to recall the definition of Sobolev
spaces $H^\sigma(S^d)$ on the sphere. For $\sigma\geq 0$, we have that
\[
H^\sigma(S^d)=\left\{f\in L^2(S^d):\sum_{m=0}^{+\infty}
  (1+{\lambda}_m^2)^\sigma \lVert\Pi_m f   \rVert^2<+\infty\right\} 
\]
is a Hilbert space with respect to the norm 
\[
\|f\|^2_{H^\sigma(S^d)}=\sum_{m=0}^{+\infty}(1+{\lambda}_m^2)^\sigma\lVert \Pi_m f
  \rVert^2 .
\]

The following proposition provides some smoothing properties of the convolution operator; it follows from the general case treated in Proposition~\ref{operator}. 

\begin{Proposition}\label{operator-1}
Choose $h\in L^2(S^d)$, set $b_m$ as in~\eqref{eq:1} and assume that
  \begin{equation}
|b_m|\le c(1+ {\lambda}^2_m )^{-\frac{\gamma}2}\label{eq:4}
\end{equation}
for some $c,\gamma\ge 0$. Fix $\omega\geq 0$ and $\sigma=\omega+\gamma$, then
$F$ is a bounded operator from $H^\omega(S^d)$ into $ H^\sigma(S^d)$,
with norm bounded by $c$. 
\end{Proposition}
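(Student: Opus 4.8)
The plan is to exploit the fact that both the operator $F$ and the Sobolev norms are diagonal with respect to the orthogonal decomposition $L^2(S^d)=\bigoplus_m \mathcal{H}_m$, so the whole estimate reduces to a termwise comparison of the spectral weights. The starting observation is that since the projections satisfy $\Pi_m \Pi_k = \delta_{mk}\Pi_m$, applying $\Pi_m$ to the series $Ff=\sum_k b_k\Pi_k f$ collapses it to a single term, giving $\Pi_m(Ff)=b_m\,\Pi_m f$. This is the only structural fact needed, and it follows immediately from~\eqref{eq:2}.

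With this in hand I would simply insert the decomposition of $Ff$ into the definition of the $H^\sigma$-norm. Writing
\[
\|Ff\|_{H^\sigma(S^d)}^2=\sum_{m=0}^{\infty}(1+\lambda_m^2)^{\sigma}\,\|\Pi_m(Ff)\|^2=\sum_{m=0}^{\infty}(1+\lambda_m^2)^{\sigma}\,|b_m|^2\,\|\Pi_m f\|^2,
\]
I would then apply the hypothesis~\eqref{eq:4} in the form $|b_m|^2\le c^2(1+\lambda_m^2)^{-\gamma}$ term by term. Since $\sigma=\omega+\gamma$, the weights combine as $(1+\lambda_m^2)^{\sigma}(1+\lambda_m^2)^{-\gamma}=(1+\lambda_m^2)^{\omega}$, so the sum is bounded by $c^2\sum_m(1+\lambda_m^2)^{\omega}\|\Pi_m f\|^2=c^2\|f\|_{H^\omega(S^d)}^2$.

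Taking square roots yields $\|Ff\|_{H^\sigma(S^d)}\le c\,\|f\|_{H^\omega(S^d)}$ for every $f\in H^\omega(S^d)$, which is exactly the claimed boundedness with operator norm at most $c$. One should record along the way that the computation simultaneously shows $Ff$ genuinely lies in $H^\sigma(S^d)$ whenever $f\in H^\omega(S^d)$, since the right-hand side is finite; there is no separate convergence issue to address because every estimate is a monotone comparison of nonnegative series. I do not anticipate any real obstacle here: the result is a direct consequence of the multiplier representation, and the only thing to verify carefully is the bookkeeping of the exponents, namely that the gain of $\gamma$ derivatives in the filter is precisely what compensates the loss when passing from $H^\omega$ to $H^\sigma$ with $\sigma=\omega+\gamma$.
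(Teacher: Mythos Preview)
Your proof is correct and is essentially identical to the paper's argument: the paper proves this as a special case of a more general proposition, whose proof is precisely the termwise comparison $\|Ff\|_{H^\sigma}^2=\sum_m |b_m|^2(1+\lambda_m^2)^\sigma\|\Pi_m f\|^2\le c^2\sum_m(1+\lambda_m^2)^{\sigma-\gamma}\|\Pi_m f\|^2=c^2\|f\|_{H^\omega}^2$ that you wrote out.
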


The next result shows the existence of a nice discretization family of
points for every diffusion polynomial $p\in\mathcal Q_m $, see Eq.~\eqref{diffusion_pol}. It is a particular case of Theorems~\ref{MZ polys} and \ref{partition}.
\begin{Proposition}\label{discr}
Fix $0<\varepsilon<1$ and $m\in\mathbb N$. There exists a
finite family $\{ x_j\}_{j=1}^{L_m}$ of distinct points of $S^d$ such that
  \begin{equation}
(1-\varepsilon) \|p\|_2^2\le \frac{1}{L_m} \sum_{j=1}^{L_m}|p(x_{j})|^2 \le
  (1+\varepsilon) \|p\|_2^2, \qquad p \in \mathcal Q_m\label{eq:8}
\end{equation}
and  $L_m= \lceil C_1 ({\lambda}_m^2)^\frac d2 \rceil\asymp m^d$ for
  some constant $C_1>0$. 

\end{Proposition}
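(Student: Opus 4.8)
The plan is to read \eqref{eq:8} as a quadrature estimate and to assemble it from three ingredients: a fine, measure-preserving partition of $S^d$, a Bernstein-type gradient bound for diffusion polynomials, and the elementary fact that $|p|^2$ has a controllable $L^1$-gradient. First I would invoke the partition construction (the specialization of Theorem~\ref{partition}): for each integer $L$ there is a decomposition $S^d=\bigcup_{j=1}^{L}R_j$ into regions with pairwise disjoint interiors, $\mu(R_j)=1/L$, and $\operatorname{diam}(R_j)\le c\,L^{-1/d}$ for a geometric constant $c=c(d)$. Choosing an interior point $x_j\in R_j$ in each cell produces $L$ distinct points; write $\delta=c\,L^{-1/d}$ for the mesh size.

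Next I would record the Bernstein inequality on $\mathcal Q_m$. Writing $p=\sum_{\ell=0}^m\Pi_\ell p$ and integrating by parts against the (positive) Laplacian,
\[
\int_{S^d}|\nabla p|^2\,d\mu=\langle \Delta p,p\rangle=\sum_{\ell=0}^m\lambda_\ell^2\,\lVert\Pi_\ell p\rVert_2^2\le \lambda_m^2\,\lVert p\rVert_2^2,
\]
so that $\lVert\nabla p\rVert_2\le\lambda_m\lVert p\rVert_2$. For the quadrature error, since $\mu(R_j)=1/L$,
\[
\Big|\,\tfrac1L\sum_{j=1}^L|p(x_j)|^2-\lVert p\rVert_2^2\,\Big|
\le\sum_{j=1}^L\int_{R_j}\big||p(x_j)|^2-|p(x)|^2\big|\,d\mu(x).
\]
For $x\in R_j$ I would join $x$ to $x_j$ by a minimizing geodesic of length $\le\delta$ and integrate $\nabla|p|^2=2\,\mathrm{Re}(\bar p\,\nabla p)$ along it; summing over $j$ and using that the geodesic tubes have bounded overlap, together with $\big|\nabla|p|^2\big|\le 2|p|\,|\nabla p|$ and Cauchy--Schwarz, gives
\[
\sum_{j=1}^L\int_{R_j}\big||p(x_j)|^2-|p(x)|^2\big|\,d\mu
\le C\delta\int_{S^d}\big|\nabla|p|^2\big|\,d\mu
\le 2C\delta\,\lVert p\rVert_2\,\lVert\nabla p\rVert_2
\le 2C\delta\lambda_m\,\lVert p\rVert_2^2 .
\]

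Since $\delta=cL^{-1/d}$, this bounds the quadrature error by $2Cc\,\lambda_m L^{-1/d}\lVert p\rVert_2^2$. Imposing $2Cc\,\lambda_m L^{-1/d}\le\varepsilon$ forces $L\ge(2Cc/\varepsilon)^d\lambda_m^d$, so I would set $C_1=(2Cc/\varepsilon)^d$ and $L_m=\lceil C_1(\lambda_m^2)^{d/2}\rceil$; this choice makes the error at most $\varepsilon\lVert p\rVert_2^2$, which is exactly \eqref{eq:8}. Finally $L_m\sim\lambda_m^d\sim m^d$ because $\lambda_m^2=m(m+d-1)\asymp m^2$, the $\varepsilon$-dependence being absorbed into $C_1$.

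The step I expect to be delicate is the geometric one: passing from the cellwise oscillations to $C\delta\lVert\nabla|p|^2\rVert_{L^1}$ with a constant $C$ independent of $p$ and of the individual cell. This requires the geodesics joining each $x$ to its cell representative to sweep out tubes that cover $S^d$ with bounded multiplicity, which in turn relies on the roundness of the cells guaranteed by the partition construction. The Bernstein inequality and the final bookkeeping are routine; it is this uniform control of the partition geometry that does the real work, and it is precisely what the general Theorems~\ref{MZ polys} and~\ref{partition} are designed to supply.
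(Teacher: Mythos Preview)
Your closing paragraph is exactly what the paper does: Proposition~\ref{discr} is stated as ``a particular case of Theorems~\ref{MZ polys} and~\ref{partition}'', with no further argument. So as a proof that simply invokes those two results, your proposal matches the paper.

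The direct sketch you give before that deferral, however, does not go through at the step you yourself flag. ``Bounded overlap of the geodesic tubes'' as \emph{sets} does not yield
\[
\sum_{j}\int_{R_j}\bigl||p(x_j)|^2-|p(x)|^2\bigr|\,d\mu\ \le\ C\delta\int_{S^d}\bigl|\nabla|p|^2\bigr|\,d\mu .
\]
Writing the oscillation as a line integral and integrating over $x\in R_j$ produces the pushforward of $d\mu(x)\otimes dt$ under $(x,t)\mapsto\gamma_{j,x}(t)$; already in the Euclidean model this pushforward has density comparable to $\rho(x_j,\cdot)^{\,1-d}$ near $x_j$, hence is \emph{not} dominated by a fixed multiple of $\mu$ for $d\ge 2$. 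So the tubes overlap boundedly as regions, but the induced measure does not, and the $L^1$ gradient bound together with your Bernstein step is insufficient. The proof of Theorem~\ref{MZ polys} in the cited reference does not proceed this way; it exploits the polynomial structure of $|p|^2\in\mathcal Q_{2m}$ more heavily, via localized reproducing kernels for diffusion polynomials and their Lipschitz estimates, which sidestep the singularity.

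In short: your invocation of Theorems~\ref{MZ polys} and~\ref{partition} is the paper's proof; your heuristic re-derivation of the former is instructive but would need a different mechanism at the geometric step to stand on its own.
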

  
It is possible to provide an explicit construction of the
discretization family $\{ x_j\}_{j=1}^{L_m}$; we describe the case $d=2$ in
Section~\ref{sec:an-example-family}. 
The points $\{ x_j\}_{j=1}^{L_m}$ are called a Marcinkiewicz-Zygmund
family. The map
\[
p \in \mathcal Q_m \longmapsto \{p(x_j)\}_{j=1}^{L_m} \in \mathbb{C}^{L_m}
\]
is $\varepsilon$-close to an isometry, which allows for the stable recovery of $p \in \mathcal Q_m$ from its pointwise samples  $\{p(x_j)\}_{j}$.

We now state our main result for the sphere, which is a particular case of Theorem~\ref{finale} and Corollary \ref{inverso}.  The focus is the reconstruction of $f^\dagger$, where the measurement data are the noisy values of the function $Ff^\dagger=h*f^\dagger$ sampled at the discretization points  $\{x_j\}_{j=1}^{L_m}$, namely,
\[
y_j = (h* f^\dagger)(x_j) + \eta_j,
\]
where the noise $\eta_j$ is bounded by $\beta$.
\begin{Theorem}\label{main}
Let $h\in L^2(S^d)$  such that $b_0\neq 0$ and Eq.~\eqref{eq:4}
holds true for some $\gamma\ge0$.  Fix $f^\dagger\in  H^\omega(S^d)$  with $\omega\ge0$, and assume that for some $\zeta\ge0$,
$\sigma=\omega+\gamma>d/2+\zeta$.  Given $m\in\mathbb N$, let $\{
x_j\}_{j=1}^{L_m}$ be the discretization  family of points given by
Proposition~\ref{discr} and let $\{y_j\}_{j=1}^{L_m}$ be the corresponding
family of noisy outputs such that
\[
 |y_j- Ff^\dagger(x_j)|\leq\beta \quad j=1,\ldots,L_m
\]
where $\beta \ge 0$ is the noise level.  Choose $p_m^\beta\in \mathcal Q_m$ as 
  \begin{equation}
p_m^\beta \in \operatornamewithlimits{argmin}_{p\in\mathcal Q_m} 
\frac{1}{L_m} \sum_{j=1}^{L_m}|y_j-Fp(x_j)|^2  \label{eq:3},
\end{equation}
then 
  \begin{align}
\Vert Ff^\dagger - Fp_m^\beta\rVert_{H^\zeta(S^d)} \leq &\sqrt{\frac{1
  +\kappa}{\sigma-\zeta-\frac d2}}\, \Vert Ff^\dagger
\rVert_{H^\sigma(S^d)}  (1+\lambda_m^2)^{-\frac{\sigma-\zeta}2+\frac d4} +
\sqrt{\kappa} \beta(1+\lambda_m^2)^{\frac{\zeta}2}\label{eq:5}
\end{align}
where $\kappa= \frac{1+\varepsilon}{1-\varepsilon}$. If furthermore $\zeta\ge\gamma$ and for some constant $c_0>0$,
\begin{equation}\label{below}
|b_m|\ge c_0(1+\lambda_m^2)^{-\zeta/2} \text{ for all }m\ge 0,
\end{equation}
then we have
 \begin{equation}\label{eq:29}
    \|f^\dagger-p_m^\beta\|_2\le c_0^{-1}\|Ff^\dagger-Fp_m^\beta\|_{H^\zeta(S^d)}.
\end{equation}
\end{Theorem}

The function $p_m^\beta$ is a solution of a least squares problem~\eqref{eq:3} having the space of diffusion polynomials (with a given bandwith) $\mathcal Q_m$ as hypothesis space (as in regularization by projection \cite{kaltenbacher-neubauer-scherzer-2008}). Given the hypothesis space $\mathcal Q_m$, the minimizer $p_m^\beta$ of \eqref{eq:3}  depends on the measured noisy data $y_j= Ff^\dagger(x_j)+\eta_j$, where the noise $\eta_j$ is bounded by $\beta$. Note that
$\mathcal Q_m$ has dimension $\sum_{\ell=0}^m\delta_\ell\asymp m^{d}$ and the number of
discretization points is $L_m\asymp m^d$. Our result shows that,  for a sufficiently small noise level $\beta$, $Fp_m^\beta$
is a good approximation (in the $H^\zeta$-norm) of the true value $h*f^\dagger$, and that, under hypothesis \eqref{below}
on $h$, $p_m^\beta$ is a good approximation (in the $L^2$ norm) of $f^\dagger$.
Eqs.~\eqref{eq:5} and~\eqref{eq:29} provide a bound on the reconstruction error. 
The parameter $\varepsilon$ does not play a crucial role in the estimates, and we can choose $\varepsilon=\frac12$, so that 
 $\kappa=3$. On the other hand, $\sigma$ is related to the apriori information on the smoothness of the true solution $f^\dagger$, while $\zeta$ is related to the norm with which the error is measured. The parameter to be tuned is $m$ as a function of the noise level $\beta$. 
In order to highlight this dependence, let us write the recovery estimate without the explicit constants, by using that $\lambda_m^2\asymp m^2$ for $m\geq 1$: 
\[
\|f^\dagger-p_m^\beta\|_2 \lesssim m^{\zeta-\sigma+\frac{d}{2}} + \beta m^\zeta.
\]
For a fixed noise level $\beta$, this leads us to choose $m$ in such a way that the two terms have the same order, namely $m^{\zeta-\sigma+\frac{d}{2}}\asymp \beta m^\zeta$, so that 
\begin{equation}\label{eq:mandbeta}
    m= \left\lceil\beta^{-\frac{1}{\omega+\gamma-\frac{d}{2}}}\right\rceil,
\end{equation}
where we used that $\sigma=\omega+\gamma$. Plugging this into the bound above yields
\begin{equation}\label{eq:asymptotic}
\|f^\dagger-p_m^\beta\|_2 \lesssim \beta^{1-\frac{\zeta}{\omega+\gamma-\frac{d}{2}}}.
\end{equation}
As expected, since $\omega+\gamma-\frac{d}{2}>\zeta$ by assumption, as $\beta\to 0$, we have $p_m^\beta\to f^\dagger$, provided that the number of measurements increases accordingly to \eqref{eq:mandbeta}. Furthermore, it is possible to understand the role of the parameters involved in these estimates:
\begin{itemize}
    \item the parameter $\omega$ such that $f^\dagger\in H^\omega(S^2)$ quantifies the smoothness assumption on $f^\dagger$;
    \item the parameters $\gamma$ and $\zeta$ quantify the smoothness of $h$ through the decay of its generalized Fourier coefficients in \eqref{eq:4} and  \eqref{below}, and will usually be of the same order.
\end{itemize}
Therefore, according to \eqref{eq:mandbeta}, fewer samples are needed as $\gamma$ and $\omega$ increase (in other words, as the problem becomes more ill-posed and as $f^\dagger$ becomes smoother). Similarly, in light of \eqref{eq:asymptotic}, a smoother $f^\dagger$ (larger $\omega$) yields a smaller reconstruction error, but a more ill-posed problem (larger $\gamma$ and $\zeta$) yields a larger error.

In \cite{hielscher-quellmalz-2015}, results similar to ours were obtained for the two-dimensional sphere, but in a setting with statistical noise and a different choice of approximating polynomial. In particular, exact knowledge of the multiplier associated with the operator is required, and the sampling points must form a spherical design—a construction that is more challenging than that of the Marcinkiewicz-Zygmund
family provided by Proposition~\ref{discr}.

\subsection{The particular case of the two-dimensional sphere \texorpdfstring{$S^2$}{S2}}\label{sec:an-example-family}

We provide an example of a discretization family for the two-dimensional sphere. It can be shown (see \cite[Theorem 5.1]{FM}) that  if one can divide the manifold (in this case the sphere $S^2$) into $N$ regions with the same measure and bounded diameter, then every choice of points, one in each region, gives a discretization family, as in Proposition~\ref{discr}. For the $2$-dimensional sphere, we describe an explicit partition  \cite{RSZ}. Observe that this type of costruction has been done also for the case of the $d$-dimensional sphere (see \cite{leopardi}).
 Fix $m\in\mathbb N$ and let $N=L_m$ be the size of the
discretization family.  Let 
\begin{itemize}
    \item $\theta_0=\arccos(1-50/N)$;
    \item $s$ be the largest odd integer smaller than or equal to $ \sqrt{\pi N}/2$;
    \item $\Delta\theta=(\pi-2\theta_0)/s$;
    \item $\theta_k'=\theta_0+k\Delta\theta$ for $k=1,2,\ldots,s$, $\theta'_{s+1}=\pi$;
    \item and $y_k=N(\cos\theta_{k-1}'-\cos\theta_k')/2$, for $k=1,2,\ldots,s$. 
\end{itemize}
It can be proved that there exists a symmetric sequence of integers $\{\ell_i\}_{i=1}^s$ such that
\begin{enumerate}
\item $\sum_{i=1}^s{\ell}_i=\sum_{i=1}^sy_i=N-50$.
\item $|y_1-{\ell}_1|=|y_s-{\ell}_s|\le1/2$ and $|y_i-{\ell}_i|\le1$ for $i=2,\ldots s-1$.
\item $\left|\sum_{i=1}^k(y_i-{\ell}_i)\right|\le1/2$, for $k=1,\ldots,s$. 
\end{enumerate}
Finally set ${\ell}_0={\ell}_{s+1}=25$.
It is shown in \cite{RSZ} that for $N\ge 50$, the regions of the sphere described in spherical coordinates as
$D^N_{k,j}=[\theta_{k-1},\theta_k]\times[2\pi(j-1)/{\ell_k},2\pi
j/{\ell}_k]$, for $k=0,\ldots,s+1$,
$j=1,\ldots,{\ell}_k$, where
$\theta_k=\arccos(1-(2/N)\sum_{i=0}^k{\ell}_i)$ for
$k=0,\ldots, s+1$, and $\theta_{-1}=0$, form a partition of the sphere
of regions with equal area $1/N$, each contained in a spherical cap of radius bounded above by $c_4N^{-1/2}$, for some $c_4>0$. It can be easily shown that all regions contain a spherical cap of radius $c_3N^{-1/2}$, for some $c_3>0$. 

Therefore we can give the following constructive version of Proposition~\ref{discr} (see also Theorem~\ref{MZ polys}).
\begin{Proposition}\label{discr1}
Fix $0<\varepsilon<1$ and $m\in\mathbb N$. For every possible choice of $x_{k,j}\in D_{k,j}^{L_m}$, the family $\{x_{k,j}\}_{k=0, j=1}^{s+1,\ell_k}$ is such that
  \begin{equation}
(1-\varepsilon) \|p\|_2^2\le \frac{1}{L_m} \sum_{k=0}^{s+1}\sum_{j=1}^{\ell_k}|p(x_{k,j})|^2 \le
  (1+\varepsilon) \|p\|_2^2, \qquad p \in \mathcal Q_m\label{eq:8bis}
\end{equation}
whenever  $L_m= \lceil C_1 m(m+1) \rceil$ for
  some constant $C_1>0$.

\end{Proposition}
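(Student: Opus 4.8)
The plan is to deduce Proposition~\ref{discr1} directly from the general Marcinkiewicz--Zygmund machinery (Theorems~\ref{MZ polys} and \ref{partition}, i.e.\ \cite[Theorem 5.1]{FM}) applied to the explicit partition of $S^2$ recalled just above. The underlying principle states that, for a fixed tolerance $0<\varepsilon<1$, there is a threshold constant $c(\varepsilon)>0$, depending only on $\varepsilon$ and on the dimension $d=2$, with the following property: if $S^2$ is partitioned into regions of equal measure, each contained in a spherical cap of radius $\rho$, and if the bandwidth satisfies $\rho\,\lambda_m\le c(\varepsilon)$, then for every choice of one point in each region the resulting family satisfies the sampling inequality \eqref{eq:8} (equivalently \eqref{eq:8bis}) for all $p\in\mathcal Q_m$. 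Thus the whole task reduces to checking that the RSZ regions $D^{L_m}_{k,j}$ form such a partition and that the radius threshold can be met by choosing $C_1$ large enough.

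First I would verify that the family $\{D^{L_m}_{k,j}\}$ is a genuine equal-measure partition of $S^2$ into exactly $L_m$ cells. By the stated properties of the integer sequence $\{\ell_i\}$ we have $\sum_{i=1}^s\ell_i=N-50$, and since $\ell_0=\ell_{s+1}=25$ the total number of cells is $\sum_{k=0}^{s+1}\ell_k=(N-50)+25+25=N=L_m$; as recalled above, each cell has area exactly $1/N=1/L_m$, which matches the normalization $\mu(S^2)=1$ and the $\tfrac1{L_m}$ factor in \eqref{eq:8bis}. Moreover, by \cite{RSZ}, every $D^{L_m}_{k,j}$ is contained in a spherical cap of radius at most $\rho:=c_4 N^{-1/2}$.

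Next I would carry out the constant chase that fixes $C_1$. For $d=2$ one has $\lambda_m^2=m(m+1)$, so taking $L_m=\lceil C_1 m(m+1)\rceil\ge C_1\lambda_m^2$ gives
\[
\rho\,\lambda_m \;=\; c_4\,L_m^{-1/2}\,\lambda_m \;\le\; c_4\,(C_1\lambda_m^2)^{-1/2}\,\lambda_m \;=\; \frac{c_4}{\sqrt{C_1}}.
\]
Hence, choosing $C_1\ge (c_4/c(\varepsilon))^2$ — a choice depending only on $\varepsilon$ through $c(\varepsilon)$ — we obtain $\rho\,\lambda_m\le c(\varepsilon)$, and the general principle yields \eqref{eq:8bis} for every admissible choice of the points $x_{k,j}$.

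The essential content of the statement is entirely carried by the two external ingredients — the geometric estimate of \cite{RSZ} (equal areas and the cap radius $c_4 N^{-1/2}$) and the quantitative MZ inequality of \cite{FM} — so no new analysis is required here; the only genuine point to get right is the interplay between $C_1$, $\varepsilon$, and the radius constant $c_4$, together with the bookkeeping that makes the number of cells equal to $L_m$. I expect this last bookkeeping (confirming $\sum_{k}\ell_k=L_m$, so that the $\tfrac1{L_m}$ normalization in \eqref{eq:8bis} is the correct one) to be the only place where care is needed, since everything else is a direct substitution into the cited threshold.
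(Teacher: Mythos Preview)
Your proposal is correct and follows exactly the route the paper intends: the proposition is stated in the paper without an explicit proof, being deduced from the preceding description of the RSZ equal-area partition together with Theorem~\ref{MZ polys} (\cite[Theorem~5.1]{FM}), and your write-up spells out precisely this combination. The only minor point not made explicit is that the RSZ construction requires $N\ge 50$, so $C_1$ must also be taken large enough to guarantee $L_m\ge 50$ for the relevant $m$; this is absorbed into your ``$C_1$ large enough'' and is harmless.
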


\section{Four examples}\label{sec:examples}

In this section, we discuss four examples related to Theorem~\ref{main} for the two-dimensional sphere $S^2$, which are motivated by real-world applications. For further details, the reader is referred to \cite{tesi, articolotesi}. 

\subsection{Sea surface temperature anomalies.} This experiment
\cite{tesi, articolotesi} simulates the global map of temperature
anomalies of the sea surface produced by NASA's Aqua satellite in
January 2011, by sampling it in $N=6745$ locations. Since temperatures
typically have smooth variations, one may assume the anomaly function
$f^\dagger$ to be an element of $H^\omega(S^2)$ for some
$\omega>1$. The inversion problem here reduces to $F$ being the
identity operator, that is $b_m=1$ for every $m$
(as already noted, our
result holds true for any Fourier multiplier of the form \eqref{eq:2}).

 We may therefore set $\gamma=0$, and apply Theorem~\ref{main} with $\zeta=0$ to obtain
\begin{align*}
\|p_m^\beta-f^\dagger\|_2&\le \sqrt{1+\kappa}\|f^\dagger\|_{H^{\omega}(S^2)}\frac{1}{(\omega-1)^{\frac12}}\frac1{(1+m(m+1))^{\frac{\omega-1}2}}+\sqrt{\kappa}\beta.
\end{align*}

\subsection{ Wildfires and Deforestation.} This example deals with global density maps of trees and
wildfires across the globe for the year 2016. In both experiments, the data used in \cite{tesi, articolotesi} consists of tree and fire counts recorded by
NASA’s Aqua and Terra satellites. In \cite{tesi, articolotesi}, it is assumed that the spatial density $f^\dagger$ of trees (respectively fires) is a function in $H^\omega(S^2)$ with $\omega>1$, where we model the Earth surface by $S^2$. In both cases, the inverse problem consists in recovering $f^\dagger$ from a finite number of pointwise evaluations of $Ff^\dagger=h\ast f^\dagger$, where $h$ is the characteristic function of a small region around the North Pole. Here we propose to replace this choice of $h$ with the characteristic function of a spherical cap of angle $\vartheta_0\le\pi/2$, so that $h(x)=\chi_{[0,\vartheta_0]}(\rho(o,x))$ where $\rho(o,x)$ is the angular distance of $x$ from the North Pole. 
With this choice, by Eq.\ \eqref{bm} below and Rodrigues' formula (see \cite[(4.3.1)]{Sz}), if $m\geq 1$, 
\[b_m=\frac{1}{m}P^{1,1}_{m-1} (\cos \vartheta_0) \left(\sin \frac{\vartheta_0}{2}\right)^2\left(\cos \frac{\vartheta_0}{2}\right)^2.  \]

Let us now prove the upper bound \eqref{eq:4} with $\gamma=3/2$.
Kogbetliantz inequality \cite[eq. (1.3)]{forster} applied to our setting reads as
\[
\left|P_{m-1}^{1,1}(\cos\vartheta_0)(\sin\vartheta_0)^{\frac32}\right|
\le\frac{4\Gamma\left(m+\frac12\right)}{\Gamma\left(\frac 32\right)(m+1)\Gamma(m)}
\]
for all $\vartheta_0\le \pi/2$ and for all $m\ge1$. It follows that for $m\ge1$
\begin{align*}
|b_m|
&\le \left|\frac{(\sin\vartheta_0)^{\frac12}}{4m}P^{1,1}_{m-1} (\cos \vartheta_0) \left(\sin \vartheta_0\right)^{\frac 32}\right|\\
&\le \left|\frac{(\sin\vartheta_0)^{\frac12}}{m}\frac{\Gamma\left(m+\frac12\right)}{\Gamma\left(\frac 32\right)(m+1)\Gamma(m)}\right|\le 
\frac{3^{\frac34}}2\frac{(\sin\vartheta_0)^{\frac12}}{(1+m(m+1))^{\frac34}}.
\end{align*}
Since $b_0=(1-\cos\vartheta_0)/2\le (\sin\vartheta_0)^2/2\le 3^{3/4}(\sin\vartheta_0)^{1/2}/2$, we can deduce that indeed for all $m\ge0$,
\[
|b_m|\le\frac{3^{\frac34}}2\frac{(\sin\vartheta_0)^{\frac12}}{(1+m(m+1))^{\frac34}}.
\]

In Figure~\ref{fig:a} we illustrate this bound numerically.

We now move to studying the lower bound \eqref{below} with $\zeta=3/2$.
It can be proved (see \cite{bilyk} and also \cite{BGGM1}) that $x=\cos(2\pi/41)$ is a $3/2$-gegenbadly approximable number, which in our notation means that there exists a constant $c$ such that for every $m\geq 1$ 
\[|P^{1,1}_{m-1}(x)|\geq cm^{-\frac12}.\]
In other words, if we set $\vartheta_0=2\pi/41$, then for every $m\geq 1$, 
\[|b_m|\geq c_0\frac{1}{(1+m(m+1))^\frac34},\]
see also \cite{MR1992465, MR57963}. In particular, in \cite[section 6]{MR1992465},  Rubin proves the injectivity of $F$ from $H^\omega(S^2)$ to $H^{\omega+\frac32}(S^2)$ for $\vartheta=\pi/4$ and $\vartheta=\pi/3$, but  no bounds on the norm of the inverse are derived.
Numerical simulations (Figure~\ref{fig:b}) give an estimate $c_0\ge 0.4*10^{-3}$. (Similar remarks hold for other values of $\vartheta_0$).

\begin{figure}
     \centering
     \begin{subfigure}[b]{0.46\textwidth}
         \centering
         \includegraphics[width=\textwidth]{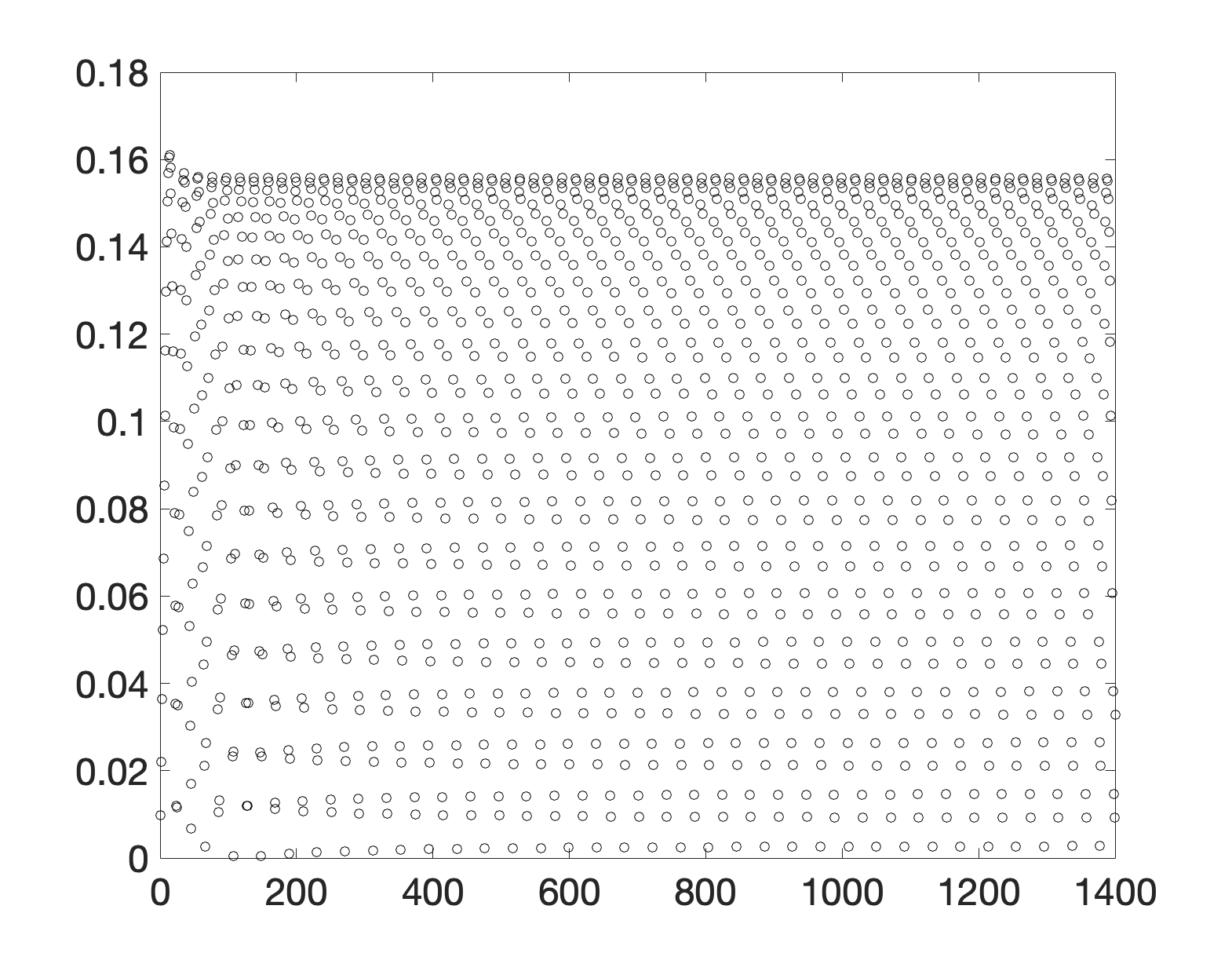}
         \caption{All values}
         \label{fig:a}
     \end{subfigure}
     \hspace{.5cm}
     \begin{subfigure}[b]{0.46\textwidth}
         \centering
         \includegraphics[width=\textwidth]{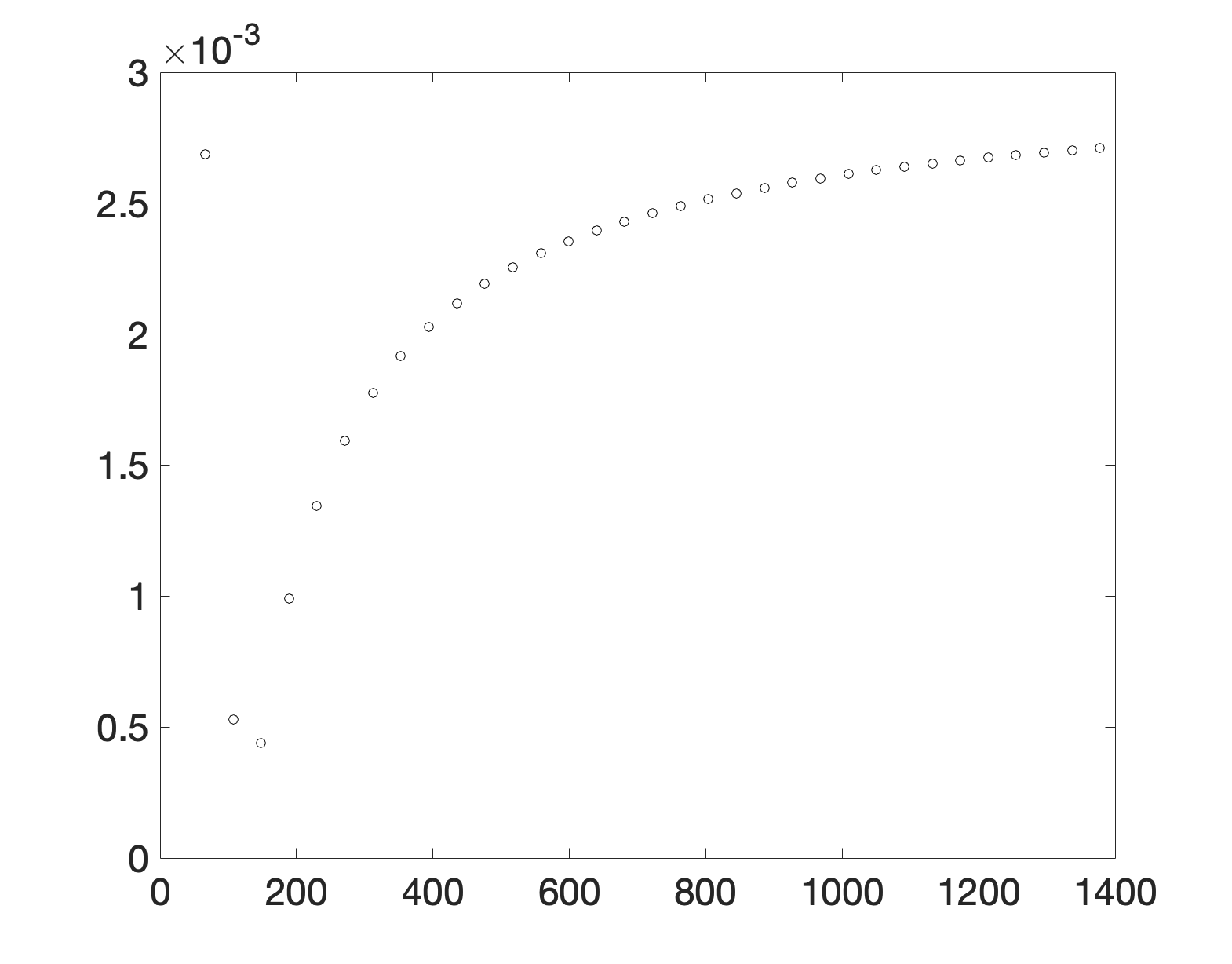}
         \caption{Zoom-in on the smallest values}
         \label{fig:b}
     \end{subfigure}
        \caption{Plot of the sequence $(1+m(m+1))^{3/4}|b_m|$ for $\vartheta_0=2\pi/41$ and  $m=1,\dots,1400$.}
        \label{fig:wildfires}
\end{figure}

Hence, by applying
  Theorem~\ref{main}  and Proposition~\ref{operator-1} with $\gamma=\zeta=3/2$, we obtain
\begin{align*}
\|f^\dagger-p_m^\beta\|_2&\leq c_0^{-1}\Vert Ff^\dagger - Fp_m^\beta\rVert_{H^{\frac32}(S^d)}\\
& \leq c_0^{-1}\sqrt{\frac{1
  +\kappa}{\omega-1}}\, \Vert Ff^\dagger
\rVert_{H^{\omega+\frac32}(S^2)}  (1+m(m+1))^{-\frac{\omega-1}2}\notag +c_0^{-1}
\sqrt{\kappa} \beta(1+m(m+1))^{\frac{3}4}\\
& \leq \frac{3^{\frac34}}2 (\sin\vartheta_0)^{\frac12} c_0^{-1}\sqrt{\frac{1
  +\kappa}{\omega-1}}\, \Vert f^\dagger
\rVert_{H^{\omega}(S^2)}  (1+m(m+1))^{-\frac{\omega-1}2}\notag +
c_0^{-1}\sqrt{\kappa} \beta(1+m(m+1))^{\frac{3}4}.
\end{align*}

\subsection{ Planck and the Cosmic Microwave Background} \label{sub:planck} Here, the aim is to  estimate the full sky intensity map (displaying the intensity or temperature of every astronomical radio source across the celestial sphere) from a point source sky model \cite{tesi}. The data consists of noisy directional samples of the map, convolved with a model of the point spread function of the radio telescope (the beamshape of the dish antenna). One wishes to estimate the intensity field $f^\dagger$ of the stars on the celestial sphere using samples obtained by steering the dish antenna towards $N$ directions. Similarly as before, we assume $f^\dagger$ to be an element of the Sobolev space $H^\omega(S^2)$ for some $\omega>1$. The proposed point spread function is 
\[
h(x) = g(\rho(x,o)),
\]
where
\begin{equation}\label{eq:g-planck}
g(\vartheta)=\left(\frac{\lambda_0J_1\left(4\pi R\sin\Big(\dfrac{\vartheta}2\Big)\right)}{2R\sin\Big(\dfrac{\vartheta}2\Big)}\right)^2,
\end{equation}
and $J_1$ is the Bessel function of the first kind and order 1, corresponding to the far-field beamshape of an ideal circular aperture with radius $R\,\, \text {m}$ operating at a wavelength $\lambda_0\,\, \text{m}$. As usual, $\rho(x,o)$ is the angular distance of $x$ from the north pole $o$. 

\begin{figure}
     \centering
     \begin{subfigure}[b]{0.45\textwidth}
         \centering
         \includegraphics[width=\textwidth]{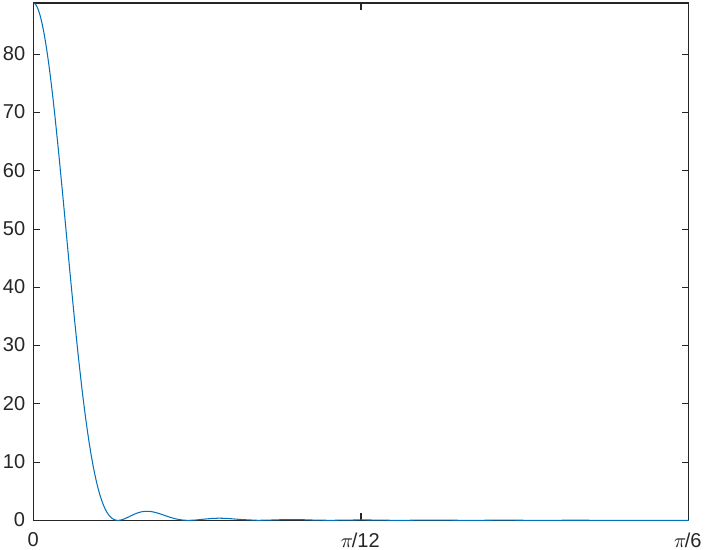}
         \caption{$\lambda_0=3$, $R=9$}
     \end{subfigure}
     \hfill
     \begin{subfigure}[b]{0.45\textwidth}
         \centering
         \includegraphics[width=\textwidth]{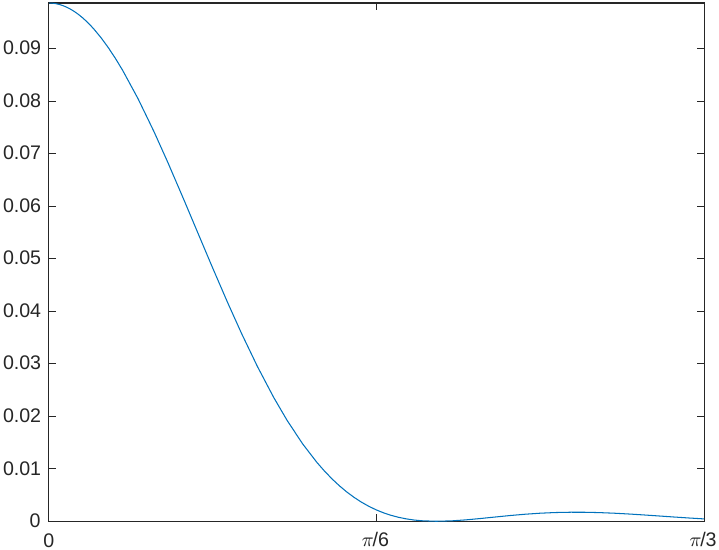}
         \caption{$\lambda_0=0.1$, $R=1$}
     \end{subfigure}
        \caption{The function $g$ in \eqref{eq:g-planck}, used to construct the convolution filter of the example from $\S$\ref{sub:planck}.}
        \label{fig:planck}
\end{figure}

In \cite{tesi} two possibilities are studied, one with $\lambda_0=3$ and $R=9$, where $N=768$ samples are used, and the second with $R=1$, $\lambda_0=0.1$ and $N=9248$ samples corresponding to data taken from the Planck radio telescope. See Figure~\ref{fig:planck} for an illustration of the function $g$ in these two cases.

Since the function $g$  can be extended to $\mathbb R$ as an even, $ C^\infty$, $2\pi$-periodic function,  $h$ belongs to all Sobolev spaces $H^{\sigma}(S^2)$, $\sigma\ge 0$. For the sake of simplicity, we study the case $\sigma=0$, so that for $m\ge1$ (see Remark~\ref{bmsfera}),
\[
|b_m|\le \frac{\|h\|_2}{(1+m(m+1))^{\frac14}}.
\]
We can apply  Theorem~\ref{main}
  with  $\gamma=1/2$ and, say, $\zeta=0$ to obtain
  \[
\|Fp_m^\beta-Ff^\dagger\|_2\le \sqrt{1+\kappa}\|h\|_2\|f^\dagger\|_{H^\omega(\mathcal B, \mathcal E)}\frac{1}{\Big(\omega-\dfrac12\Big)^{\frac12}}\frac1{(1+m(m+1))^{\frac{\omega-1/2}2}}+\sqrt{\kappa}\beta,
\]
where
\[
\|h\|_2=\left(\int_0^\pi|g(\vartheta)|^2\sin\Big(\frac\vartheta2\Big)\cos\Big(\frac\theta 2\Big)d\vartheta\right)^{\frac12}
=
\left(2\int_0^1\left|\frac{\lambda_0J_1(4\pi Rx)}{2Rx}\right|^4xdx\right)^{\frac12}.
\]
When $\lambda_0=3$ and $R=9$ one gets
$\|h\|_2\approx 1.064$, while when $\lambda_0=0.1$ and $R=1$, one gets $\|h\|_2\approx 0.0106$.

Unfortunately, a lower bound of the form \eqref{below} does not hold because $h$ is smooth and so $(b_m)_m$ has rapid decay; thus we cannot state an explicit bound for $\|p_m^\beta-f^\dagger\|_2$.

\subsection{ Lunar elemental abundance maps.} In this example, the aim is to build global distribution maps of radioactive elements on the surface of the Moon using real data collected by NASA's Lunar Prospector probe \cite{tesi}. These maps are used by scientists to retrace the Moon's geologic history. The emission of gamma rays at the surface of the Moon is modeled by an intensity function $f^\dagger$ belonging to some space $H^\omega(S^2)$, with $\omega>1$. The problem here consists in recovering $f^\dagger$ from a finite number of evaluations of $Ff^\dagger=h\ast f^\dagger$, where $h$ is the point spread function of the orbital gamma ray spectrometer, which is well fitted by
\[
h(x)=\left(1+\frac{R^2(\rho(x,o))^2}{2\sigma(t)^2}\right)^{-\iota(t)-1}
\]
where $R=1737.1\, \text{km}$ is the radius of the Moon, $t=30\,\text{km}$ is the altitude of the spacecraft, and
\[
\sigma(t)=0.704 t+1.39\, \text{km},\qquad \iota(t)=-4.87\, \text{km}^{-1}\times 10^{-4}t+0.631.
\]
As in $\S$\ref{sub:planck}, we use that $h\in L^2(S^2)$, so that
\[
|b_m|\le\frac{\|h\|_2}{(1+m(m+1))^{\frac14}}
\]
and we can apply Theorem~\ref{main} with $\gamma=1/2$ and, say, $\zeta=0$ to obtain
\[
\|Fp_m^\beta-Ff^\dagger\|_2\le \sqrt{1+\kappa}\|h\|_2\|f^\dagger\|_{H^\omega(S^2)}\frac{1}{\Big(\omega-\dfrac12\Big)^{\frac12}}\frac1{(1+m(m+1))^{\frac{\omega-1/2}2}}+\sqrt{\kappa}\beta,
\]
where
\[
\|h\|_2=\left(\int_0^\pi\left(1+\frac{R^2\vartheta^2}{2\sigma(t)^2}\right)^{-2\iota(t)-2}\sin\Big(\frac\vartheta2\Big)\cos\Big(\frac\vartheta 2\Big)d\vartheta\right)^{\frac12}
\approx0.0061.
\]
As in $\S$\ref{sub:planck}, we do not derive a lower bound of the form \eqref{below}.

\section{Sampling}\label{sec:sampling}

In this section, we revisit some fairly recent results regarding the approximation of smooth functions from a finite number of pointwise evaluations. This collection of results begins with the work of K. Gr\"ochenig \cite{Gro}, concerning the approximation of a function in a properly defined Sobolev space with a certain ``polynomial'' obtained by least squares from the samples of the function on the nodes of a Marcinkiewicz-Zygmund family.
One of the main features of this work is that the simplicity of the arguments allows for a great generality. For example, the ambient space is just a topological space with a probability measure, and the ``polynomials'' are simply the finite linear combinations of the functions of any orthonormal system in $L^2$.

Soon after, Gr\"ochenig's results were revisited in more explicit situations: for example, W. Lu and H. Wang \cite{LW} studied the case of the sphere, and J. Li, Y. Ling, J. Geng and H. Wang \cite{LLG} the case of a compact Riemannian manifold. In these cases, the orthonormal system is given by the eigenfunctions of the Laplace-Beltrami operator and the Sobolev spaces are defined as usual. In these situations, it is possible to improve Gr\"ochenig's results, obtaining asymptotically optimal estimates on
the decay of the approximation error.

\subsection{Sampling with Marcinkiewicz-Zygmund families}\label{sub:sampling_wit_MZ}

Let $\mathcal M$ be a space with a topology (usually a compact space) and let $\mu$ be  a probability measure on $\mathcal M$. Let $L^2(\mathcal M,\mu)$ denote the Hilbert space of (equivalence classes of) square-integrable functions, equipped with
\[
\langle f,g\rangle_2 = \int_{\mathcal M} f\,\overline g\,d\mu,\qquad \|f\|_2^2= \int_{\mathcal M} |f|^2\,d\mu.
\]
Futhermore, we assume that
\begin{enumerate}
\item[(i)] $\mathcal S=\{\psi_k:k\in\mathbb N\}$ is an orthonormal system in $L^2(\mathcal M,\mu)$,  such that each $\psi_k$ is continuous and bounded on $\mathcal M$, and $\psi_0\equiv 1$. We set $H(\mathcal S)=\overline{\mathrm{span}\{\psi_k:k\in\mathbb N\}}$.
\item[(ii)]$\mathcal L=\{\Lambda_k:\Lambda_k\ge0\}$ is a non-decreasing sequence with $\Lambda_k\to +\infty$.
\end{enumerate}
Let the Sobolev space associated to $\mathcal S$ and $\mathcal L$ be defined as
\[
H^\sigma( \mathcal S, \mathcal L)=\left\{f\in H( \mathcal S):\|f\|_{H^\sigma(\mathcal S,\mathcal L)}<+\infty\right\},\qquad \sigma\ge 0,
\]
where the norm is given by
\begin{equation}\label{eq:sobolev_norm}
\|f\|_{H^\sigma(\mathcal S,\mathcal L)}=\left(\sum_{k=0}^{+\infty}\left|\langle f,\psi_k\rangle_2\right|^2(1+\Lambda_k^2)^\sigma\right)^{\frac12}.
\end{equation}
Notice that $H^\sigma(\mathcal S,\mathcal L)$ is a Hilbert space, with the scalar product
\[
\langle f,g\rangle_{H^\sigma(\mathcal S,\mathcal L)}=\sum_{k=0}^{+\infty}\langle f,\psi_k\rangle_2\overline{\langle g,\psi_k\rangle_2}(1+\Lambda_k^2)^\sigma.
\]
Let $\mathcal Q_r(\mathcal S,\mathcal L)$ be the set of ``polynomials'' of degree $r$ on $\mathcal M$, that is
\[
\mathcal Q_r(\mathcal S,\mathcal L)=\operatorname{span}\{\psi_k:\Lambda_k\le r\}\subseteq L^2(\mathcal M).
\]

\begin{definition}
A doubly-indexed set $\{x_{m,j}:m\in\mathbb N,\,j=1,\ldots, L_m\}\subseteq{\mathcal M}$,
 a set of positive weights $\{\tau_{m,j}:m\in\mathbb N,\,j=1,\ldots, L_m\}$, and a non-negative, increasing sequence $\{r_m\}$ of ``degrees'' diverging to $+\infty$, such that 
\begin{equation}\label{MZ1}
A\|q\|_2^2\le\sum_{j=1}^{L_m}|q(x_{m,j})|^2\tau_{m,j}\le B\|q\|_2^2\qquad \text{ for all } q\in\mathcal Q_{r_m}(\mathcal S,\mathcal L),
\end{equation}
with constants $A,\,B>0$ independent of $m$, is called a Marcinkiewicz-Zygmund family $\mathcal X$ 
for $\mathcal S$ and $\mathcal L$. We set $\kappa=B/A$.    
\end{definition}

We define the series
\[
C_\sigma(\mathcal S,\mathcal L)=\sup_{x\in\mathcal M}\left(\sum_{k=0}^{+\infty}|\psi_k(x)|^2(1+\Lambda_k^2)^{-\sigma}\right)^{\frac12}
\]
and the 
remainders
\[
\phi_\sigma(r, \mathcal S, \mathcal L)=\sup_{x\in\mathcal M}\left(\sum_{k:\Lambda_k>r}|\psi_k(x)|^2(1+\Lambda_k^2)^{-\sigma}\right)^{\frac12}.
\]
When $C_\sigma(\mathcal S,\mathcal L)<+\infty$, then $\phi_\sigma(r,\mathcal S,\mathcal L)<+\infty$ too. If $\phi_\sigma(r, \mathcal S, \mathcal L)\to0$ as $r\to+\infty$, then $C_\sigma(\mathcal S,\mathcal L)<+\infty$. 

\begin{definition}
Let $\mathcal X$ be a Marcinkiewicz-Zygmund family 
for $\mathcal S$ and $\mathcal L$, with nodes $\{x_{m,j}\}$, weights $\{\tau_{m,j}\}$ and degrees $\{r_m\}$. For any finite sequence of complex numbers $y_m=\{y_{m,j}\}_{j=1}^{L_m}$,  we set
\begin{equation}\label{eq:T_dagger}
T^\dagger y_m=\operatornamewithlimits{argmin}_{q\in\mathcal Q_{r_m}(\mathcal S,\mathcal L)}\,\sum_{j=1}^{L_m}|y_{m,j}-q(x_{m,j})|^2\,\tau_{m,j}.
\end{equation}
\end{definition}

The following result shows that this minimization problem has indeed a unique minimizer, and provides a useful bound on its norm.

\begin{lemma}\label{dagger}
For any finite sequence of complex numbers $y_m=\{y_{m,j}\}_{j=1}^{L_m}$,  problem \eqref{eq:T_dagger} admits a unique minimizer  $T^\dagger y_m\in \mathcal Q_{r_m}(\mathcal S,\mathcal L)$, and
\[
\|T^\dagger y_m\|_2\le A^{-\frac12}\left(\sum_{j=1}^{L_m}|y_{m,j}|^2\tau_{m,j}\right)^{\frac12}.
\]
\end{lemma}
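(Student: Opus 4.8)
The plan is to recognize the minimization problem \eqref{eq:T_dagger} as a least-squares problem on the finite-dimensional space $\mathcal Q_{r_m}(\mathcal S,\mathcal L)$ and to exploit the lower Marcinkiewicz-Zygmund bound in \eqref{MZ1}. First I would introduce the discrete semi-inner product on $\mathcal Q_{r_m}(\mathcal S,\mathcal L)$ given by
\[
\langle p,q\rangle_m=\sum_{j=1}^{L_m}p(x_{m,j})\,\overline{q(x_{m,j})}\,\tau_{m,j},
\]
and observe that the left inequality $A\|q\|_2^2\le\langle q,q\rangle_m$ forces this form to be positive definite on $\mathcal Q_{r_m}(\mathcal S,\mathcal L)$: if $\langle q,q\rangle_m=0$ then $\|q\|_2=0$, hence $q=0$. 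Thus $\langle\cdot,\cdot\rangle_m$ is a genuine inner product on the finite-dimensional space $\mathcal Q_{r_m}(\mathcal S,\mathcal L)$, and the functional to be minimized is nothing but the squared distance, in the norm induced by $\langle\cdot,\cdot\rangle_m$, from the data vector to the subspace obtained by evaluating polynomials at the nodes.

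Existence and uniqueness of the minimizer then follow from the standard Hilbert-space projection theorem applied in the finite-dimensional setting. Concretely, I would set up the sampling map $E\colon\mathcal Q_{r_m}(\mathcal S,\mathcal L)\to\mathbb C^{L_m}$, $Eq=\{q(x_{m,j})\}_j$, and equip $\mathbb C^{L_m}$ with the weighted inner product with weights $\tau_{m,j}$. By the lower bound, $E$ is injective, so its range is a genuine subspace, and the problem \eqref{eq:T_dagger} is the orthogonal projection of $y_m$ onto $\mathrm{range}(E)$; the unique minimizer $q^\star=T^\dagger y_m$ is characterized by the normal equations, namely $Eq^\star$ is the projection of $y_m$ and $\langle Eq^\star-y_m,Ep\rangle_{\text{weighted}}=0$ for all $p\in\mathcal Q_{r_m}(\mathcal S,\mathcal L)$.

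For the norm bound, the key idea is to test the normal equations against $q^\star$ itself. Writing $P$ for the weighted orthogonal projection onto $\mathrm{range}(E)$, one has $Eq^\star=Py_m$, and since projections do not increase the weighted norm,
\[
\langle q^\star,q^\star\rangle_m=\sum_{j=1}^{L_m}|q^\star(x_{m,j})|^2\tau_{m,j}=\|Py_m\|_{\text{weighted}}^2\le\|y_m\|_{\text{weighted}}^2=\sum_{j=1}^{L_m}|y_{m,j}|^2\tau_{m,j}.
\]
Combining this with the lower Marcinkiewicz-Zygmund inequality $A\|q^\star\|_2^2\le\langle q^\star,q^\star\rangle_m$ yields exactly
\[
\|T^\dagger y_m\|_2^2\le A^{-1}\sum_{j=1}^{L_m}|y_{m,j}|^2\tau_{m,j},
\]
which is the claimed estimate after taking square roots. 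I do not expect a serious obstacle here: the entire argument is a finite-dimensional projection computation, and the only structural input is the lower half of \eqref{MZ1}, which simultaneously guarantees that the discrete form is definite (giving uniqueness) and converts the contractivity of the projection into the stated $L^2$ bound. The one point to handle with minor care is the use of weighted inner products on $\mathbb C^{L_m}$ so that the contraction property of the orthogonal projection applies verbatim; once that framing is in place the proof is immediate.
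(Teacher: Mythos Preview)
Your proposal is correct and follows essentially the same approach as the paper's proof: both introduce the weighted inner product on $\mathbb C^{L_m}$, use the lower Marcinkiewicz--Zygmund inequality to deduce that the sampling map is injective, identify the minimizer with the orthogonal projection of $y_m$ onto the range of this map, and combine the contractivity of the projection with the lower bound $A\|q\|_2^2\le\|Eq\|_{\text{weighted}}^2$ to obtain the estimate. The paper phrases the last step as an operator-norm bound on the pseudoinverse $T^\dagger=S\Pi_{\mathcal W}$, while you argue via $\|Eq^\star\|_{\text{weighted}}=\|Py_m\|_{\text{weighted}}\le\|y_m\|_{\text{weighted}}$, but these are the same computation.
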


\begin{proof}
    Let us equip $\mathbb C^{L_m}$ with the weighted norm
    \[
    \|a\|_{\mathbb C^{L_m}}^2:=\sum_{j=1}^{L_m}|a_j|^2\tau_{m,j}.
    \]
    Let  $T\colon\mathcal Q_{r_m}(\mathcal S,\mathcal L)\to\mathbb C^{L_m}$ be the sampling operator defined by
    \(
    Tq=\left\{q(x_{m,j})\right\}_j.
    \)
    By \eqref{MZ1},
\begin{equation}
    A^{\frac12}\|q\|_2\le\|Tq\|_{\mathbb C^{L_m}}\le B^{\frac12}\|q\|_2,\label{bound}
\end{equation}
and $T$ is therefore injective. Let $\mathcal W=\operatorname{Im}T$ and $\Pi_{\mathcal W}$ be the orthogonal projection from $\mathbb C^{L_m}$ onto ${\mathcal W}$, then $T$ is an invertible operator from $\mathcal Q_{r_m}(\mathcal S,\mathcal L)$ onto $ \mathcal W$ with inverse $S\colon \mathcal W \to \mathcal Q_{r_m}(\mathcal S,\mathcal L)$. The bound \eqref{bound} implies that 
\[A^{\frac{1}{2}}\|S y\|_{2}\leq \|y\|_{\mathbb C^{L_m}} \qquad \forall y\in \mathcal W,\]
so that $\|S\|\leq A^{-\frac{1}{2}}$.
Furthermore, since
\[
\operatornamewithlimits{argmin}_{q\in \mathcal Q_{r_m}(\mathcal S,\mathcal L)}\|Tq- y_m\|^2_{\mathbb C^{L_m}}= \operatornamewithlimits{argmin}_{q\in \mathcal Q_{r_m}(\mathcal S,\mathcal L)} \|Tq- \Pi_{\mathcal W}y_m\|^2_{\mathbb C^{L_m}}= S\Pi_{\mathcal W}y_m,
\]
it follows that  the pseudoinverse of $T$ (see \cite{Penrose}) is $T^\dagger=S \Pi_{\mathcal W}$ and $\|T^\dagger\|\le \|S\| \|\Pi_{\mathcal W}\|\leq A^{-\frac{1}{2}}$.

\end{proof}

The following theorem holds (see 
 \cite[Theorem 3.3]{Gro}). It concerns the reconstruction of a smooth function $f^\dagger\in H^\sigma(\mathcal S, \mathcal L)$ from its pointwise samples $\left\{f^\dagger(x_{m,j})\right\}_j$.

\begin{Theorem}\label{KHG}
Let $\mathcal X$  a Marcinkiewicz-Zygmund family 
for $\mathcal S$ and $\mathcal L$, with nodes $\{x_{m,j}\}$, weights $\{\tau_{m,j}\}$ and degrees $\{r_m\}$. 
Let $f^\dagger\in H^\zeta(\mathcal S,\mathcal L)$ be continuous for some $\zeta\ge0$, and define
\[
q_m=T^\dagger\{f^\dagger(x_{m,j})\}=\operatornamewithlimits{argmin}_{q\in\mathcal Q_{r_m}(\mathcal S,\mathcal L)}\sum_{j=1}^{L_m}|f^\dagger(x_{m,j})-q(x_{m,j})|^2\tau_{m,j}.
\] Then for every $\sigma\ge \zeta$ and $m\in\mathbb N$
\[
\|f^\dagger-q_m\|_{H^\zeta(\mathcal S, \mathcal L)}\le \sqrt{1+\kappa}\|f^\dagger\|_{H^\sigma(\mathcal S, \mathcal L)}\phi_{\sigma-\zeta}(r_m, \mathcal S, \mathcal L).
\]
\end{Theorem}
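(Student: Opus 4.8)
The plan is to compare $f^\dagger$ with its $L^2$-orthogonal projection $Pf^\dagger$ onto $\mathcal Q_{r_m}(\mathcal S,\mathcal L)$ and to exploit the linearity of $T^\dagger$ established in Lemma~\ref{dagger}. Write $r=f^\dagger-Pf^\dagger=\sum_{k:\Lambda_k>r_m}\langle f^\dagger,\psi_k\rangle_2\,\psi_k$ for the high-frequency tail; if $\|f^\dagger\|_{H^\sigma(\mathcal S,\mathcal L)}=+\infty$ or $\phi_{\sigma-\zeta}(r_m,\mathcal S,\mathcal L)=+\infty$ the asserted bound is trivial, so I may assume both finite, which (together with continuity of $f^\dagger$) also guarantees that the defining series converge pointwise. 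Since $Pf^\dagger\in\mathcal Q_{r_m}(\mathcal S,\mathcal L)$, the sampling operator $T$ reproduces it, and because $T^\dagger T=\mathrm{id}$ on $\mathcal Q_{r_m}(\mathcal S,\mathcal L)$ (the operator $S$ of Lemma~\ref{dagger} inverts $T$ on its image), linearity of $T^\dagger$ gives the key identity
\[
q_m-Pf^\dagger=T^\dagger\{f^\dagger(x_{m,j})\}-T^\dagger\{(Pf^\dagger)(x_{m,j})\}=T^\dagger\{r(x_{m,j})\}.
\]
Hence $f^\dagger-q_m=r-T^\dagger\{r(x_{m,j})\}$ is a sum of a tail supported on $\{k:\Lambda_k>r_m\}$ and an element of $\mathcal Q_{r_m}(\mathcal S,\mathcal L)$ supported on $\{k:\Lambda_k\le r_m\}$; these are orthogonal in every $H^\zeta$ inner product, so Pythagoras yields
\[
\|f^\dagger-q_m\|_{H^\zeta(\mathcal S,\mathcal L)}^2=\|r\|_{H^\zeta(\mathcal S,\mathcal L)}^2+\|T^\dagger\{r(x_{m,j})\}\|_{H^\zeta(\mathcal S,\mathcal L)}^2,
\]
and it remains to bound the two summands by $\phi_{\sigma-\zeta}(r_m,\mathcal S,\mathcal L)^2\|f^\dagger\|_{H^\sigma(\mathcal S,\mathcal L)}^2$ and $\kappa$ times that, respectively.

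For the tail, I would observe that for every index with $\Lambda_k>r_m$ one has $(1+\Lambda_k^2)^{-(\sigma-\zeta)}\le\phi_{\sigma-\zeta}(r_m,\mathcal S,\mathcal L)^2$: integrating the definition of $\phi$ over the probability space $\mathcal M$ and using $\|\psi_k\|_2=1$ shows $\phi_{\sigma-\zeta}(r_m,\mathcal S,\mathcal L)^2\ge\sum_{j:\Lambda_j>r_m}(1+\Lambda_j^2)^{-(\sigma-\zeta)}$, which dominates any single term. Writing $(1+\Lambda_k^2)^\zeta=(1+\Lambda_k^2)^\sigma(1+\Lambda_k^2)^{-(\sigma-\zeta)}$ and summing against $|\langle f^\dagger,\psi_k\rangle_2|^2$ then gives $\|r\|_{H^\zeta(\mathcal S,\mathcal L)}\le\phi_{\sigma-\zeta}(r_m,\mathcal S,\mathcal L)\|f^\dagger\|_{H^\sigma(\mathcal S,\mathcal L)}$.

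For the polynomial term, since $T^\dagger\{r(x_{m,j})\}\in\mathcal Q_{r_m}(\mathcal S,\mathcal L)$ I would use the crude comparison $\|q\|_{H^\zeta(\mathcal S,\mathcal L)}\le(1+r_m^2)^{\zeta/2}\|q\|_2$ valid on $\mathcal Q_{r_m}$, followed by the norm bound of Lemma~\ref{dagger},
\[
\|T^\dagger\{r(x_{m,j})\}\|_2\le A^{-1/2}\Big(\sum_{j=1}^{L_m}|r(x_{m,j})|^2\tau_{m,j}\Big)^{1/2}.
\]
A Cauchy--Schwarz split of $r(x_{m,j})=\sum_{k:\Lambda_k>r_m}\langle f^\dagger,\psi_k\rangle_2\,\psi_k(x_{m,j})$ against the weights $(1+\Lambda_k^2)^{\pm\sigma}$ bounds each $|r(x_{m,j})|^2$ by $\|f^\dagger\|_{H^\sigma(\mathcal S,\mathcal L)}^2\,\phi_\sigma(r_m,\mathcal S,\mathcal L)^2$, while testing the Marcinkiewicz--Zygmund inequality \eqref{MZ1} on the admissible polynomial $\psi_0\equiv1$ gives $\sum_j\tau_{m,j}\le B$. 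Collecting these, using $A^{-1}B=\kappa$, and invoking the elementary inequality $(1+r_m^2)^{\zeta/2}\phi_\sigma(r_m,\mathcal S,\mathcal L)\le\phi_{\sigma-\zeta}(r_m,\mathcal S,\mathcal L)$ (which holds termwise since $(1+r_m^2)^\zeta(1+\Lambda_k^2)^{-\sigma}\le(1+\Lambda_k^2)^{-(\sigma-\zeta)}$ when $\Lambda_k>r_m$) yields $\|T^\dagger\{r(x_{m,j})\}\|_{H^\zeta(\mathcal S,\mathcal L)}\le\sqrt\kappa\,\phi_{\sigma-\zeta}(r_m,\mathcal S,\mathcal L)\|f^\dagger\|_{H^\sigma(\mathcal S,\mathcal L)}$. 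Substituting both estimates into the Pythagorean identity and taking square roots produces the factor $\sqrt{1+\kappa}$ and closes the argument.

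The step I expect to be the crux is the reproduction identity $q_m-Pf^\dagger=T^\dagger\{r(x_{m,j})\}$, which converts the analysis of the least-squares minimizer into the study of how $T^\dagger$ acts on the pure tail $r$; once this is in place, the two remainder estimates are routine, resting only on the two monotonicity inequalities that transform powers of $(1+r_m^2)$ and individual frequencies into the remainder $\phi$. The one point demanding care is the pointwise convergence of the expansion of $r$ at the nodes $x_{m,j}$, which is exactly why continuity of $f^\dagger$ (and finiteness of $\phi_\sigma$) is assumed.
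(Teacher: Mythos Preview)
Your proof is correct and follows essentially the same route as the paper's: the Pythagorean split $f^\dagger-q_m=r-T^\dagger\{r(x_{m,j})\}$, the bound on the polynomial piece via Lemma~\ref{dagger} together with $\sum_j\tau_{m,j}\le B$, and the reduction $(1+r_m^2)^{\zeta/2}\phi_\sigma\le\phi_{\sigma-\zeta}$ are exactly what the paper does. The only minor variation is your estimate of the tail $\|r\|_{H^\zeta}$, where you bound each factor $(1+\Lambda_k^2)^{-(\sigma-\zeta)}$ by $\phi_{\sigma-\zeta}(r_m)^2$ via integrating the defining sup, while the paper instead passes through $\|\cdot\|_2\le\|\cdot\|_\infty$ and applies Cauchy--Schwarz pointwise; both yield the same bound.
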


For the sake of completeness, and given that this version is slightly more general and with a somewhat refined constant than the original result in \cite{Gro}, we report here the elementary proof.

\begin{proof}
Let us call $P_{r_m}$ the orthogonal projection of $H(\mathcal S)$ onto $Q_{r_m}(\mathcal S,\mathcal L)$. Given that if $f^\dagger\in H^\zeta(\mathcal S, \mathcal L)$, then $f^\dagger-P_{r_m}f^\dagger$ remains orthogonal to $Q_{r_m}(\mathcal S,\mathcal L)$ in $H^\zeta(\mathcal S,\mathcal L)$, 
\[
\|f^\dagger-q_m\|^2_{H^\zeta(\mathcal S,\mathcal L)}=
\|f^\dagger-P_{r_m}f\|_{H^\zeta(\mathcal S,\mathcal L)}^2+\|P_{r_m}f^\dagger-q_m\|^2_{H^\zeta(\mathcal S,\mathcal L)}.
\]
Now, 
\begin{align*}
\|f^\dagger-P_{r_m}f^\dagger\|_{H^\zeta(\mathcal S,\mathcal L)}&=\left\|\sum_{\Lambda_k>r_m}\langle f^\dagger,\psi_k\rangle_2\psi_k\right\|_{H^\zeta(\mathcal S,\mathcal L)}
=\left\|\sum_{\Lambda_k>r_m}\langle f^\dagger,\psi_k\rangle_2(1+\Lambda_k^2)^{\frac\zeta2}\psi_k\right\|_{2}
\\
&\leq\left\|\sum_{\Lambda_k>r_m}\langle f^\dagger,\psi_k\rangle_2(1+\Lambda_k^2)^{\frac\sigma2}(1+\Lambda_k^2)^{-\frac{\sigma-\zeta}2}\psi_k\right\|_{\infty}\le \|f^\dagger\|_{H^\sigma(\mathcal S, \mathcal L)}\phi_{\sigma-\zeta}(r_m, \mathcal S, \mathcal L).
\end{align*}
On the other hand, using the notation of the proof of Lemma~\ref{dagger}, since $P_{r_m}f^\dagger\in Q_{r_m}(\mathcal S,\mathcal L)$,
\begin{align*}
\|P_{r_m}f^\dagger-q_m\|_{H^\zeta(\mathcal S,\mathcal L)}&\le
(1+r_m^2)^{\frac\zeta2}\|P_{r_m}f^\dagger-q_m\|_{2}\\
&=(1+r_m^2)^{\frac\zeta2}\|T^{\dagger}\{P_{r_m}f^\dagger(x_{m,j})\} -T^\dagger\{f^\dagger(x_{m,j})\}\|_{2}\\
&\le A^{-\frac12}(1+r_m^2)^{\frac\zeta2}\left(\sum_{j=1}^{L_m}|P_{r_m}f^\dagger(x_{m,j}) -f^\dagger(x_{m,j})|^2\tau_{m,j}\right)^{\frac12}\\
&\le B^{\frac12}A^{-\frac12}(1+r_m^2)^{\frac\zeta2}\sup_{x}|P_{r_m}f^\dagger(x) -f^\dagger(x)|\\
&= B^{\frac12}A^{-\frac12}(1+r_m^2)^{\frac\zeta2}\sup_{x}\left|\sum_{\Lambda_k>r_m}\langle f^\dagger,\psi_k\rangle_2\psi_k(x)\right|\\
&\le B^{\frac12}A^{-\frac12}(1+r_m^2)^{\frac\zeta2}\|f^\dagger\|_{H^\sigma(\mathcal S, \mathcal L)}\phi_{\sigma}(r_m, \mathcal S, \mathcal L)\\
&\le B^{\frac12}A^{-\frac12}\|f^\dagger\|_{H^\sigma(\mathcal S, \mathcal L)}\phi_{\sigma-\zeta}(r_m, \mathcal S, \mathcal L),
\end{align*}
where we have used that $\sum_{j=1}^{L_m}\tau_{m,j}\le B$
(simply set $q\equiv 1$ in \eqref{MZ1}).
\end{proof}

Notice that normally $\sum_{j=1}^{L_m}\tau_{m,j}=1\le B$, in which case   the factor $\kappa$ could be replaced by $A^{-\frac12}$.

 In the original theorem in \cite{Gro}, $\zeta$ is set equal to $0$ and $\mathcal S$ is assumed to be a complete orthonormal system, so that $H( \mathcal S)=L^2(\mathcal M)$. Also, in that paper, $r_m$ is set to be equal to $m$.

In a real-life context, evaluations $f^\dagger(x_{m,j})$ may be subject to certain measurement errors. The following result takes this possibility into account.
\begin{Corollary}\label{noise}
Let $\mathcal X$ be a Marcinkiewicz-Zygmund family for $\mathcal S$ and $\mathcal L$, with nodes $\{x_{m,j}\}$, weights $\{\tau_{m,j}\}$ and degrees $\{r_m\}$. 
Let $f^\dagger$ be a continuous function in $H^\zeta(\mathcal S,\mathcal L)$, $\zeta\ge0$, and let $\{y_{m,j}\}$ be the corresponding
family of noisy samples such that
\[
 |y_{m,j}- f^\dagger(x_{m,j})|\leq\beta, \qquad j=1,\ldots,L_m,
\]
where $\beta \ge 0$ is the noise level. Set 
\[
q_m^\beta=T^\dagger\{y_{m,j}\}=\operatornamewithlimits{argmin}_{q\in\mathcal Q_{r_m}(\mathcal S,\mathcal L)}\sum_{j=1}^{L_m}|y_{m,j}-q(x_{m,j})|^2\tau_{m,j}.
\]
Then for every $\sigma\ge \zeta$ and $m\in\mathbb N$
\[
\|f^\dagger-q_m^\beta\|_{H^\zeta(\mathcal S,\mathcal L)}\le \sqrt{1+\kappa}\|f^\dagger\|_{H^\sigma(\mathcal S, \mathcal L)}\phi_{\sigma-\zeta}(r_m, \mathcal S, \mathcal L)+\sqrt{\kappa}\beta(1+r_m^2)^{\frac\zeta2}.
\]
\end{Corollary}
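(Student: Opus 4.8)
The plan is to decompose the total error into a noise-free part, which is already controlled by Theorem~\ref{KHG}, and a perturbation part that isolates the effect of the noise. Concretely, let $q_m=T^\dagger\{f^\dagger(x_{m,j})\}$ denote the least-squares reconstruction from the \emph{exact} samples, so that Theorem~\ref{KHG} applies verbatim to $q_m$. By the triangle inequality in $H^\zeta(\mathcal S,\mathcal L)$,
\[
\|f^\dagger-q_m^\beta\|_{H^\zeta(\mathcal S,\mathcal L)}\le \|f^\dagger-q_m\|_{H^\zeta(\mathcal S,\mathcal L)}+\|q_m-q_m^\beta\|_{H^\zeta(\mathcal S,\mathcal L)},
\]
where the first summand is bounded by $\sqrt{1+\kappa}\,\|f^\dagger\|_{H^\sigma(\mathcal S,\mathcal L)}\phi_{\sigma-\zeta}(r_m,\mathcal S,\mathcal L)$ thanks to Theorem~\ref{KHG}. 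It remains to estimate the second summand, which carries the entire noise contribution.

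First I would exploit the linearity of the pseudoinverse. Since $q_m=T^\dagger\{f^\dagger(x_{m,j})\}$ and $q_m^\beta=T^\dagger\{y_{m,j}\}$, and $T^\dagger=S\Pi_{\mathcal W}$ is linear (as constructed in the proof of Lemma~\ref{dagger}), the difference equals $q_m-q_m^\beta=T^\dagger\{f^\dagger(x_{m,j})-y_{m,j}\}$. Applying the norm bound of Lemma~\ref{dagger} to the data sequence $\{f^\dagger(x_{m,j})-y_{m,j}\}$ gives
\[
\|q_m-q_m^\beta\|_2\le A^{-\frac12}\left(\sum_{j=1}^{L_m}|f^\dagger(x_{m,j})-y_{m,j}|^2\tau_{m,j}\right)^{\frac12}.
\]
Using the pointwise bound $|f^\dagger(x_{m,j})-y_{m,j}|\le\beta$ together with $\sum_{j=1}^{L_m}\tau_{m,j}\le B$ (obtained by setting $q\equiv 1$ in \eqref{MZ1}, since $\psi_0\equiv 1$ and $\|1\|_2=1$ because $\mu$ is a probability measure), this yields $\|q_m-q_m^\beta\|_2\le A^{-1/2}B^{1/2}\beta=\sqrt{\kappa}\,\beta$.

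Finally I would pass from the $L^2$-norm to the $H^\zeta$-norm on the finite-dimensional polynomial space. Since $q_m-q_m^\beta\in\mathcal Q_{r_m}(\mathcal S,\mathcal L)$, writing $q_m-q_m^\beta=\sum_{\Lambda_k\le r_m}c_k\psi_k$ and using the monotonicity $(1+\Lambda_k^2)^\zeta\le(1+r_m^2)^\zeta$ valid on the range $\Lambda_k\le r_m$ gives the elementary polynomial inequality $\|q_m-q_m^\beta\|_{H^\zeta(\mathcal S,\mathcal L)}\le(1+r_m^2)^{\zeta/2}\|q_m-q_m^\beta\|_2$. Combining the three displays produces the second term $\sqrt{\kappa}\,\beta(1+r_m^2)^{\zeta/2}$ and hence the asserted bound. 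No step presents a genuine obstacle; the only point that requires care is invoking the linearity of $T^\dagger$ so that the noise sequence alone drives the perturbation term, after which Lemma~\ref{dagger} and the polynomial norm comparison close the argument.
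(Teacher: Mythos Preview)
Your proof is correct and follows essentially the same approach as the paper: decompose via the triangle inequality into the noise-free term (handled by Theorem~\ref{KHG}) and the perturbation term, then control the latter by linearity of $T^\dagger$, Lemma~\ref{dagger}, the bound $\sum_j\tau_{m,j}\le B$, and the polynomial inequality $\|\cdot\|_{H^\zeta}\le(1+r_m^2)^{\zeta/2}\|\cdot\|_2$ on $\mathcal Q_{r_m}$.
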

 \begin{proof}
By Lemma~\ref{dagger} and Theorem~\ref{KHG} we have
\begin{align*}
&\|f^\dagger-T^\dagger \{y_{m,j}\}\|_{H^\zeta(\mathcal S,\mathcal L)}\\
&\le \|f^\dagger-T^\dagger \{f^\dagger(x_{m,j})\}\|_{H^\zeta(\mathcal S,\mathcal L)}+\|T^\dagger \{f^\dagger(x_{m,j})\}-T^\dagger \{y_{m,j}\}\|_{H^\zeta(\mathcal S,\mathcal L)}\\
&\le \|f^\dagger-T^\dagger \{f^\dagger(x_{m,j})\}\|_{H^\zeta(\mathcal S,\mathcal L)}+(1+r_m^2)^{\frac\zeta2}\|T^\dagger \{f^\dagger(x_{m,j})\}-T^\dagger \{y_{m,j}\}\|_{2}\\
&\le \sqrt{1+\kappa}\|f^\dagger\|_{H^\sigma(\mathcal S, \mathcal L)}\phi_{\sigma-\zeta}(r_m, \mathcal S, \mathcal L)+A^{-\frac12}(1+r_m^2)^{\frac \zeta2}\left(\sum_{k=1}^{L_m}|f^\dagger(x_{m,j})-y_{m,j}|^2\tau_{m,j}\right)^{\frac12}\\
&\le \sqrt{1+\kappa}\|f^\dagger\|_{H^\sigma(\mathcal S, \mathcal L)}\phi_\sigma(r_m, \mathcal S, \mathcal L)+B^{\frac12}A^{-\frac12}\beta(1+r_m^2)^{\frac\zeta2},
\end{align*}
where we have used that $\sum_{j=1}^{L_m}\tau_{m,j}\le B$.
\end{proof}

\medskip

The following result gives a lower estimate for the function $\phi_\sigma(r_m,\mathcal S,\mathcal L)$.

\begin{Proposition}
\label{lower}
Assume that for a doubly indexed collection of nodes $\{x_{m,j}\}_{m=1,j=1}^{+\infty,L_m}$, for corresponding weights $\{\tau_{m,j}\}$, for an increasing sequence of degrees $\{r_m\}_{m=1}^{+\infty}$ diverging to $\infty$, and for $\sigma\ge0$ and $\zeta\geq0$,
there exists a function $\Psi_{\sigma,\zeta}(r_m)$ such that
for every polynomial $f$ (finite linear combination of the $\psi_k$'s), if we define
\[
q_m=\operatornamewithlimits{argmin}_{q\in\mathcal Q_{r_m}(\mathcal S,\mathcal L)}\sum_{j=1}^{L_m}|f(x_{m,j})-q(x_{m,j})|^2\tau_{m,j},
\] there holds
\[
\|f-q_m\|_{H^\zeta(\mathcal S, \mathcal L)}\le \|f\|_{H^\sigma(\mathcal S, \mathcal L)}\Psi_{\sigma,\zeta}(r_m).
\]
Then, if we let $k_m$  be the smallest value  $k$ for which $\Lambda_k>r_m$, we have
\[
\Psi_{\sigma,\zeta}(r_m)\ge(1+\Lambda_{k_m}^2)^{-\frac{\sigma-\zeta}{2}}.
\]
\end{Proposition}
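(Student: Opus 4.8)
The plan is to prove this as a simple \emph{extremal} lower bound: since the assumed inequality $\|f-q_m\|_{H^\zeta(\mathcal S,\mathcal L)}\le\|f\|_{H^\sigma(\mathcal S,\mathcal L)}\Psi_{\sigma,\zeta}(r_m)$ is postulated to hold for \emph{every} polynomial $f$, any single choice of $f$ yields
\[
\Psi_{\sigma,\zeta}(r_m)\ge\frac{\|f-q_m\|_{H^\zeta(\mathcal S,\mathcal L)}}{\|f\|_{H^\sigma(\mathcal S,\mathcal L)}},
\]
so the entire task reduces to picking $f$ well and bounding this quotient from below. I would take $f=\psi_{k_m}$, the first basis element whose index is excluded from $\mathcal Q_{r_m}(\mathcal S,\mathcal L)$. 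This is well defined because $\Lambda_k\to+\infty$, and by minimality of $k_m$ together with monotonicity of $\{\Lambda_k\}$ one has $\Lambda_k\le r_m<\Lambda_{k_m}$ for all $k<k_m$, so that $\psi_{k_m}\notin\mathcal Q_{r_m}(\mathcal S,\mathcal L)$.

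First I would compute the denominator exactly: since $\psi_{k_m}$ is a single orthonormal element, the definition \eqref{eq:sobolev_norm} gives $\|\psi_{k_m}\|_{H^\sigma(\mathcal S,\mathcal L)}=(1+\Lambda_{k_m}^2)^{\sigma/2}$. Next I would bound the numerator from below \emph{without} computing the minimizer. The key point is that $q_m\in\mathcal Q_{r_m}(\mathcal S,\mathcal L)=\operatorname{span}\{\psi_k:\Lambda_k\le r_m\}$ has no $\psi_{k_m}$-component; because the $\psi_k$ remain mutually orthogonal in every weighted inner product $\langle\cdot,\cdot\rangle_{H^\zeta(\mathcal S,\mathcal L)}$, the element $\psi_{k_m}$ is $H^\zeta$-orthogonal to $q_m$. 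Pythagoras then yields
\[
\|\psi_{k_m}-q_m\|_{H^\zeta(\mathcal S,\mathcal L)}^2=\|\psi_{k_m}\|_{H^\zeta(\mathcal S,\mathcal L)}^2+\|q_m\|_{H^\zeta(\mathcal S,\mathcal L)}^2\ge(1+\Lambda_{k_m}^2)^{\zeta},
\]
hence $\|\psi_{k_m}-q_m\|_{H^\zeta(\mathcal S,\mathcal L)}\ge(1+\Lambda_{k_m}^2)^{\zeta/2}$.

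Substituting these two computations into the quotient gives
\[
\Psi_{\sigma,\zeta}(r_m)\ge\frac{(1+\Lambda_{k_m}^2)^{\zeta/2}}{(1+\Lambda_{k_m}^2)^{\sigma/2}}=(1+\Lambda_{k_m}^2)^{-\frac{\sigma-\zeta}{2}},
\]
which is exactly the claim. I expect no serious technical obstacle: the whole argument is a one-line extremal computation once the test function is identified. The only points requiring genuine care are conceptual: recognizing that the lower bound on the numerator must be \emph{independent} of the unknown minimizer $q_m$ — which is precisely what orthogonality of $\psi_{k_m}$ to the entire subspace $\mathcal Q_{r_m}(\mathcal S,\mathcal L)$ provides — and verifying that $k_m$ is well defined with $\psi_{k_m}\notin\mathcal Q_{r_m}(\mathcal S,\mathcal L)$. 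This argument also makes transparent why the exponent $-\tfrac{\sigma-\zeta}{2}$ is the natural barrier, confirming that the decay exponent governing Theorem~\ref{KHG} cannot be improved.
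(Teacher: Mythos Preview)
Your proof is correct and follows essentially the same approach as the paper: both choose the test function $f=\psi_{k_m}$, use its $H^\zeta$-orthogonality to $\mathcal Q_{r_m}(\mathcal S,\mathcal L)$ together with Pythagoras to bound $\|f-q_m\|_{H^\zeta}$ from below, and compute $\|f\|_{H^\sigma}$ directly.
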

\begin{proof}
Set $f=\psi_{k_m}$. Then $f$ is orthogonal to $\mathcal Q_{r_m}(\mathcal S,\mathcal L)$, and so
\begin{align*}
(1+\Lambda_{k_m}^2)^{\zeta}&=\|f\|^2_{H^\zeta(\mathcal S, \mathcal L)}\le\|f\|_{H^\zeta(\mathcal S, \mathcal L)}^2 + \|-q_m\|^2_{H^\zeta(\mathcal S, \mathcal L)}
=\|f-q_m\|_{H^\zeta(\mathcal S, \mathcal L)}^2\\
&\le
\|f\|^2_{H^\sigma(\mathcal S, \mathcal L)}(\Psi_{\sigma,\zeta}(r_m))^2
=(1+\Lambda_{k_m}^2)^\sigma(\Psi_{\sigma,\zeta}(r_m))^2,
\end{align*}
as claimed.
\end{proof}

Assume now that $\mathcal M$ is a compact Riemannian manifold without boundary. Let $\mathcal S$ denote the orthonormal basis of eigenfunctions of the Laplace-Beltrami operator and $\mathcal L$ the set of the square roots of the corresponding eigenvalues. It has been proved in \cite{LLG} (see also \cite{LW}) that in this case, under the hypotheses of Theorem~\ref{KHG} with $\zeta=0$,
\[\|f^\dagger-q_m\|_2\leq C(1+\kappa^\frac12)r_m^{-\sigma}\|f^\dagger\|_{H^\sigma(\mathcal S,\mathcal L)},\]
which is asymptotically sharp in view of Proposition~\ref{lower}. In our context, we require the possibility for $\mathcal S$ to be an orthonormal system but not necessarily a basis. 

It is unclear whether the above result from \cite{LLG} can be extended to a proper subset of a basis of eigenfunctions of the Laplace–Beltrami operator. For this reason, we shall employ a slight refinement of Gr\"ochenig's result (Theorem \ref{KHG}), which yields a better constant, though it generally does not achieve the optimal asymptotic decay.

\subsection{The construction of Marcinkiewicz-Zygmund families in Riemannian manifolds}\label{MZ}

As a first and remarkable example, let us suppose that $\mathcal{M}$ is a connected compact orientable $d$-dimensional Riemannian
manifold without boundary and with normalized Riemannian measure $d\mu$, such that
$\mu\left(  \mathcal{M}\right)  =1$.   Let us consider the distinct eigenvalues $\{0=\lambda_{0}^{2}<\lambda_{1}
^{2}<\lambda_{2}^{2}<\ldots\}$ of the 
Laplace-Beltrami operator and let $\mathcal B=\left\{  \varphi
_{m}^{\ell}\right\}  _{m=0,\ell=1}^{+\infty,\delta_m}$ be a corresponding orthonormal basis of eigenfunctions, so that  $\Delta\varphi_{m}^{\ell}=\lambda_{m}^{2}\varphi_{m}^{\ell}$. Set $\mathcal E=\{\lambda_m\}_{m=0, \ell=1}^{+\infty,\delta_m}$.

In this case, the space of diffusion polynomials of bandwith $r\geq0$ is
\[
\mathcal Q_{r}(\mathcal B,\mathcal E)=\operatorname*{span}\left\{  \varphi_{m}^\ell:\lambda_{m}\leq r\right\}  .
\]
By Weyl's estimates on the spectrum of an elliptic operator \cite[Theorem~17.5.3 and Corollary~17.5.8]{HOR}, 
\begin{equation}
\label{spectrum}
\dim\left(  \mathcal Q_{r}(\mathcal B,\mathcal E)\right)  \asymp r^{d} \quad \text{ and } \quad \sum_{m:\lambda_m\le r}\sum_{\ell=1}^{\delta_m}|\varphi_m^\ell(x)|^2\le W r^d
\end{equation}
for some $W>0$.

We begin with a definition.

\begin{definition}\label{def:partition}
Let $0<a\leq1\leq b$ and $0<c_1<c_2$. We say that a collection $\mathcal{R}=\{R_{j} \}_{j=1}^{N}$ of measurable subsets of $\mathcal M$ is a partition of $\mathcal M$ with parameters $a$, $b$, $c_1$ and $c_2$
if the following conditions hold:
\begin{itemize}
\item $\cup_{j=1}^N R_j=\mathcal M$ and $\mu(R_i\cap R_j)=0$ for every $1\le i<j\le N$;
\item $a/N\leq\mu(R_{j})\leq b/N$ for every $j=1, \cdots N$;
\item and each $R_{j}$ is contained in a geodesic ball of radius $ c_2 N^{-1/d} $ and contains a geodesic ball of radius $c_1 N^{-1/d}$.
\end{itemize}
\end{definition}

Maggioni, Mhaskar and Filbir \cite{FM1,FM, maggioni} have proven that any choice of $N$ points, each one in a region $R_j$ as above, produces a Marcinkiewicz-Zygmund family with weights equal to the measures of the regions.

\begin{Theorem}[{\cite[Theorem 5.1]{FM}}]
\label{MZ polys} Let $0<a\leq1\leq b$ and $0<c_1<c_2$. 
Then, there exist positive constants $C$ and $D$ such that for all  $N\ge 1$ and  $r>0$ such that $N\ge C r^d$, for all partitions $\{ R_{j}\}_{j=1}^{N}$ with parameters $a$, $b$, $c_1$ and $c_2$ as in Definition \ref{def:partition},  for all $x_{j} \in R_{j}$, for all $q \in \mathcal Q_{r}(\mathcal B,\mathcal{E})$ it holds
\[
\left\vert \int_{\mathcal{M}}\left\vert q(  x)  \right\vert^2
d\mu(  x)  -\sum_{j=1}^{N}\mu(R_j)\left\vert q(
x_{j})  \right\vert^2\right\vert \leq DrN^{-1/d}\int_{\mathcal{M}
}\left\vert q(  x)  \right\vert^2 d\mu(  x)  .
\]
In particular, for any $\varepsilon>0$, if $N\ge \max\{C, D^d/\varepsilon^d\}r^d$, then 
\[
\left\vert \int_{\mathcal{M}}\left\vert q(  x)  \right\vert^2
d\mu(  x)  -\sum_{j=1}^{N}\mu(R_j)\left\vert q(
x_{j})  \right\vert^2\right\vert \leq \varepsilon\int_{\mathcal{M}
}\left\vert q(  x)  \right\vert^2 d\mu(  x)  ,
\]
that is
\[
(1-\varepsilon) \int_{\mathcal{M}}\left\vert q(  x)  \right\vert^2
d\mu(  x)  \le \sum_{j=1}^{N}\mu(R_j)\left\vert q(
x_{j})  \right\vert^2 \leq (1+\varepsilon)\int_{\mathcal{M}%
}\left\vert q(  x)  \right\vert^2 d\mu(  x)  .
\]
\end{Theorem}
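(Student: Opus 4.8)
The plan is to exploit that the regions $\{R_j\}$ partition $\mathcal M$ and that each has diameter of order $N^{-1/d}$, which by the hypothesis $N\ge Cr^d$ is small compared with $r^{-1}$; on such a small scale a diffusion polynomial of degree $r$ oscillates very little. First I would rewrite the quadrature error exactly as a sum of local oscillations. Since $|q(x_j)|^2$ is constant on $R_j$ and $\mu(R_j)=\int_{R_j}d\mu$, while $\int_{\mathcal M}|q|^2\,d\mu=\sum_j\int_{R_j}|q|^2\,d\mu$ because the $R_j$ cover $\mathcal M$ and overlap only on null sets, we get
\[
\int_{\mathcal M}|q|^2\,d\mu-\sum_{j=1}^N\mu(R_j)|q(x_j)|^2=\sum_{j=1}^N\int_{R_j}\bigl(|q(x)|^2-|q(x_j)|^2\bigr)\,d\mu(x).
\]
So it suffices to bound each integrand by the oscillation of $|q|^2$ over $R_j$.

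Next I would control this oscillation by the gradient. Choosing $C$ large enough that $c_2N^{-1/d}\le c_2C^{-1/d}/r$ lies below the convexity radius of $\mathcal M$ whenever $r\ge\lambda_1$ (for $r<\lambda_1$ the space $\mathcal Q_r$ consists of constants and the estimate is trivial), any $x\in R_j$ can be joined to $x_j$ by a minimizing unit-speed geodesic $\gamma$ of length at most the diameter $2c_2N^{-1/d}$, staying in the ball $\hat B_j=B(x_j,2c_2N^{-1/d})$. Since $\frac{d}{dt}|q(\gamma(t))|^2=2\,\mathrm{Re}\,\bigl(\overline{q(\gamma(t))}\,\langle\nabla q(\gamma(t)),\dot\gamma(t)\rangle\bigr)$, integration gives
\[
\bigl|\,|q(x)|^2-|q(x_j)|^2\,\bigr|\le 4c_2N^{-1/d}\sup_{\hat B_j}\bigl(|q|\,|\nabla q|\bigr).
\]

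The main work, and the real obstacle, is to pass from these pointwise suprema over the balls $\hat B_j$ to a global $L^2$ quantity, uniformly in $N$, $r$ and in the partition. For this I would invoke two standard facts for diffusion polynomials on a compact manifold, both consequences of Gaussian-type bounds for the localized spectral (heat) kernel and its gradient. The first is a local Nikolskii inequality: for a ball $\hat B$ of radius $\lesssim r^{-1}$ and a slightly enlarged concentric ball $\tilde B$ one has $\sup_{\hat B}|q|^2\lesssim \mu(\tilde B)^{-1}\int_{\tilde B}|q|^2\,d\mu$. The second is the analogous estimate for the gradient, $\sup_{\hat B}|\nabla q|^2\lesssim r^2\mu(\tilde B)^{-1}\int_{\tilde B}|q|^2\,d\mu$, where the extra factor $r^2$ is a Bernstein gain coming from differentiating the reproducing kernel. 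The hypothesis $N\ge Cr^d$ is exactly what makes the radius $\sim N^{-1/d}$ admissible (that is, $\lesssim r^{-1}$) for these local estimates. Combining them by Cauchy–Schwarz yields $\sup_{\hat B_j}(|q|\,|\nabla q|)\lesssim r\,\mu(\tilde B_j)^{-1}\int_{\tilde B_j}|q|^2\,d\mu$.

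Finally I would assemble the pieces geometrically. Using $\mu(R_j)\le b/N$, the volume comparison $\mu(\tilde B_j)\asymp N^{-1}$ (balls of radius $\asymp N^{-1/d}$, small for $N$ large), and the bounded overlap of the enlarged balls $\{\tilde B_j\}$ — a packing argument based on the fact that each $R_j$ contains a disjoint geodesic ball of radius $c_1N^{-1/d}$, so that at most $K=K(a,b,c_1,c_2,\mathcal M)$ of the $\tilde B_j$ meet any given point — I obtain
\[
\sum_{j=1}^N\int_{R_j}\bigl|\,|q(x)|^2-|q(x_j)|^2\,\bigr|\,d\mu\lesssim N^{-1/d}\,r\sum_{j=1}^N\int_{\tilde B_j}|q|^2\,d\mu\le K\,r\,N^{-1/d}\int_{\mathcal M}|q|^2\,d\mu,
\]
which is the asserted bound with $D$ the accumulated constant. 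The second (``In particular'') statement is then immediate: if $N\ge\max\{C,(D/\varepsilon)^d\}r^d$ then $DrN^{-1/d}\le\varepsilon$, turning the estimate into the two-sided inequality with constants $1\pm\varepsilon$.
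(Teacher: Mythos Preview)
The paper does not prove this theorem at all: it is quoted verbatim from \cite[Theorem~5.1]{FM} and used as a black box, so there is no ``paper's own proof'' to compare against. Your sketch is essentially the argument of the cited reference: rewrite the quadrature error as $\sum_j\int_{R_j}(|q|^2-|q(x_j)|^2)\,d\mu$, control the local oscillation of $|q|^2$ via a Lipschitz/Bernstein bound on diffusion polynomials at scale $r^{-1}$, and sum using the bounded-overlap geometry of the partition. The ``In particular'' part is, as you say, immediate.

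Two remarks on the substantive step. First, the inequalities you label ``standard facts'' are precisely the non-trivial content of the Filbir--Mhaskar machinery: they require the construction of smooth localized reproducing kernels $\Phi_r(x,y)$ for $\mathcal Q_r$ with near-diagonal concentration $|\Phi_r(x,y)|\lesssim r^d(1+r\rho(x,y))^{-M}$ and a matching gradient bound, which in turn rest on Gaussian heat-kernel estimates on $\mathcal M$. In \cite{FM} the oscillation bound is obtained directly from $q(x)-q(x_j)=\int(\Phi_r(x,\cdot)-\Phi_r(x_j,\cdot))q\,d\mu$ and the Lipschitz estimate for $\Phi_r$, rather than by first isolating a local Nikolskii inequality with the $L^2$ norm restricted to an enlarged ball $\tilde B_j$; the latter formulation is a bit stronger than what is strictly needed and, while it can be extracted from the same kernel estimates, you should be aware that it does not follow from soft arguments. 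Second, your bounded-overlap step is fine: the open balls of radius $c_1N^{-1/d}$ inside the $R_j$ are pairwise disjoint (any overlap would be an open set of measure zero), and a volume-doubling packing argument then bounds the multiplicity of the $\tilde B_j$.
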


It remains to show that partitions as required by the previous theorem do exist. Indeed the following theorem holds (see \cite{cubature, design, GL}).

\begin{Theorem} \label{partition}
There exist two constants $0<c_3<c_4$ such that for all constants $a$ and $b$ with $0<a\leq 1\leq b$, for every $N \geq1$ and for every choice of weights $\{\tau_{j} \}_{j=1}^{N}$ with $\sum_{j=1}^{N} \tau_{j}=1$ and $a/N \leq \tau_{j} \leq b/N$,  
there is a partition of $\mathcal M$, $\mathcal{R}=\{R_{j} \}_{j=1}^{N}$, with parameters $a$, $b$, $c_3(a^2/b)^{1/d}$ and $c_4b^{1/d}$ such that $\mu(R_{j})=\tau_j$ for all $j=1,\ldots,N$.
\end{Theorem}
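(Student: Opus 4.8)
The plan is to reduce the statement to the abstract theory of diameter-bounded, prescribed-measure partitions of Ahlfors regular metric measure spaces developed in \cite{GL,cubature,design}, the only genuinely Riemannian input being a regularity estimate for the measure. Concretely, I would equip $\mathcal M$ with its geodesic distance $\rho$ and the normalized measure $\mu$, and first establish that $(\mathcal M,\rho,\mu)$ is \emph{Ahlfors $d$-regular}: there exist constants $0<\alpha\le\beta$ with
\[
\alpha\, r^{d}\le \mu\bigl(B(x,r)\bigr)\le \beta\, r^{d}\qquad\text{for all } x\in\mathcal M \text{ and } 0<r\le \operatorname{diam}(\mathcal M).
\]
For small $r$ this follows from the smoothness of the metric: in geodesic normal coordinates the Riemannian volume is comparable to Lebesgue measure, and compactness of $\mathcal M$ makes the comparison constants uniform in the base point. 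For $r$ up to the (finite) diameter, the upper bound is immediate from $\mu(\mathcal M)=1$, while the lower bound follows from compactness together with connectedness, which forbid the volume of a macroscopic ball from degenerating.

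Given Ahlfors regularity, I would build the partition by a recursive ball-sweep that simultaneously splits the index set and the space. The elementary step is: on a region $R$ of controlled shape, fix a point $x_0\in R$ and let $g(t)=\mu\bigl(R\cap B(x_0,t)\bigr)$; this is continuous and nondecreasing from $0$ to $\mu(R)$, so by the intermediate value theorem any target mass in $[0,\mu(R)]$ is attained, carving off a piece $R\cap B(x_0,t^\ast)$. When the carved ball lies inside $R$, the lower Ahlfors bound $\mu(B(x_0,t^\ast))\ge\alpha (t^\ast)^d$ turns its mass into the radius bound $t^\ast\le(\text{mass}/\alpha)^{1/d}$, which is the mechanism converting a mass budget into a diameter budget. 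Iterating, I would split the weight collection $\{\tau_j\}$ into two subcollections of comparable total mass (the comparability controlled by $a$ and $b$), peel off the corresponding sub-region by such a sweep, and recurse on each part; after $O(\log N)$ levels every cell has mass $\tau_j$, and $a/N\le\tau_j\le b/N$ forces the circumscribed radius to be $\lesssim b^{1/d}N^{-1/d}$, i.e.\ $c_4 b^{1/d}N^{-1/d}$ with $c_4\sim\alpha^{-1/d}$.

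The ``fat cell'' bound — that each $R_j$ contains a geodesic ball of radius $c_3(a^2/b)^{1/d}N^{-1/d}$ — I would obtain by propagating the \emph{upper} Ahlfors bound through the recursion: a cell of mass $\ge a/N$ cannot be pinched below a definite scale without violating $\mu(B)\le\beta r^d$, and the factor $(a^2/b)^{1/d}$ records the worst-case distortion accumulated when the ambient subcollection may carry mass as large as $b/N$ relative to the cell's mass $a/N$. The constants $c_3,c_4$ depend only on $d$ (through $\alpha,\beta$ and the geometry of $\mathcal M$), not on $N$ or on the particular weights, as required; the hypothesis $\sum_j\tau_j=1=\mu(\mathcal M)$ guarantees the recursion terminates with the whole manifold exactly exhausted.

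The main obstacle is the shape control of the \emph{complementary} pieces generated by the ball-sweep: a ball carved out of $R$ has small diameter, but the leftover $R\setminus B(x_0,t^\ast)$ is annular and can a priori be long and thin, so a naive bisection degrades the aspect ratio at each level. Overcoming this is exactly the technical heart of \cite{GL} and its weighted refinements \cite{cubature,design}: one must organize the recursion zonally (as in the recursive zonal construction on the sphere), using the connectedness of $\mathcal M$ and the two-sided regularity to keep both the circumscribed and inscribed radii of every cell comparable to the target scale $N^{-1/d}$ uniformly across all $O(\log N)$ levels. I would therefore invoke this machinery for the detailed bookkeeping rather than reproduce it, having verified that its sole hypothesis — Ahlfors $d$-regularity of $(\mathcal M,\rho,\mu)$ — holds.
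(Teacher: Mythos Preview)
The paper does not supply a proof of this theorem at all; it simply states the result and cites \cite{cubature, design, GL}. Your sketch correctly isolates the only analytic input (Ahlfors $d$-regularity of $(\mathcal M,\rho,\mu)$), correctly identifies the recursive ball-sweep / zonal mechanism and its main obstacle (aspect-ratio control of the complementary pieces), and then does exactly what the paper does---defers the delicate bookkeeping to those same references---so your approach is fully aligned with the paper's.
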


\section{Sampling in inverse problems}\label{sec:sampling_IP}

In this section, we apply the previous results to inverse problems. More precisely, we consider the problem consisting of the recovery of a signal $f^\dagger$ from sampled measurements of $Ff^\dagger$, where $F$ is a linear map.

We make the following assumptions. The first assumption concerns the measurement space $Y$ for the inverse problem, which will be a Sobolev space on $\mathcal{M}$.

\begin{assumption}\label{spazio Y}
Let $\mathcal M$ be a topological space and $\mu$ be a probability measure in $\mathcal M$. Let $\mathcal B=\{\varphi_k:k\in\mathbb N\}$ be an orthonormal basis in $L^2(\mathcal M,\mu)$, such that each $\varphi_k$ is continuous and bounded on $\mathcal M$, and $\varphi_0\equiv 1$. 
Let $\mathcal E=\{\lambda_k:\lambda_k\ge0\}$ be a non-decreasing sequence with $\lambda_k\to +\infty$.
Assume that there exists a Marcinkiewicz-Zygmund family $\mathcal X$ for $\mathcal B$ and $\mathcal E$, with nodes $\{x_{m,j}:m\in\mathbb N,\,j=1,\ldots, L_m\}\subseteq{\mathcal M}$,
weights $\{\tau_{m,j}\}$ and degrees $\{r_m\}$, such that 
\[
A\|q\|_2^2\le\sum_{j=1}^{L_m}|q(x_{m,j})|^2\tau_{n,j}\le B\|q\|_2^2\qquad \text{ for all } q\in\mathcal Q_{r_m}(\mathcal B,\mathcal E),
\]
with constants $A,\,B>0$ independent of $m$.
Let $Y=H^\sigma(\mathcal B,\mathcal E)$, for some $\sigma\ge0$.

\end{assumption}

In the second assumption, we fix a Galerkin nested sequence of subspaces of $X$, the space where the unknown $f$ belongs to.

\begin{assumption}\label{spazio X}
Let $X$ be a Banach space and
assume that $X$ admits a multi-resolution structure, that is a family $\{G_m\}_{m\in \mathbb N}$ of subspaces of $X$ such that
\begin{enumerate}[(i)]
\item each $G_m$ is finite-dimensional,
\item $G_m\subseteq G_{m+1}$,
\item $\overline{\cup_{m=0}^{+\infty}G_m}=X$. 
\end{enumerate} 
\end{assumption}

The last assumption quantifies the smoothing effect of $F$.

\begin{assumption}\label{operatore F}
Let $F\colon X\to Y$ be linear and bounded, and assume that 
\begin{enumerate}[(i)]
\item$
F(G_m)\subseteq \mathcal Q_{r_{m}}(\mathcal B,\mathcal E)$, 
\item
$1\in F(G_0)$,
\item $\dim \operatorname{Im} F=+\infty$.
\end{enumerate}
\end{assumption}

We now build a new orthonormal system $\mathcal S$ of $L^2(\mathcal M, \mu)$ and a new non-decreasing sequence $\mathcal L=\{\Lambda_k:\Lambda_k\ge0\}$  with $\Lambda_k\to +\infty$, adapted to the operator $F$. Let us begin with $\mathcal S_0= \{\psi_k:0\le k\le d_0\}$, an orthonormal basis for $F(G_0)$, with $\psi_0\equiv 1$. Being $F(G_0)\subseteq \mathcal Q_{r_{0}}(\mathcal B,\mathcal E)$, for each $k$ between $0$ and $d_0$ we set $\Lambda_k=r_{0}$.
Let us now complete (if necessary)  $\mathcal S_0$ to an orthonormal basis $\mathcal S_1= \{\psi_k:0\le k\le d_1\}$ for $F(G_1)$. Being $F(G_1)\subseteq \mathcal Q_{r_{1}}({\mathcal{B}},\mathcal E)$, for each $k$ between $d_0+1$ and $d_1$ (if any) we set $\Lambda_k=r_{1}$.
Proceeding similarly, for any $m\ge 1$, we complete (if necessary)  $\mathcal S_m$ to an orthonormal basis $\mathcal S_{m+1}= \{\psi_k:0\le k\le d_{m+1}\}$ for $F(G_{m+1})$. 
Being $F(G_{m+1})\subseteq \mathcal Q_{r_{{m+1}}}(\mathcal B,\mathcal E)$, 
for each $k$ between $d_m+1$ and $d_{m+1}$ (if any) we set $\Lambda_k=r_{{m+1}}$.
We call $\mathcal S=\{\psi_k:k\in\mathbb N\}$ and $\mathcal L=\{\Lambda_k:k\in\mathbb N\}$. Note that, thanks to Assumption~\ref{operatore F} (iii), we have $\Lambda_k\to+\infty$.

Furthermore, $\mathcal Q_{r_{m}}(\mathcal S,\mathcal L)=\mathrm{span}\{\psi_k :\Lambda_k\le r_{m}\}=\mathrm{span}\{\psi_k:0\le k\le d_{m}\}=F(G_m)\subseteq \mathcal{Q}_{r_m}(\mathcal B, \mathcal E)$.

In the following result, we investigate the recovery of $Ff^\dagger$ from a finite number of noisy sampled measurements.

\begin{Corollary}\label{noisebis}
Let us suppose that Assumptions \ref{spazio Y}, \ref{spazio X}, \ref{operatore F} hold and 
let $\sigma\ge \zeta\ge 0$. Let $f^\dagger\in X$ be such that $Ff^\dagger$ is continuous and let $\{y_{m,j}\}$ be such that
\[
 |y_{m,j}- Ff^\dagger(x_{m,j})|\leq\beta \quad j=1,\ldots,L_m
\]
where $\beta \ge 0$ is the noise level. Let 
\[
p_m^\beta\in\operatornamewithlimits{argmin}_{p\in G_m}\sum_{j=1}^{L_{m}}|y_{m,j}-Fp(x_{m,j})|^2\tau_{m,j}.
\] Then
\[
\|Ff^\dagger-Fp_m^\beta\|_{H^\zeta(\mathcal S,\mathcal L)}\le \sqrt{1+\kappa}\|Ff^\dagger\|_{H^\sigma(\mathcal S, \mathcal L)}\phi_{\sigma-\zeta}(r_{m}, \mathcal S, \mathcal L)+\sqrt{\kappa}\beta(1+r_m^2)^{\frac\zeta2}.
\]
\end{Corollary}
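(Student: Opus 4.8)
The plan is to reduce Corollary~\ref{noisebis} to the already-established Corollary~\ref{noise} by recognizing that the recovery of $Ff^\dagger$ through the Galerkin scheme on $G_m$ is, after transporting everything through $F$, exactly the abstract noisy reconstruction problem for the orthonormal system $\mathcal S$ and the sequence $\mathcal L$ constructed just before the statement. The key observation is that minimizing over $p\in G_m$ the quantity $\sum_j|y_{m,j}-Fp(x_{m,j})|^2\tau_{m,j}$ is equivalent to minimizing over $q\in\mathcal Q_{r_m}(\mathcal S,\mathcal L)$, because $F(G_m)=\mathcal Q_{r_m}(\mathcal S,\mathcal L)$ by the construction of $\mathcal S$ and $\mathcal L$ (this is the displayed identity $\mathcal Q_{r_m}(\mathcal S,\mathcal L)=F(G_m)$ immediately preceding the corollary). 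Thus $Fp_m^\beta=T^\dagger\{y_{m,j}\}$, where $T^\dagger$ is the pseudoinverse from Lemma~\ref{dagger} relative to $\mathcal S,\mathcal L$.

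First I would verify that $\mathcal S$ and $\mathcal L$ satisfy the hypotheses (i)--(ii) required of an orthonormal system and an admissible sequence: each $\psi_k$ is continuous and bounded (being a finite linear combination of the $\varphi_k$'s, since $F(G_m)\subseteq\mathcal Q_{r_m}(\mathcal B,\mathcal E)$ and the $\varphi_k$ are continuous and bounded), $\psi_0\equiv 1$ (guaranteed by Assumption~\ref{operatore F}(ii) together with the choice $\psi_0\equiv 1$), and $\Lambda_k\to+\infty$ (from Assumption~\ref{operatore F}(iii), as already noted). Next I would confirm that the same family of nodes $\{x_{m,j}\}$, weights $\{\tau_{m,j}\}$ and degrees $\{r_m\}$ constitutes a Marcinkiewicz-Zygmund family for $\mathcal S$ and $\mathcal L$. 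This is where the inclusion $\mathcal Q_{r_m}(\mathcal S,\mathcal L)=F(G_m)\subseteq\mathcal Q_{r_m}(\mathcal B,\mathcal E)$ is essential: the Marcinkiewicz-Zygmund inequality from Assumption~\ref{spazio Y} holds for \emph{all} $q\in\mathcal Q_{r_m}(\mathcal B,\mathcal E)$, hence in particular for all $q\in\mathcal Q_{r_m}(\mathcal S,\mathcal L)$, with the \emph{same} constants $A,B$. Therefore $\kappa=B/A$ is unchanged.

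Once these checks are in place, the statement follows by applying Corollary~\ref{noise} with the data $f^\dagger$ there replaced by $Ff^\dagger$: indeed $Ff^\dagger\in H(\mathcal S)$ since $H(\mathcal S)=\overline{\operatorname{Im}F}$ by construction, it is continuous by hypothesis, and $y_{m,j}$ are its noisy samples with the same noise level $\beta$. Corollary~\ref{noise} then yields precisely
\[
\|Ff^\dagger-Fp_m^\beta\|_{H^\zeta(\mathcal S,\mathcal L)}\le \sqrt{1+\kappa}\,\|Ff^\dagger\|_{H^\sigma(\mathcal S,\mathcal L)}\,\phi_{\sigma-\zeta}(r_m,\mathcal S,\mathcal L)+\sqrt{\kappa}\,\beta\,(1+r_m^2)^{\frac\zeta2},
\]
which is the desired bound. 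The main obstacle, and the only genuinely non-bookkeeping point, is the identification of the minimizers: one must argue that $F$ restricts to a bijection between $G_m$ and $F(G_m)=\mathcal Q_{r_m}(\mathcal S,\mathcal L)$, or at least that the image $Fp_m^\beta$ of any minimizer over $G_m$ coincides with the unique minimizer $T^\dagger\{y_{m,j}\}$ over $\mathcal Q_{r_m}(\mathcal S,\mathcal L)$. Since the objective depends on $p$ only through $Fp$, and $Fp$ ranges exactly over $\mathcal Q_{r_m}(\mathcal S,\mathcal L)$ as $p$ ranges over $G_m$, the infima coincide and $Fp_m^\beta=T^\dagger\{y_{m,j}\}$ is forced by the uniqueness asserted in Lemma~\ref{dagger}; the possible non-uniqueness of $p_m^\beta$ itself in $G_m$ (if $F$ is not injective on $G_m$) is immaterial because the conclusion only involves $Fp_m^\beta$.
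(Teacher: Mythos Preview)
Your proposal is correct and follows essentially the same route as the paper: identify $F(G_m)=\mathcal Q_{r_m}(\mathcal S,\mathcal L)$ so that $Fp_m^\beta$ is the least-squares minimizer over $\mathcal Q_{r_m}(\mathcal S,\mathcal L)$, observe that the Marcinkiewicz--Zygmund inequality for $\mathcal B,\mathcal E$ restricts to $\mathcal S,\mathcal L$ with the same constants, and then invoke Corollary~\ref{noise}. Your additional remarks on the possible non-uniqueness of $p_m^\beta$ and on why $Ff^\dagger\in H(\mathcal S)$ are more detailed than what the paper spells out, but the underlying argument is the same.
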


\begin{proof}
Since $F(G_m)=\mathcal Q_{r_{m}}(\mathcal S,\mathcal L)$, it follows that
\[Fp_m^\beta=\operatornamewithlimits{argmin}_{q\in \mathcal{Q}_{r_{m}}(\mathcal S, \mathcal L)}\sum_{j=1}^{L_{m}}|y_{m,j}-q(x_{m,j})|^2\tau_{m,j}.\]
Since $\mathcal Q_{r_{m}}(\mathcal S,\mathcal L)\subseteq \mathcal Q_{r_{m}}(\mathcal B,\mathcal E)$ by Assumption \ref{operatore F}, it follows that $\mathcal X$ is a Marcinkiewicz-Zygmund family also for $\mathcal S$ and $\mathcal L$ with the same constants $A$ and $B$, and by Corollary \ref{noise}, the thesis follows.
\end{proof}

\section{Convolutions on compact two-point homogeneous spaces}\label{sec:convolution}

Here, we apply the results of Section~\ref{sec:sampling_IP} to a particular class of forward maps $F$, namely, convolutions on compact two-point homogeneous spaces. Two-point homogeneous spaces are introduced in $\S$\ref{sub:two-point}, zonal functions and convolutions are described in $\S$\ref{sub:zonal} and, finally, the problem of sampling convolutions is discussed in $\S$\ref{sub:sampling_convolutions}.

\subsection{Two-point homogeneous spaces}\label{sub:two-point}

A $d$-dimensional Riemannian manifold $\mathcal{M}$ with distance $\rho$ is
said to be a two-point homogeneus space if given four points $x_{1}%
,x_{2},y_{1},y_{2}\in\mathcal{M}$ such that $\rho(x_{1},y_{1})=\rho
(x_{2},y_{2})$, then there exists an isometry $g$ of $\mathcal{M}$ such that
$gx_{1}=x_{2}$ and $gy_{1}=y_{2}$. Wang in \cite{Wang} has completely
characterized compact connected two-point homogeneous spaces. More precisely
$\mathcal{M}$ is isometric to one of the following compact rank
$1$ symmetric spaces:

\begin{enumerate}[i)]
\item the Euclidean sphere $S^{d}=SO(d+1) /SO(d)\times\{1\}$,
$d\geqslant1$;

\item the real projective space $P^{n}( \mathbb{R}) =O(n+1) /O( n)
\times O( 1) $, $n\geqslant2$;

\item the complex projective space $P^{n}( \mathbb{C}) =U(n+1) /U( n)
\times U( 1) $, $n\geqslant2$;

\item the quaternionic projective space $P^{n}( \mathbb{H})=Sp(n+1)/Sp(
n) \times Sp( 1) $, $n\geqslant2$;

\item the octonionic projective plane $P^{2}( \mathbb{O}) $.
\end{enumerate}

In the following we will assume that $\mathcal{M}$ is one of the above
symmetric spaces and that $d$ is its real dimension. In particular the real
dimension of $P^{n}(\mathbb{K})$ is $d=nd_{0}$, where $d_{0}=1,2,4,8$
according to the real dimension of $\mathbb{K}=\mathbb{R}$, $\mathbb{C}$,
$\mathbb{H}$ and $\mathbb{O}$ respectively. In the case of $S^{d}$ it will be
convenient to set $d_{0}=d$. See \cite[pp. 176-178]{Gangolli}, see also
\cite{H}, \cite{Skriganov} and \cite{Wolf}. In the following, to keep the notation simple, we will use
\begin{equation}\label{eq:abdd0}
a =\frac{d-2}{2}, \ \quad\ b =\frac{d_{0}-2}{2}.
\end{equation}

Let $\mu$ be the Riemannian measure on $\mathcal{M}$ normalized so that
$\mu(\mathcal{M})=1$. Let us assume that $\rho$ is normalized so that $\mathrm{diam}(\mathcal{M})=\pi$ and let $B_{r}(x)=\{y\in\mathcal{M}:\rho(x,y)<r\}$.

If $o$ is a fixed point in $\mathcal{M}$, then $\mathcal{M}$ can be identified
with the homogeneous space $G/K$, where $G$ is the group of isometries of
$\mathcal{M}$ and $K$ is the stabilizer of $o$.  We will also identify
functions $f(x)$ on $\mathcal{M}$ with right $K$-invariant functions $\widetilde f(g)$ on
$G$ by setting $\widetilde f(g)=f(x)$ when $go=x$. The measure $\mu$ is invariant under the action of $G$;
in other words, for every $g\in G$,
\[
\int_{\mathcal{M}}f(gx)d\mu(x)=\int_{\mathcal{M}}f(x)d\mu(x).
\]

Let $dg$ and $dk$ denote  left Haar measures on $G$ and  $K$, respectively. These measures are normalized in such a way that $\int_G dg =1$ and $\int_K dk =1$. By \cite[Theorem 2.51]{Fo}, we have
\begin{equation}\label{eq:251}
\int_G f(go)\,dg =     \int_{\mathcal{M}}f(x)d\mu(x), \qquad f\in L^1(\mathcal M).
\end{equation}

\subsection{Zonal functions and convolutions}\label{sub:zonal}

\begin{definition}
A function $f$ on $\mathcal{M}$ is a zonal function if for every
$x\in\mathcal{M}$ and every $k\in K$ we have $f( kx) =f( x) $.
\end{definition}

\begin{lemma}
\label{Lemma misura}Let $f$ be a zonal function. Then $f( x) $ depends only on
$\rho( x,o) $. Furthermore, defining $f_{0}$ so that $f( x) =f_{0}( \rho( x,o)
) $ we have%
\begin{equation}
\int_{\mathcal{M}}f( x) d\mu( x) =\int_{0}^{\pi}f_{0}( r) A( r) dr,
\label{IntegraleZonale}%
\end{equation}
where%
\[
A( r) =c( a,b) \left(  \sin\Big(\frac{r}{2}\Big)\right)  ^{2a+1}\left(  \cos\Big(\frac{r}
{2}\Big)\right)  ^{2b+1}%
\]
and
\[
c(a,b) =\left(  \int_{0}^{\pi}\left(  \sin\Big(\frac{r}{2}\Big)\right)  ^{2a+1}\left(
\cos\Big(\frac{r}{2}\Big)\right)  ^{2b+1}dr\right)  ^{-1} =\frac{\Gamma(a+b+2)}
{\Gamma(a+1)\Gamma(b+1)}.
\]

\end{lemma}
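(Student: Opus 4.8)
The plan is to prove the three assertions in turn: that $f$ is radial, that the pushforward of $\mu$ under the distance function has the stated density, and that the normalizing constant admits the claimed closed form.

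First I would show that a zonal $f$ depends only on $\rho(x,o)$. Suppose $\rho(x,o)=\rho(y,o)$. Applying the two-point homogeneity property to the four points $x_1=x$, $y_1=o$, $x_2=y$, $y_2=o$, which satisfy $\rho(x_1,y_1)=\rho(x_2,y_2)$ by hypothesis, yields an isometry $g$ of $\mathcal M$ with $gx=y$ and $go=o$. Since $g$ fixes $o$ it lies in the stabilizer $K$, and zonality gives $f(y)=f(gx)=f(x)$. Hence $f(x)$ is a function of $\rho(x,o)$ alone, so $f_0$ is well defined.

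Next, for the integral formula, I would introduce the distance map $\rho_o\colon\mathcal M\to[0,\pi]$, $x\mapsto\rho(x,o)$, and let $\nu=(\rho_o)_*\mu$ be its pushforward, a Borel probability measure on $[0,\pi]$. For any zonal $f=f_0\circ\rho_o$ the change-of-variables formula for pushforward measures gives $\int_{\mathcal M}f\,d\mu=\int_0^\pi f_0\,d\nu$, so everything reduces to identifying $d\nu$. The geometric content is that $d\nu=A(r)\,dr$ with $A(r)$ proportional to $(\sin(r/2))^{2a+1}(\cos(r/2))^{2b+1}$, which is the radial part of the Riemannian volume element of a compact rank-one symmetric space. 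I would obtain it from the Cartan ($KAK$) decomposition: writing $g=k_1\,a_r\,k_2$ with $a_r$ translating $o$ a distance $r$ along a fixed geodesic, the Haar measure factors as $dg=\gamma\,J(r)\,dk_1\,dr\,dk_2$ for a normalizing constant $\gamma$, and the Jacobian $J(r)$ equals the classical density $(\sin(r/2))^{d-d_0}(\sin r)^{d_0-1}$ determined by the two positive restricted roots of multiplicities $d-d_0$ and $d_0-1$. Using $\sin r=2\sin(r/2)\cos(r/2)$ together with $2a+1=d-1$ and $2b+1=d_0-1$ this becomes proportional to $(\sin(r/2))^{2a+1}(\cos(r/2))^{2b+1}$; alternatively the radial density may be quoted directly from Helgason or Gangolli. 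Pushing \eqref{eq:251} through this decomposition and absorbing the $K$-integrations then yields the displayed formula. Finally I would fix the constant: taking $f\equiv1$, which is zonal with $f_0\equiv1$, the formula and $\mu(\mathcal M)=1$ force $\int_0^\pi A(r)\,dr=1$, i.e. $c(a,b)=\big(\int_0^\pi(\sin(r/2))^{2a+1}(\cos(r/2))^{2b+1}\,dr\big)^{-1}$, which is the first expression. Substituting $t=r/2$ and invoking the Beta integral $\int_0^{\pi/2}(\sin t)^{2p-1}(\cos t)^{2q-1}\,dt=\tfrac12\Gamma(p)\Gamma(q)/\Gamma(p+q)$ with $p=a+1$, $q=b+1$ gives $\int_0^\pi(\sin(r/2))^{2a+1}(\cos(r/2))^{2b+1}\,dr=\Gamma(a+1)\Gamma(b+1)/\Gamma(a+b+2)$, whence $c(a,b)=\Gamma(a+b+2)/(\Gamma(a+1)\Gamma(b+1))$.

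The main obstacle is the middle step: pinning down the \emph{explicit} form of the radial density $A(r)$, which is the genuine geometric input and relies on the structure theory of rank-one symmetric spaces (the restricted root system and its multiplicities). Once that form is granted, the reduction to a one-dimensional integral, the normalization against $\mu(\mathcal M)=1$, and the evaluation of $c(a,b)$ via the Beta function are entirely routine.
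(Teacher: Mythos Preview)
Your argument is correct and, for the first claim (that a zonal $f$ depends only on $\rho(x,o)$), it is essentially identical to the paper's: both use two-point homogeneity to produce an isometry fixing $o$ and carrying $x$ to $y$, hence lying in $K$. For the integral formula the paper simply cites Gangolli's (4.17), whereas you sketch the underlying derivation via the $KAK$ decomposition and the restricted-root multiplicities, and you additionally supply the Beta-integral computation of $c(a,b)$, which the paper states without proof; so your treatment is more self-contained but rests on the same structure-theory input.
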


\begin{proof}
Let $x,y\in\mathcal{M}$ be such that $\rho( x,o) =\rho( y,o) $. Since $\mathcal{M}$ is two-point homogeneous there exists $g\in G$ such that $gx=y$ and $go=o$. Thus, $g\in K$ and $f( y) =f( gx) =f( x) $. Equation \eqref{IntegraleZonale} follows from (4.17) in \cite{Gangolli}. 
\end{proof}
We now recall the definition of convolution between two functions $h$ and $f$ defined on the sphere. As shown by Lemma~\ref{zonal_par} below, the convolution only depends on the zonal function $\widetilde h$ associated to $h$ by~\eqref{eq:tilde_h}. 
\begin{definition}
If $f$ and $h$ are two integrable functions on $\mathcal M$, then we define the convolution of $h$ and $f$ by
\[
h\ast f(x)=\int_G h(g^{-1}x) f(go)dg.
\]
\end{definition}

The following lemma shows that this convolution with filter $h$ coincides with the convolution with a zonal function $\widetilde h$.
\begin{lemma}\label{zonal_par}
    Let $f,h\in L^1(\mathcal{M})$. Write
    \begin{equation}\label{eq:tilde_h}
    \widetilde h(x) :=\int_K h(kx)\,dk = \widetilde{h}_0(\rho(x, o)).
\end{equation}
    Then $ \widetilde h$ is zonal and
    \[
    h*f =  \widetilde h*f.
    \]
    Furthermore
       \[
\widetilde h*f(x)=\int_{\mathcal M}\widetilde{h}_0(\rho(x, y))f(y)d\mu(y).
    \]
\end{lemma}
\begin{proof}
Take $x\in \mathcal M$ and write $x=u o$ with some $u\in G$. Recall that $dg$ is also a right Haar measure on $G$. Observe that for any $k\in K$, by the change of variable $w=k^{-1}g^{-1}u$,
\[
h\ast f(x)=\int_G h(g^{-1}uo) f(go)dg=\int_G h(kwo) f(uw^{-1}k^{-1}o)dw
=\int_G h(kwo) f(uw^{-1}o)dw.
\]
Integration in $k\in K$ gives
\[
h\ast f(x)=\int_K \int_G h(kwo) f(uw^{-1}o)dwdk= \int_G f(uw^{-1}o) \int_K h(kwo)dkdw.
\]
Notice that the function 
$\widetilde h$ is zonal on $\mathcal M$.
It follows that 
\[
h\ast f(x)= \int_G f(uw^{-1}o) \widetilde{h}(wo)dw= \int_G \widetilde{h}(g^{-1}uo)f(go) dg=\widetilde{h}\ast f(x).
\]
In other words, the convolution depends only on $\widetilde{h}$. Observe that 
\begin{align*}
\widetilde{h}\ast f(x)&= \int_G \widetilde{h}(g^{-1}x)f(go)dg= \int_G \widetilde{h}_0(\rho(g^{-1}x, o))f(go)dg\\&= \int_G\widetilde{h}_0(\rho(x, go))f(go)dg=\int_{\mathcal M}\widetilde{h}_0(\rho(x, y))f(y)d\mu(y),
\end{align*}
where we used \eqref{eq:251} in the last identity.
\end{proof}

Let $\Delta$ be the Laplace-Beltrami operator on $\mathcal{M}$, let \[
0=\lambda_{0}^2<\lambda_{1}^2<
\lambda_{2}^2<\ldots,
\]
be the distinct eigenvalues of $\Delta$
arranged in increasing order, let $\mathcal{H}_{m}$ be the eigenspace associated with the eigenvalue $\lambda^2_{m}$ and let $\delta_m$ be its dimension. In the case $m=0$, we have $\mathcal{H}_0=\operatorname{span}\{1\}$ and $\delta_0=1$.
It is well known that%
\begin{equation}
L^{2}(\mathcal{M})=
{\displaystyle\bigoplus_{m=0}^{+\infty}}
\mathcal{H}_{m}. \label{Decomp L2}%
\end{equation}
If $f(x)=f_{0}(\rho(x,o))$ is a zonal function on $\mathcal{M}$, then%
\begin{equation}
\Delta f(x)=\left.  \frac{1}{A(t)}\frac{d}{dt}\left(  A(t)\frac{d}{dt}%
f_{0}(t)\right)  \right\vert _{t=\rho(x,o)} \label{Radial Laplace Beltrami}%
\end{equation}
(see (4.16) in \cite{Gangolli}).

Next, we study $\mathcal{H}_m$ as a reproducing kernel Hilbert space.

\begin{definition}
The zonal spherical function of degree $m\in\mathbb{N}$ with pole
$x\in\mathcal{M}$ is the unique function $Z_{x}^{m}\in\mathcal{H}_{m}$, given
by the Riesz representation theorem, such that for every $Y\in\mathcal{H}_{m}$%
\[
Y( x) =\int_{\mathcal{M}}Y( y) Z_{x}^{m}( y) d\mu( y) .
\]

\end{definition}

The next lemma summarizes the main properties of zonal functions and is
essentially taken from \cite{SW}, where the case $\mathcal{M}=S^{d}$ is
discussed in detail. For a proof of the general case, see \cite{BGGM1, BGG}.

\begin{lemma}\label{lem:zonal_rkhs}
Take $m\in \mathbb N$ and $x,y\in\mathcal{M}$. We have that:
\begin{enumerate}[(i)]
\item \label{lemma:i} if $Y_{m}^{1},\ldots,Y_{m}^{\delta_{m}}$ is an orthonormal basis of
$\mathcal{H}_{m}\subset L^{2}( \mathcal{M}) $, then%
\[
Z_{x}^{m}( y) =\sum_{\ell=1}^{\delta_{m}}\overline{Y_{m}^{\ell}( x) }Y_{m}^{\ell}(
y) ;
\]

\item the function $Z_{x}^{m}$ is real valued and $Z_{x}^{m}( y) =Z_{y}^{m}( x)$;

\item  \label{lemma:iii} if $g\in G$, then $Z_{gx}^{m}( gy) =Z_{x}^{m}( y)$;

\item \label{lemma:iv} for every $f\in L^2(\mathcal M)$, $Z_o^m \ast f=\Pi_m f$ where $\Pi_m f$ is the orthogonal projection of $f$ on $\mathcal H_m$;

\item \label{lemma:v} $Z_{x}^{m}( x) =\Vert Z_{x}^{m}\Vert_{2}^{2}=\delta_{m}$;

\item $Z_{o}^{m} $ is a zonal function and
\begin{equation}
Z_{o}^{m}( x) =\frac{\delta_{m}}{P_{m}^{a,b}( 1) }P_{m}^{a,b}( \cos( \rho( x,o) ) )
\label{Jacobi}%
\end{equation}
where $P_{m}^{a,b}$ are the Jacobi polynomials;

\item \label{lemma:vii} $\{  \delta_{m}^{-1/2}Z_{o}^{m}\}  _{m=0}^{+\infty}$ is an
orthonormal basis of the subspace of $L^{2}(\mathcal{M})$ of zonal functions;

\item \label{lemma:viii} $\lambda^2_{m}=m(m+a+b+1)$;

\item \label{lemma:ix} and
\begin{align*}
\delta_{m}  &  =  (2m+a+b+1) \frac{\Gamma(  b+1)  }{\Gamma(
a+1)  \Gamma(  a+b+2)  }\frac{\Gamma(  m+a+b+1)
}{\Gamma(  m+b+1)  }\frac{\Gamma(  m+a+1)  }%
{\Gamma(  m+1)  } \asymp m^{2a+1}
\end{align*}
(when $m=0$, the product $(2m+a+b+1)\Gamma(m+a+b+1)$ must be replaced by $\Gamma(a+b+2)$; this way, the case $a=b=-1/2$ can be included). In particular, for $m\ge1$,
\begin{equation}
\label{dimensione}
\delta_m\leq \frac{a+1}{b+1}\left(2m+a+b+1\right)m^{2a}.
\end{equation}
\end{enumerate}
\end{lemma}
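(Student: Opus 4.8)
The plan is to prove the nine items in an order that leverages the reproducing-kernel structure of $\mathcal H_m$, leaving the two genuinely computational statements — the Jacobi identification and the dimension formula — for last. First I would establish (i): the candidate kernel $W(y)=\sum_{\ell=1}^{\delta_m}\overline{Y_m^\ell(x)}Y_m^\ell(y)$ belongs to $\mathcal H_m$, and the expression $\sum_\ell\overline{Y_m^\ell(x)}Y_m^\ell(y)$ is invariant under a unitary change of orthonormal basis, so it suffices to verify the reproducing property $\int_{\mathcal M}Y(y)W(y)\,d\mu(y)=Y(x)$ for a \emph{real-valued} orthonormal basis of $\mathcal H_m$ (one exists since $\mathcal H_m$ is the eigenspace of the real operator $\Delta$), for which $\int_{\mathcal M}Y_m^kY_m^\ell\,d\mu=\delta_{k\ell}$ and the computation is immediate; uniqueness in the defining Riesz relation forces $Z_x^m=W$. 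Evaluating the same basis-independent formula on the real basis shows $Z_x^m$ is real and symmetric, which is (ii).

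Next I would obtain (iii) from invariance: for an isometry $g$ the map $Y\mapsto Y\circ g^{-1}$ is unitary on $L^2(\mathcal M)$ and commutes with $\Delta$, hence sends an orthonormal basis of $\mathcal H_m$ to another one; since $W$ is basis-independent, $Z_{gx}^m(gy)=Z_x^m(y)$ follows. Specializing to $k\in K$ (which fixes $o$) gives that $Z_o^m$ is zonal and, more generally, that $Z_x^m(y)$ depends only on $\rho(x,y)$. Statement (iv) then follows by combining this with the zonal convolution identity proved above: $(Z_o^m*f)(x)=\int_{\mathcal M}Z_x^m(y)f(y)\,d\mu(y)$; decomposing $f=\sum_n\Pi_nf$ through \eqref{Decomp L2} and using reality of $Z_x^m$, the terms with $n\ne m$ vanish by orthogonality of the eigenspaces while the term $n=m$ reproduces $(\Pi_mf)(x)$. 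For (v), the reproducing property applied to $Y=Z_x^m$ yields $Z_x^m(x)=\|Z_x^m\|_2^2$; by (iii) and transitivity of $G$ the map $x\mapsto Z_x^m(x)$ is constant, so integrating $Z_x^m(x)=\sum_\ell|Y_m^\ell(x)|^2$ over $\mathcal M$ identifies this constant as $\delta_m$.

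The main obstacle is the Jacobi identification (vi) together with the eigenvalue formula (viii). Here I would use the radial expression \eqref{Radial Laplace Beltrami}: a zonal eigenfunction $f_0(\rho(\cdot,o))$ with eigenvalue $\lambda_m^2$ solves $(Af_0')'=-\lambda_m^2 Af_0$ on $(0,\pi)$, and substituting $t=\cos r$ with the explicit weight $A(r)\propto(\sin(r/2))^{2a+1}(\cos(r/2))^{2b+1}$ transforms this into the Jacobi differential equation with parameters $(a,b)$. The delicate point is that, among the two-dimensional space of local solutions, only a one-dimensional subspace is regular at both singular endpoints $r=0,\pi$; this solution is $P_m^{a,b}(\cos r)$, its eigenvalue is $m(m+a+b+1)$ — giving (viii) — and normalizing by $Z_o^m(o)=\delta_m$ from (v) and by $P_m^{a,b}(1)$ yields \eqref{Jacobi}. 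The same regularity fact gives the completeness half of (vii): orthonormality of $\{\delta_m^{-1/2}Z_o^m\}$ is immediate from (v) and $\mathcal H_m\perp\mathcal H_n$, while completeness holds because, via \eqref{IntegraleZonale}, zonal $L^2$ functions are isometric to $L^2([0,\pi],A(r)\,dr)$, in which the Jacobi polynomials $P_m^{a,b}(\cos r)$ form a complete orthogonal system.

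Finally, (ix) is a Gamma-function computation. Equating $\|Z_o^m\|_2^2=\delta_m$ from (v) with the norm of \eqref{Jacobi} gives $\delta_m=P_m^{a,b}(1)^2\big/\int_0^\pi P_m^{a,b}(\cos r)^2A(r)\,dr$; the substitution $t=\cos r$ turns the denominator into the classical Jacobi orthogonality integral, and inserting its standard closed form together with $P_m^{a,b}(1)=\Gamma(m+a+1)/(\Gamma(a+1)\,m!)$ and $c(a,b)=\Gamma(a+b+2)/(\Gamma(a+1)\Gamma(b+1))$ produces the stated product of Gamma functions after simplification. The asymptotics $\delta_m\sim m^{2a+1}$ and the bound \eqref{dimensione} then follow from the ratio identity $\Gamma(m+\alpha)/\Gamma(m+\beta)\sim m^{\alpha-\beta}$; I expect only careful algebra, not any conceptual difficulty, at this last stage.
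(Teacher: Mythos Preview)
Your outline is essentially sound, and in fact more comprehensive than the paper's own proof: the paper explicitly proves only part (iv) and the inequality \eqref{dimensione}, deferring (i)--(iii), (v)--(viii) and the dimension formula in (ix) to the references. Your arguments for (i)--(iii), (v)--(viii) are the standard ones and are correct.

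For (iv) your route differs from the paper's. You first deduce from (iii) that $Z_x^m(y)$ depends only on $\rho(x,y)$, then invoke the earlier zonal-convolution identity to write $(Z_o^m*f)(x)=\int_{\mathcal M}Z_x^m(y)f(y)\,d\mu(y)$ and finish with the reproducing property and orthogonality of eigenspaces. The paper instead works directly from the group integral: $Z_o^m(g^{-1}x)=Z_{go}^m(x)$ by (iii), then (i) and \eqref{eq:251} give $\sum_\ell\bigl(\int_{\mathcal M}f\,\overline{Y_m^\ell}\,d\mu\bigr)Y_m^\ell(x)=\Pi_mf(x)$ in one line. Both are fine; yours is slightly more conceptual, theirs slightly shorter.

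There is one genuine gap in your treatment of \eqref{dimensione}. The asymptotic $\Gamma(m+\alpha)/\Gamma(m+\beta)\sim m^{\alpha-\beta}$ gives only a limit, not an inequality valid for all $m\ge1$ with the specific constant $\tfrac{a+1}{b+1}$; ``careful algebra'' starting from that asymptotic will not produce \eqref{dimensione}. The paper's argument uses a structural fact you have not mentioned: by the classification of compact two-point homogeneous spaces, the parameters $a=\tfrac{d-2}{2}$, $b=\tfrac{d_0-2}{2}$ always satisfy that either $a+b$ or $a$ is a nonnegative integer (and $a\ge b\ge-\tfrac12$, $a\ge0$, the case $a=b=-\tfrac12$ being trivial). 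This lets one expand $\frac{\Gamma(m+a+b+1)}{\Gamma(m+1)}\cdot\frac{\Gamma(m+a+1)}{\Gamma(m+b+1)}$ (or the variant with the two ratios regrouped) as a \emph{finite product} of factors of the form $(m+c)$ with $c\ge1$, each bounded by $m\cdot(1+c)$; collecting constants yields exactly $\frac{\Gamma(a+b+2)\Gamma(a+2)}{\Gamma(b+2)}\,m^{2a}$, hence \eqref{dimensione}. Without this integer observation the sharp constant is not accessible by your sketch.
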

\begin{proof} Here we provide a proof only for \eqref{lemma:iv} and the last estimate of \eqref{lemma:ix}, which are not given in the references above.  By \eqref{lemma:i}, \eqref{lemma:iii} and \eqref{eq:251} we have that
\begin{align*}
Z_o^m\ast f(x)&=\int_G Z_o^m (g^{-1}x)f(go)dg=\int_G Z_{go}^m (x)f(go)dg\\
&=\int_G \sum_{\ell=1}^{\delta_m} \overline{Y_{m}^{\ell}( go) }Y_{m}^{\ell}(
x)f(go)dg=\sum_{\ell=1}^{\delta_m} \left(\int_{\mathcal M} f(y)\overline{Y_{m}^{\ell}( y) }d\mu(y)\right) Y_{m}^{\ell}(
x)=\Pi_m f(x),
\end{align*}
as claimed. 

Concerning \eqref{lemma:ix}, inequality \eqref{dimensione} is trivially satisfied for $a=b=-1/2$. In the remaining cases, $a\ge b\ge-1/2$ and $a\ge 0$. Furthermore, either $a+b$ or $a$ are nonnegative integers. If $a+b$ is a nonnegative integer, then
\begin{align*}
\frac{\Gamma(  m+a+b+1)
}{\Gamma(  m+1)  }\frac{\Gamma(  m+a+1)  }
{\Gamma(m+b+1)}&=(m+a+b)\ldots(m+1)(m+a)\ldots(m+b+1)\\
&\le m^{a+b}m^{a-b}(1+a+b)\ldots(2)(1+a)\ldots(1+b+1)\\
&=m^{2a}\frac{\Gamma(a+b+2)\Gamma(a+2)}{\Gamma(b+2)}.
\end{align*}
Similarly, if $a$ is a nonnegative integer, then
\begin{align*}
\frac{\Gamma(  m+a+b+1)
}{\Gamma(  m+b+1)  }\frac{\Gamma(  m+a+1)  }
{\Gamma(m+1)}&=(m+a+b)\ldots(m+b+1)(m+a)\ldots(m+1)\\
&\le m^{a}m^{a}(1+a+b)\ldots(1+b+1)(1+a)\ldots(2)\\
&=m^{2a}\frac{\Gamma(a+b+2)\Gamma(a+2)}{\Gamma(b+2)}.
\end{align*}
Thus, in any case,
\begin{align*}
    \delta_m&\le
    (2m+a+b+1) \frac{\Gamma(  b+1)  }{\Gamma(
a+1)  \Gamma(  a+b+2)  }
m^{2a}\frac{\Gamma(a+b+2)\Gamma(a+2)}{\Gamma(b+2)}\\
&=\frac{a+1}{b+1}\left(2m+a+b+1\right)m^{2a}.
\end{align*}
\end{proof}
Let $f,h$ in $L^2(\mathcal M)$. By Lemma~\ref{lem:zonal_rkhs} \eqref{lemma:vii} we have the following expansion of the zonal function $\widetilde{h}$ given in \eqref{eq:tilde_h}:
\[\widetilde{h}=\sum_{m=0}^{+\infty}b_m Z^m_o,\] 
where
\begin{align*}
b_m&=\frac{1}{\delta_m}\int_{\mathcal M} \widetilde{h}(y)Z_o^m(y)d\mu(y) =\frac{1}{\delta_m}\int_{\mathcal M} \int_K h(ky)Z_o^m(y)dkd\mu(y)\\& =\frac{1}{\delta_m} \int_K \int_{\mathcal M} h(x)Z_{o}^m(k^{-1}x)d\mu(x)dk
=\frac{1}{\delta_m}  \int_{\mathcal M} h(x)Z_{o}^m(x)d\mu(x).
\end{align*}
Also, by \eqref{IntegraleZonale} and \eqref{Jacobi}, we have
\begin{equation}
\label{bm}
b_m=\frac{1}{\delta_m}\int_{\mathcal M} \widetilde{h}(y)Z_o^m(y)d\mu(y)=\int_{0}^{\pi} \widetilde{h}_0(r)\frac{P_{m}^{a,b}( \cos r )}{P_{m}^{a,b}( 1) } A(r)dr . 
\end{equation}
Furthermore, by Lemma~\ref{lem:zonal_rkhs} \eqref{lemma:iv} the convolution of $h$ and $f$ can be written as 
\begin{align*}
h\ast f&=\widetilde{h}\ast f=\sum_{m=0}^{+\infty}b_m Z^m_o \ast f=\sum_{m=0}^{+\infty}b_m\Pi_mf.
\end{align*}
This means that the convolution with a function $h$ can be seen as a multiplier operator. 

\medskip

Let us now set $\mathcal B=\{Y^{\ell}_m\}_{m=0, \ell=1}^{+\infty,\delta_m}$ and $\mathcal E=\{\sqrt{m(m+a+b+1)}\}_{m=0,\ell=1}^{+\infty,\delta_m}$.

\begin{Proposition}\label{operator}
Take $\omega,\gamma\ge 0$. Let $h\in L^2(\mathcal M)$ such that $\widetilde h=\sum_{m=0}^{+\infty}b_m Z^m_o$, with
$$|b_m|\le c\big(1+\lambda_m^2\big)^{-\frac{\gamma}2}$$ for some $c>0$, and $b_0\neq 0$. Let $\sigma=\omega+\gamma$  and consider the linear bounded operator 
given by $Ff=h\ast f$. Then $F$ maps continuously $H^\omega(\mathcal B,\mathcal E)$ into $ H^\sigma(\mathcal B,\mathcal E)$, with norm bounded by $c$.
\end{Proposition}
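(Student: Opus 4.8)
The plan is to exploit the fact that, by the computation immediately preceding the statement, $F$ acts as a Fourier multiplier with symbol $\{b_m\}$, so that $Ff=\sum_{m=0}^\infty b_m\Pi_m f$, and that the norm of $H^\sigma(\mathcal B,\mathcal E)$ is diagonal with respect to the eigenspace decomposition $L^2(\mathcal M)=\bigoplus_m\mathcal H_m$ of \eqref{Decomp L2}. The whole argument is then the observation that multiplying the $m$-th spectral block by a factor of size $(1+\lambda_m^2)^{-\gamma/2}$ buys exactly $\gamma$ derivatives on the scale $H^\sigma(\mathcal B,\mathcal E)$.

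First I would rewrite the Sobolev norm in terms of the spectral projections $\Pi_m$. Since every basis element $Y_m^\ell$ spanning $\mathcal H_m$ is assigned the same value $\lambda_m$ in $\mathcal E$, grouping the terms of \eqref{eq:sobolev_norm} by eigenspace gives
\[
\|g\|_{H^\sigma(\mathcal B,\mathcal E)}^2=\sum_{m=0}^\infty(1+\lambda_m^2)^\sigma\|\Pi_m g\|_2^2
\]
for every $\sigma\ge0$, where the constancy of the weight within each $\mathcal H_m$ is what allows the factor $(1+\lambda_m^2)^\sigma$ to be pulled out of the $\ell$-sum. This identity will be used both for the domain norm ($\sigma=\omega$) and for the target norm ($\sigma=\omega+\gamma$).

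Next I would compute $\Pi_m(Ff)$. Because the eigenspaces are mutually orthogonal, $\Pi_m\Pi_n=\delta_{mn}\Pi_m$, whence $\Pi_m(Ff)=b_m\Pi_m f$. Substituting into the target norm and invoking the hypothesis $|b_m|^2\le c^2(1+\lambda_m^2)^{-\gamma}$ yields
\[
\|Ff\|_{H^\sigma(\mathcal B,\mathcal E)}^2=\sum_{m=0}^\infty(1+\lambda_m^2)^\sigma|b_m|^2\|\Pi_m f\|_2^2\le c^2\sum_{m=0}^\infty(1+\lambda_m^2)^{\sigma-\gamma}\|\Pi_m f\|_2^2 .
\]
Since $\sigma-\gamma=\omega$, the right-hand side equals $c^2\|f\|_{H^\omega(\mathcal B,\mathcal E)}^2$; this simultaneously shows $Ff\in H^\sigma(\mathcal B,\mathcal E)$ whenever $f\in H^\omega(\mathcal B,\mathcal E)$ and gives the norm bound $\|F\|\le c$.

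I do not expect any genuine obstacle here. The only points deserving a little care are the regrouping of the Sobolev series by eigenspaces, so that each weight $(1+\lambda_m^2)^\sigma$ factors cleanly out of its block, and a brief remark that the multiplier series for $Ff$ converges in $H^\sigma(\mathcal B,\mathcal E)$ (which is exactly what the displayed estimate establishes). Note that the hypothesis $b_0\neq0$ plays no role in the boundedness estimate itself; it is only needed later to ensure $1\in\operatorname{Im}F$, i.e.\ Assumption~\ref{operatore F}(ii), in the multi-resolution construction.
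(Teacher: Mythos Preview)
Your proposal is correct and follows essentially the same approach as the paper's own proof: write the $H^\sigma$ norm via the spectral projections $\Pi_m$, use $Ff=\sum_m b_m\Pi_m f$, and apply the bound $|b_m|^2\le c^2(1+\lambda_m^2)^{-\gamma}$ to collapse the $\sigma$-weights to $\omega$-weights. Your remark that $b_0\neq0$ is irrelevant for the boundedness estimate is also accurate.
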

\begin{proof} By the definition of the norm in $H^\sigma$ (see \eqref{eq:sobolev_norm}) and by Lemma~\ref{lem:zonal_rkhs}, part \eqref{lemma:viii}, we have
\begin{align*}
   \|Ff\|^2_{H^\sigma}&= \|\widetilde h\ast f\|^2_{H^\sigma}=\left\|\sum_{m=0}^{+\infty}b_m\Pi_mf\right\|^2_{H^\sigma}=\sum_{m=0}^{+\infty}b_m^2\|\Pi_mf\|_2^2(1+\lambda_m^2)^\sigma\\
   &\le c^2\sum_{m=0}^{+\infty}\|\Pi_mf\|_2^2(1+\lambda_m^2)^{\sigma-\gamma}=c^2\|f\|^2_{H^\omega}.\qedhere
\end{align*}
\end{proof}

\subsection{Sampling convolutions}\label{sub:sampling_convolutions}

The focus of the following result is the reconstruction of $Ff^\dagger$ from some noisy sampled measurements, where $F$ is a convolution operator.

\begin{Theorem}\label{finale}
Let $\mathcal M$ be a $d$-dimensional compact two-point homogeneous space as described above,  $d\ge2$. Assume that the nodes $\{x_{m,j}\}_{j=1}^{L_m}$, the weights $\{\tau_{m,j}\}_{j=1}^{L_m}$ and the sequence $r_m=\lambda_m$ are a Marcinkiewicz-Zygmund family for $\mathcal B$ and $\mathcal E$ with constants $1-\varepsilon$ and $1+\varepsilon$, with $\varepsilon>0$. 

Let $h\in L^2(\mathcal M)$ be such that $\widetilde h(x)=\sum_{m=0}^{+\infty}b_m Z^m_o(x)$, with $|b_m|\le c(1+\lambda_m^2)^{-\gamma/2}$ for some $c>0$, $\gamma\ge0$, and $b_m\neq 0$ for infinitely many values of $m$, including $m=0$. Let $\omega\ge0$, $\sigma=\omega+\gamma$  and consider the linear bounded operator $F\colon H^\omega(\mathcal B,\mathcal E)\to H^\sigma(\mathcal B,\mathcal E)$
given by $Ff=h\ast f$. Let $\zeta\ge0$, assume $\sigma-\zeta>d/2$ and take $f^\dagger\in H^{\omega}(\mathcal B,\mathcal E)$. Let $\{y_{m,j}\}$ be a
family of noisy outputs corresponding to $\{Ff^\dagger(x_{m,j})\}$,
\[
 |y_{m,j}- Ff^\dagger(x_{m,j})|\leq\beta \quad j=1,\ldots,L_m
\]
where $\beta \ge 0$ is the noise level. For  every $m\in\mathbb N$ let
\[
p_m^\beta\in\operatornamewithlimits{argmin}_{p\in \mathcal Q_{\lambda_m}(\mathcal B,\mathcal E)}\sum_{j=1}^{L_{m}}|y_{m,j}-Fp(x_{m,j})|^2\tau_{m,j}.
\]
Then
\begin{align*}
\|Fp_m^\beta-Ff^\dagger\|_{H^{\zeta}(\mathcal B,\mathcal E)}&\le  \sqrt{1+\kappa}\|Ff^\dagger\|_{H^\sigma(\mathcal B, \mathcal E)}\phi_{\sigma-\zeta}(\lambda_m,\mathcal B,\mathcal E)+\sqrt{\kappa}\beta(1+\lambda_m^2)^{\frac\zeta 2}\\
&\le 
\sqrt{\frac{d(1+\kappa)}{d_0(\sigma-\zeta-d/2)}}\frac{c}{(1+\lambda_m^2)^{(\sigma-\zeta-d/2)/2}}\|f^\dagger\|_{H^\omega(\mathcal B, \mathcal E)}+\sqrt{\kappa}\beta(1+\lambda_m^2)^{\frac\zeta 2}.
\end{align*}
Recall that $\kappa=\frac{1+\varepsilon}{1-\varepsilon}$, and
$d/d_0=1$ for the sphere, while $d/d_0=n$ for the projective space $P^n(\mathbb K)$.
\end{Theorem}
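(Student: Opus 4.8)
The plan is to recognize Theorem~\ref{finale} as a concrete instance of the abstract machinery of Section~\ref{sec:sampling_IP} and then to make the abstract quantities explicit using the harmonic analysis on $\mathcal M$. First I would set $X=H^\omega(\mathcal B,\mathcal E)$, $Y=H^\sigma(\mathcal B,\mathcal E)$, and take as multi-resolution structure the diffusion-polynomial spaces $G_m=\mathcal Q_{\lambda_m}(\mathcal B,\mathcal E)$; these are nested, finite-dimensional and dense in $X$, so Assumption~\ref{spazio X} holds. Since $F$ is the Fourier multiplier $Ff=\sum_m b_m\Pi_m f$, it preserves bandwidth, giving $F(G_m)\subseteq\mathcal Q_{\lambda_m}(\mathcal B,\mathcal E)=\mathcal Q_{r_m}(\mathcal B,\mathcal E)$ with $r_m=\lambda_m$; the condition $b_0\neq0$ yields $1=b_0^{-1}F(1)\in F(G_0)$, while $b_m\neq0$ for infinitely many $m$ gives $\dim\operatorname{Im}F=+\infty$, so Assumption~\ref{operatore F} holds. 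Assumption~\ref{spazio Y} is exactly the Marcinkiewicz--Zygmund hypothesis made on $\{x_{m,j}\}$. With these choices the adapted system produced by the construction of Section~\ref{sec:sampling_IP} is simply $\mathcal S=\{Y_m^\ell:b_m\neq0\}$, each $Y_m^\ell$ carrying the value $\Lambda=\lambda_m$, and the first inequality of the theorem is then Corollary~\ref{noisebis} applied verbatim.

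The only discrepancy is that Corollary~\ref{noisebis} is phrased in the $H^\bullet(\mathcal S,\mathcal L)$ norms while the statement uses $H^\bullet(\mathcal B,\mathcal E)$. I would dispose of this with a one-line identity: since $\Pi_m(Ff^\dagger)=b_m\Pi_m f^\dagger$ vanishes whenever $b_m=0$, and the $\Lambda$-value attached to $Y_m^\ell\in\mathcal S$ is exactly $\lambda_m$, the two norms of any element of $\operatorname{Im}F$ agree term by term, so $\|Ff^\dagger\|_{H^\sigma(\mathcal S,\mathcal L)}=\|Ff^\dagger\|_{H^\sigma(\mathcal B,\mathcal E)}$ and likewise in $H^\zeta$. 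The noise term $\sqrt\kappa\,\beta(1+\lambda_m^2)^{\zeta/2}$ passes through unchanged, which turns the abstract estimate into the first displayed inequality.

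For the second inequality I must bound $\phi_{\sigma-\zeta}(\lambda_m,\mathcal S,\mathcal L)$ explicitly. Using the addition formula $\sum_\ell|Y_m^\ell(x)|^2=Z_x^m(x)=\delta_m$ from Lemma~\ref{lem:zonal_rkhs}\eqref{lemma:i},\eqref{lemma:v}, the supremum over $x$ disappears and one gets $\phi_{\sigma-\zeta}(\lambda_m,\mathcal S,\mathcal L)^2\le\sum_{\ell>m}\delta_\ell(1+\lambda_\ell^2)^{-(\sigma-\zeta)}$. I would then insert the dimension estimate \eqref{dimensione}, namely $\delta_\ell\le\frac{d}{d_0}(2\ell+a+b+1)\ell^{2a}$ (recalling $a+1=d/2$, $b+1=d_0/2$), together with $\ell^{2a}\le(1+\lambda_\ell^2)^a$, valid because $a\ge0$ for $d\ge2$ and $\ell^2\le\lambda_\ell^2$.

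The main technical step is the resulting series estimate. Writing $s=\sigma-\zeta-d/2>0$ and noting that $2\ell+a+b+1=\frac{d}{d\ell}(1+\lambda_\ell^2)$, the summand becomes $\frac{d}{d_0}(2\ell+a+b+1)(1+\lambda_\ell^2)^{-s-1}$, which I would dominate by the integral $\frac{d}{d_0}\int_m^\infty(2t+a+b+1)(1+\lambda_t^2)^{-s-1}\,dt$; the substitution $u=1+\lambda_t^2$ evaluates this to $\frac{d}{d_0 s}(1+\lambda_m^2)^{-s}$. The one point requiring care, and the principal obstacle, is justifying the sum--integral comparison, that is the monotonicity of $t\mapsto(2t+a+b+1)(1+\lambda_t^2)^{-s-1}$ (equivalently the convexity of $t\mapsto(1+\lambda_t^2)^{-s}$), which holds on the relevant range since $1+\lambda_t^2$ is a convex quadratic. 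This yields $\phi_{\sigma-\zeta}(\lambda_m,\mathcal S,\mathcal L)\le\sqrt{\frac{d}{d_0(\sigma-\zeta-d/2)}}\,(1+\lambda_m^2)^{-s/2}$, and combining it with the bound $\|Ff^\dagger\|_{H^\sigma}\le c\|f^\dagger\|_{H^\omega}$ of Proposition~\ref{operator} produces exactly the second displayed inequality.
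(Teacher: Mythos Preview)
Your argument is correct and matches the paper's proof almost line for line: verify the three assumptions, apply Corollary~\ref{noisebis} with $\mathcal S=\{Y_m^\ell:b_m\neq0\}$, identify the $(\mathcal S,\mathcal L)$ and $(\mathcal B,\mathcal E)$ norms on $\operatorname{Im}F$, bound $\phi_{\sigma-\zeta}(\lambda_m,\mathcal S,\mathcal L)\le\phi_{\sigma-\zeta}(\lambda_m,\mathcal B,\mathcal E)$, and estimate the latter via the addition formula, \eqref{dimensione}, and the integral comparison with the substitution $u=1+\lambda_t^2$. The only difference is cosmetic: you explicitly flag the monotonicity needed for the sum--integral comparison, which the paper leaves implicit.
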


The existence of a Marcinkiewicz-Zygmund family, as in the hypotheses of Theorem~\ref{finale}, is guaranteed by Theorems~\ref{MZ polys} and \ref{partition}.

\begin{proof} 
We want to apply Corollary \ref{noisebis}. Assumption~\ref{spazio Y} is satisfied by hypothesis. In particular, notice that the space $\mathcal Q_{\lambda_m}(\mathcal B,\mathcal E)$ coincides with
the space $G_m:=\bigoplus_{s=0}^{m}
\mathcal{H}_{s}$
and Assumption \ref{spazio X} follows.
Finally, since $F(G_m)\subseteq G_m$, $1\in F(G_0)$ (since $G_0=\mathcal{H}_0=\operatorname{span}\{1\}$ and $b_0\neq 0$), and $\dim \operatorname{Im} F=+\infty$ (since $b_m\neq0$ for infinitely many values of $m$), then also Assumption~\ref{operatore F}  holds. 

 We may therefore apply Corollary \ref{noisebis}, with $\mathcal S=\{Y_m^\ell: m=0,\ldots,+\infty \text{  s.t. } b_m\neq 0, \ell=1,\ldots,\delta_m\}\subseteq \mathcal B$  and $\mathcal L=\{\sqrt{m(m+a+b+1)}: m=0,\ldots,+\infty \text{  s.t. } b_m\neq 0, \  \ell=1,\ldots,\delta_m\}\subseteq\mathcal E$ and we obtain
\[
\|Ff^\dagger-Fp_m^\beta\|_{H^\zeta(\mathcal S,\mathcal L)}\le \sqrt{1+\kappa}\|Ff^\dagger\|_{H^\sigma(\mathcal S, \mathcal L)}\phi_{\sigma-\zeta}(\lambda_{m}, \mathcal S, \mathcal L)+\sqrt{\kappa}\beta(1+\lambda_m^2)^{\frac\zeta2}.
\]
Notice that $\|Ff^\dagger\|_{H^\sigma(\mathcal S,\mathcal L)}=\|Ff^\dagger\|_{H^\sigma(\mathcal B,\mathcal E)}<+\infty$ by Proposition~\ref{operator} and that, since $\sigma>d/2+\zeta\ge d/2$, then $Ff^\dagger\in H^{\sigma}(\mathcal B,\mathcal E)$ is continuous. Also,
\begin{align*}
\phi_{\sigma-\zeta}(\lambda_{m},\mathcal S,\mathcal L)&=\sup_{x\in\mathcal M}\left(\sum_{n>m,\ b_n\neq 0}\sum_{\ell=1}^{\delta_n}|Y_n^\ell(x)|^2(1+\lambda_n^2)^{-(\sigma-\zeta)}\right)^{\frac12}\\
&\leq  \sup_{x\in\mathcal M}\left(\sum_{n>m}\sum_{\ell=1}^{\delta_n}|Y_n^\ell(x)|^2(1+\lambda_n^2)^{-(\sigma-\zeta)}\right)^{\frac12}=\phi_{\sigma-\zeta}(\lambda_m,\mathcal B,\mathcal E).
\end{align*}%
Since $ Z_x^n(x)=\delta_n$ by Lemma~\ref{lem:zonal_rkhs} \eqref{lemma:v}, and since by \eqref{dimensione}
\[
\delta_n\le \frac{a+1}{b+1}\left(2n+a+b+1\right)(1+\lambda_n^2)^{a},
\]
by  Lemma~\ref{lem:zonal_rkhs} \eqref{lemma:i} and \eqref{eq:abdd0}  it follows that
\begin{align*}
\phi_{\sigma-\zeta}(\lambda_m,\mathcal B,\mathcal E)^2&=\sup_{x}
\sum_{n>m}\sum_{\ell=1}^{\delta_n}|Y_n^\ell(x)|^2
(1+\lambda_n^2)^{-(\sigma-\zeta)}
\\&=\sup_x\sum_{n>m}Z_x^n(x)(1+\lambda_n^2)^{-(\sigma-\zeta)}\\
&\leq\frac{a+1}{b+1} \sum_{n>m} (2n+a+b+1)(1+\lambda_n^2)^{a+\zeta-\sigma}\\ 
&\leq \frac{a+1}{b+1}\int_{m}^{+\infty} (2x+a+b+1)(1+x(x+a+b+1))^{a+\zeta-\sigma} \,dx\\ 
&=\frac{a+1}{b+1}\int_{1+m(m+a+b+1)}^{+\infty} y^{a+\zeta-\sigma} dy\\ 
&=\frac{a+1}{b+1}\frac{1}{\sigma-\zeta-a-1}\frac{1}{(1+m(m+a+b+1))^{\sigma-\zeta-a-1}}\\
&=\frac{d}{d_0}\frac{1}{\sigma-\zeta- d/2}\frac{1}{(1+\lambda_m^2)^{\sigma-\zeta-\frac{d}2}}.
\end{align*}
This concludes the proof. 
\end{proof}

\begin{remark}
It is clear from the above proof that the term $\phi_{\sigma-\zeta}(\lambda_m,\mathcal B,\mathcal E)$ in the thesis of the theorem can be replaced by $\phi_{\sigma-\zeta}(\lambda_m,\mathcal S,\mathcal L)$. Of course, when several terms of the sequence $b_m$ vanish, then $\phi_{\sigma-\zeta}(\lambda_m,\mathcal S,\mathcal L)$ may be asymptotically smaller than $\phi_{\sigma-\zeta}(\lambda_m,\mathcal B,\mathcal E)$.
\end{remark}

In the following result, we investigate the reconstruction of $f^\dagger$.

\begin{Corollary}\label{inverso}
Under the hypotheses of Theorem~\ref{finale}, assume furthermore that $\zeta\ge\gamma$ and that, for some constant $c_0>0$,
\[
|b_m|\ge c_0(1+\lambda_m^2)^{-\zeta/2}, \text{ for all }m\ge0.
\]
Then
\begin{align*}
\|f^\dagger-p_m^\beta\|_{2}&\le c_0^{-1} \|Ff^\dagger-Fp_m^\beta\|_{H^\zeta(\mathcal B, \mathcal E)}\\
&\le  c_0^{-1}\sqrt{1+\kappa}\|Ff^\dagger\|_{H^\sigma(\mathcal B, \mathcal E)}\phi_{\sigma-\zeta}(\lambda_m,\mathcal B,\mathcal E)+\sqrt{\kappa}\beta(1+\lambda_m^2)^{\frac\zeta 2}\\
&\le 
\frac{c}{c_0}\sqrt{\frac{d(1+\kappa)}{d_0(\sigma-\zeta-d/2)}}\frac{1}{(1+\lambda_m^2)^{(\sigma-\zeta-d/2)/2}}\|f^\dagger\|_{H^\omega(\mathcal B, \mathcal E)}+\sqrt{\kappa}\beta(1+\lambda_m^2)^{\frac\zeta 2}.
\end{align*}
\end{Corollary}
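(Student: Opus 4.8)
The plan is to reduce the whole statement to the first inequality,
\[
\|f^\dagger-p_m^\beta\|_2\le c_0^{-1}\|Ff^\dagger-Fp_m^\beta\|_{H^\zeta(\mathcal B,\mathcal E)},
\]
since the remaining two bounds then follow immediately by inserting the estimate for $\|Ff^\dagger-Fp_m^\beta\|_{H^\zeta(\mathcal B,\mathcal E)}$ already established in Theorem~\ref{finale}. Thus the entire content is the passage from the $H^\zeta$-norm of $F(f^\dagger-p_m^\beta)$ to the $L^2$-norm of $f^\dagger-p_m^\beta$, which amounts to inverting the multiplier $F$ on its range.

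To carry this out I would use the multiplier representation $Ff=\sum_{m}b_m\Pi_m f$ recalled just before Proposition~\ref{operator}, together with Lemma~\ref{lem:zonal_rkhs} \eqref{lemma:iv}, so that $\Pi_m(Fg)=b_m\Pi_m g$ for every $g\in L^2(\mathcal M)$ and every $m$. Setting $g=f^\dagger-p_m^\beta$, which belongs to $L^2(\mathcal M)$ because $f^\dagger\in H^\omega\subseteq L^2$ and $p_m^\beta$ is a diffusion polynomial, the definition~\eqref{eq:sobolev_norm} of the Sobolev norm, grouped by eigenspaces, gives
\[
\|Fg\|_{H^\zeta(\mathcal B,\mathcal E)}^2=\sum_{m=0}^{+\infty}|b_m|^2\,(1+\lambda_m^2)^{\zeta}\,\|\Pi_m g\|_2^2.
\]

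The key step is then to insert, term by term, the lower bound $|b_m|^2\ge c_0^2(1+\lambda_m^2)^{-\zeta}$. The factor $(1+\lambda_m^2)^{-\zeta}$ cancels exactly against the weight $(1+\lambda_m^2)^{\zeta}$ of the $H^\zeta$-norm — this is precisely why the decay exponent imposed on $|b_m|$ from below and the smoothness order $\zeta$ of the target space are taken equal — leaving
\[
\|Fg\|_{H^\zeta(\mathcal B,\mathcal E)}^2\ge c_0^2\sum_{m=0}^{+\infty}\|\Pi_m g\|_2^2=c_0^2\,\|g\|_2^2,
\]
which is the claimed inequality. The left-hand side is finite by Theorem~\ref{finale}, so the bound is non-vacuous; and the standing assumption $\zeta\ge\gamma$ is used only to make the two-sided control $c_0(1+\lambda_m^2)^{-\zeta/2}\le|b_m|\le c(1+\lambda_m^2)^{-\gamma/2}$ consistent as $m\to+\infty$, and plays no further role in the estimate.

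There is no genuinely hard step here: the proof is a one-line spectral estimate, dual to the smoothing bound of Proposition~\ref{operator}. The only point deserving care is the bookkeeping — confirming that $F$ acts diagonally as $\Pi_m(Fg)=b_m\Pi_m g$ on each eigenspace, and that the exponents in the Sobolev weight and in the lower bound on $|b_m|$ match, so that the cancellation is exact rather than merely asymptotic.
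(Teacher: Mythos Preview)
Your proof is correct and follows essentially the same route as the paper: both reduce to the spectral inequality $\|Fg\|_{H^\zeta}^2=\sum_m|b_m|^2(1+\lambda_m^2)^\zeta\|\Pi_m g\|_2^2\ge c_0^2\|g\|_2^2$, and then invoke Theorem~\ref{finale}. The paper phrases this as a bound on $\|F^{-1}\|$ from $H^\zeta$ to $L^2$, while you bound $\|Fg\|_{H^\zeta}$ from below directly, but the computation is the same.
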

\begin{proof}
Note that, since $b_m\neq 0$ for every $m$, we have that $F$ is injective, with inverse given by
\[
F^{-1} f = \sum_{m} b_m^{-1} \Pi_m f.
\]
Thus
\[
\|F^{-1} f^\dagger\|_2^2 =\sum_{m} |b_m|^{-2} \|\Pi_m f^\dagger\|_2^2 
\le c_0^{-2} \sum_{m} (1+\lambda_m^2)^{\zeta} \|\Pi_m f^\dagger\|_2^2
=  c_0^{-2} \|f^\dagger\|_{H^\zeta}^2.
\]
Therefore,
\[
\|f^\dagger-p_m^\beta\|_2 \le c_0^{-1} \|Ff^\dagger-Fp_m^\beta\|_{H^\zeta}.
\]
Applying this to the estimates in Theorem~\ref{finale}, the thesis follows.
\end{proof}

The following proposition provides an alternative description of
Sobolev spaces and an explicit formula for computing the coefficients
$b_m$. 
\begin{Proposition}
\label{bim}
Let $K$ be a nonnegative integer. Then 
\begin{enumerate}[(i)]
\item \label{prop:i}  $H^{2K}(\mathcal B,\mathcal E)=\left\{f\in L^2(\mathcal M): \ (-\Delta)^K f\in  L^2(\mathcal M)\right\}$;
\item if $h=\sum_{m=0}^{+\infty} b_mZ^m_o \in H^{2K}(\mathcal B,\mathcal E)$, then 
\[
b_0=\int_{\mathcal M} h(x)d\mu(x)
\]
and for $m\ge1$ 
\[b_m=\frac{1}{\delta_m(m(m+a+b+1))^{K}}\int_{\mathcal M} (-\Delta)^K h(x)Z^m_o(x)d\mu(x);\]
\item for $m\ge 1$, \[|b_m|\leq \frac{\|(-\Delta)^K h\|_2}{\delta_m^{\frac12}(m(m+a+b+1))^{K}}\leq
\frac{2^{K}\|(-\Delta)^K h\|_2}{\delta_m^{\frac12}(1+m(m+a+b+1))^{K}}.
\]
\end{enumerate}
\end{Proposition}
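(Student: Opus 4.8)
The plan is to exploit that $-\Delta$ is diagonalised by the orthogonal decomposition~\eqref{Decomp L2}, acting on each eigenspace $\mathcal H_m$ as multiplication by $\lambda_m^2=m(m+a+b+1)$ (Lemma~\ref{lem:zonal_rkhs}~\eqref{lemma:viii}); hence $(-\Delta)^K$ acts on $\mathcal H_m$ as multiplication by $\lambda_m^{2K}=(m(m+a+b+1))^K$. All three assertions then become bookkeeping with this spectral representation, recalling that $\mathcal B$ is a complete orthonormal basis so $H(\mathcal B)=L^2(\mathcal M)$, that $\|f\|_{H^{2K}(\mathcal B,\mathcal E)}^2=\sum_{m}(1+\lambda_m^2)^{2K}\|\Pi_m f\|_2^2$, and that the $Z_o^m$ lie in distinct eigenspaces with $\|Z_o^m\|_2^2=\delta_m$ (Lemma~\ref{lem:zonal_rkhs}~\eqref{lemma:v}).

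For part~\eqref{prop:i}, I would write, for $f=\sum_m\Pi_m f\in L^2(\mathcal M)$,
\[
\|(-\Delta)^K f\|_2^2=\sum_{m}\lambda_m^{4K}\|\Pi_m f\|_2^2 ,
\]
so that $(-\Delta)^K f\in L^2(\mathcal M)$ if and only if $\sum_m\lambda_m^{4K}\|\Pi_m f\|_2^2<+\infty$, while $f\in H^{2K}(\mathcal B,\mathcal E)$ if and only if $\sum_m(1+\lambda_m^2)^{2K}\|\Pi_m f\|_2^2<+\infty$. The coincidence of these two conditions follows from the elementary bound $\max\{1,t^{2K}\}\le(1+t)^{2K}\le 2^{2K}\max\{1,t^{2K}\}$ with $t=\lambda_m^2$, which gives $(1+\lambda_m^2)^{2K}\le 2^{2K}(1+\lambda_m^{4K})$ and $1+\lambda_m^{4K}\le 2(1+\lambda_m^2)^{2K}$; thus $\|f\|_{H^{2K}}^2$ and $\|f\|_2^2+\|(-\Delta)^K f\|_2^2$ are comparable and the two spaces are equal.

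For part~(ii), orthogonality of the $Z_o^m$ forces $b_m=\delta_m^{-1}\langle h,Z_o^m\rangle_2$ (using that $Z_o^m$ is real-valued). Since $Z_o^0\equiv 1$, this yields $b_0=\int_{\mathcal M}h\,d\mu$. For $m\ge 1$ I would use that $Z_o^m$ is a smooth eigenfunction, so $(-\Delta)^K Z_o^m=\lambda_m^{2K}Z_o^m$; self-adjointness of $(-\Delta)^K$ --- legitimate because $h\in H^{2K}$ gives $(-\Delta)^K h\in L^2(\mathcal M)$ by part~\eqref{prop:i} --- then gives
\[
\int_{\mathcal M}(-\Delta)^K h\,Z_o^m\,d\mu=\langle h,(-\Delta)^K Z_o^m\rangle_2=\lambda_m^{2K}\langle h,Z_o^m\rangle_2=(m(m+a+b+1))^K\,\delta_m\,b_m ,
\]
which is the claimed formula after dividing by $\delta_m(m(m+a+b+1))^K$.

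Finally, part~(iii) follows by Cauchy--Schwarz applied to the last integral together with $\|Z_o^m\|_2=\delta_m^{1/2}$, yielding the first bound; the second is immediate from $1+m(m+a+b+1)\le 2\,m(m+a+b+1)$, valid for $m\ge1$ since $m(m+a+b+1)\ge a+b+2=(d+d_0)/2\ge1$. I do not expect a genuine obstacle: the only delicate point is the spectral identity $\langle(-\Delta)^K h,Z_o^m\rangle_2=\lambda_m^{2K}\langle h,Z_o^m\rangle_2$, which is why I would first invoke part~\eqref{prop:i} to guarantee $(-\Delta)^K h\in L^2(\mathcal M)$ and then pair against the finite-dimensional eigenfunction $Z_o^m$, rather than differentiating the series $\sum_m b_m Z_o^m$ term by term.
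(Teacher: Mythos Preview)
Your proposal is correct and follows essentially the same approach as the paper: the paper's proof simply declares part~\eqref{prop:i} ``well known'' and derives the rest from the defining formula $b_m=\delta_m^{-1}\int_{\mathcal M}h\,Z_o^m\,d\mu$, the self-adjointness of $\Delta$, and the eigenvalue identity $(-\Delta)^K Z_o^m=(m(m+a+b+1))^K Z_o^m$, exactly as you do. Your version is more detailed (in particular you justify the elementary inequality in part~(iii) and the norm comparability in part~\eqref{prop:i}), but there is no substantive difference in strategy.
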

\begin{proof}
The equivalence in \eqref{prop:i} is well known, see, for example, \cite{MR1688256}. The rest follows from the definition of $b_m$ as
\[
b_m=\frac 1{\delta_m}\int_{\mathcal M} h(x) Z^m_o(x) d\mu(x),
\]
from the self-adjointness of $\Delta$ and the fact that $(-\Delta)^K Z^m_o = (m(m+a+b+1))^K Z^m_o$.
\end{proof}
\begin{remark}\label{bmsfera}
Notice that in the case of the sphere $S^2$, if $h\in H^{2K}(\mathcal B,\mathcal E)$ for some integer $K$, then by Proposition~\ref{bim},
for $m\ge 1$
\[
|b_m|\le\frac{2^{K}\|(-\Delta)^K h\|_2}{(2m+1)^{\frac12}(1+m(m+1))^{K}}\leq \frac{2^{K}\|(-\Delta)^K h\|_2}{(1+m(m+1))^{\frac{4K+1}4}}.
\]
\end{remark}

\section*{Acknowledgments}
Co-funded by the European Union (ERC, SAMPDE, 101041040). Views and opinions expressed are however those of the authors only and do not necessarily reflect those of the European Union or the European Research Council. Neither the European Union nor the granting authority can be held responsible for them. The authors are members of the “Gruppo Nazionale per l’Analisi Matematica, la Probabilità e le loro Applicazioni”, of the “Istituto Nazionale di Alta Matematica”.
The research was supported in part by the MIUR Excellence Department Project awarded to Dipartimento di Matematica, Università di Genova, CUP D33C23001110001. Finanziato dall’Unione europea-Next Generation EU, Missione 4 Componente 1 CUP D53D23005770006. B. Gariboldi and G. Gigante are supported by the PRIN 2022 project "TIGRECO - TIme-varying signals on Graphs: REal and COmplex methods" funded by the MUR (Ministero dell'Università e della Ricerca), Grant 20227TRY8H, CUP F53D23002630001. The research by G.S.\ Alberti and E.\ De Vito has been partially supported by the PNRR project “Harmonic Analysis and Optimization in Infinite-Dimensional Statistical - Future Artificial Intelligence  Fair – Spoke 10” (CUP J33C24000410007). The research by E.\ De Vito has been partially supported by the MIUR grant PRIN 202244A7YL.

\bibliographystyle{abbrv}
\bibliography{refs}

\begin{thebibliography}{10}

\bibitem{AbhishakeHelinMücke+2025+201+244}
Abhishake, T.~Helin, and N.~Mücke.
\newblock {\em Statistical inverse learning problems with random observations},
  pages 201--244.
\newblock De Gruyter, Berlin, Boston, 2025.

\bibitem{abraham-nickl-2019}
K.~Abraham and R.~Nickl.
\newblock On statistical {C}alder\'on problems.
\newblock {\em Math. Stat. Learn.}, 2(2):165--216, 2019.

\bibitem{adcock-hansen-etal-2013}
B.~Adcock, A.~C. Hansen, E.~Herrholz, and G.~Teschke.
\newblock Generalized sampling: extension to frames and inverse and ill-posed
  problems.
\newblock {\em Inverse Problems}, 29(1):015008, 27, 2013.

\bibitem{alberti-arroyo-santacesaria}
G.~S. Alberti, A.~Arroyo, and M.~Santacesaria.
\newblock Inverse problems on low-dimensional manifolds.
\newblock {\em Nonlinearity}, 36(1):734--808, 2023.

\bibitem{alberti2025compressed}
G.~S. Alberti, A.~Felisi, M.~Santacesaria, and S.~I. Trapasso.
\newblock Compressed sensing for inverse problems and the sample complexity of
  the sparse radon transform.
\newblock {\em Journal of the European Mathematical Society}, 2025.

\bibitem{alberti2025compressed2}
G.~S. Alberti, A.~Felisi, M.~Santacesaria, and S.~I. Trapasso.
\newblock {Compressed sensing for inverse problems II: applications to
  deconvolution, source recovery, and MRI}.
\newblock {\em arXiv preprint arXiv:2501.01929}, 2025.

\bibitem{alberti-santacesaria-2021}
G.~S. Alberti and M.~Santacesaria.
\newblock Infinite dimensional compressed sensing from anisotropic measurements
  and applications to inverse problems in {PDE}.
\newblock {\em Appl. Comput. Harmon. Anal.}, 50:105--146, 2021.

\bibitem{Giovanni}
G.~S. Alberti and M.~Santacesaria.
\newblock Infinite-dimensional inverse problems with finite measurements.
\newblock {\em Arch. Ration. Mech. Anal.}, 243(1):1--31, 2022.

\bibitem{BenedettoFerreira2001}
J.~J. Benedetto and P.~J. S.~G. Ferreira, editors.
\newblock {\em Modern sampling theory}.
\newblock Applied and Numerical Harmonic Analysis. Birkh\"auser Boston, Inc.,
  Boston, MA, 2001.
\newblock Mathematics and applications.

\bibitem{bertero2021introduction}
M.~Bertero, P.~Boccacci, and C.~De~Mol.
\newblock {\em Introduction to inverse problems in imaging}.
\newblock CRC press, 2021.

\bibitem{bilyk}
D.~Bilyk, M.~Mastrianni, and S.~Steinerberger.
\newblock Single radius spherical cap discrepancy via gegenbadly approximable
  numbers.
\newblock {\em Adv. Math.}, 452:Paper No. 109812, 18, 2024.

\bibitem{blanchard-mucke-2018}
G.~Blanchard and N.~M\"ucke.
\newblock Optimal rates for regularization of statistical inverse learning
  problems.
\newblock {\em Found. Comput. Math.}, 18(4):971--1013, 2018.

\bibitem{BGG}
L.~Brandolini, B.~Gariboldi, and G.~Gigante.
\newblock Irregularities of {D}istribution on {T}wo-{P}oint {H}omogeneous
  {S}paces.
\newblock In {\em The {M}athematical {H}eritage of {G}uido {W}eiss}, Appl.
  Numer. Harmon. Anal., pages 101--125. Birkh\"auser/Springer, Cham, 2025.

\bibitem{BGGM1}
L.~Brandolini, B.~Gariboldi, G.~Gigante, and A.~Monguzzi.
\newblock Single radius spherical cap discrepancy on compact two-point
  homogeneous spaces.
\newblock {\em Math. Z.}, 308(3):Paper No. 52, 16, 2024.

\bibitem{bubba-burger-helin-ratti-2023}
T.~A. Bubba, M.~Burger, T.~Helin, and L.~Ratti.
\newblock Convex regularization in statistical inverse learning problems.
\newblock {\em Inverse Probl. Imaging}, 17(6):1193--1225, 2023.

\bibitem{bubba-ratti-2022}
T.~A. Bubba and L.~Ratti.
\newblock Shearlet-based regularization in statistical inverse learning with an
  application to x-ray tomography.
\newblock {\em Inverse Problems}, 38(5):Paper No. 054001, 43, 2022.

\bibitem{dashti-harris-stuart-2012}
M.~Dashti, S.~Harris, and A.~Stuart.
\newblock Besov priors for {B}ayesian inverse problems.
\newblock {\em Inverse Probl. Imaging}, 6(2):183--200, 2012.

\bibitem{ebner-haltmeier-2023}
A.~Ebner and M.~Haltmeier.
\newblock Convergence rates for the joint solution of inverse problems with
  compressed sensing data.
\newblock {\em Inverse Problems}, 39(1):Paper No. 015011, 16, 2023.

\bibitem{cubature}
M.~Ehler, U.~Etayo, B.~Gariboldi, G.~Gigante, and T.~Peter.
\newblock Asymptotically optimal cubature formulas on manifolds for prefixed
  weights.
\newblock {\em J. Approx. Theory}, 271:Paper No. 105632, 24, 2021.

\bibitem{engl-etal-1996}
H.~W. Engl, M.~Hanke, and A.~Neubauer.
\newblock {\em Regularization of inverse problems}, volume 375 of {\em
  Mathematics and its Applications}.
\newblock Kluwer Academic Publishers Group, Dordrecht, 1996.

\bibitem{FM1}
F.~Filbir and H.~N. Mhaskar.
\newblock A quadrature formula for diffusion polynomials corresponding to a
  generalized heat kernel.
\newblock {\em J. Fourier Anal. Appl.}, 16(5):629--657, 2010.

\bibitem{FM}
F.~Filbir and H.~N. Mhaskar.
\newblock Marcinkiewicz-{Z}ygmund measures on manifolds.
\newblock {\em J. Complexity}, 27(6):568--596, 2011.

\bibitem{Fo}
G.~B. Folland.
\newblock {\em A course in abstract harmonic analysis}.
\newblock Textbooks in Mathematics. CRC Press, Boca Raton, FL, second edition,
  2016.

\bibitem{forster}
K.-J. F\"orster.
\newblock Inequalities for ultraspherical polynomials and application to
  quadrature.
\newblock In {\em Proceedings of the {S}eventh {S}panish {S}ymposium on
  {O}rthogonal {P}olynomials and {A}pplications ({VII} {SPOA}) ({G}ranada,
  1991)}, volume~49, pages 59--70, 1993.

\bibitem{Gangolli}
R.~Gangolli.
\newblock Positive definite kernels on homogeneous spaces and certain
  stochastic processes related to {L}\'evy's {B}rownian motion of several
  parameters.
\newblock {\em Ann. Inst. H. Poincar\'e{} Sect. B (N.S.)}, 3:121--226, 1967.

\bibitem{design}
B.~Gariboldi and G.~Gigante.
\newblock Optimal asymptotic bounds for designs on manifolds.
\newblock {\em Anal. PDE}, 14(6):1701--1724, 2021.

\bibitem{GL}
G.~Gigante and P.~Leopardi.
\newblock Diameter bounded equal measure partitions of {A}hlfors regular metric
  measure spaces.
\newblock {\em Discrete Comput. Geom.}, 57(2):419--430, 2017.

\bibitem{giordano-nickl-2020}
M.~Giordano and R.~Nickl.
\newblock Consistency of {B}ayesian inference with {G}aussian process priors in
  an elliptic inverse problem.
\newblock {\em Inverse Problems}, 36(8):085001, 35, 2020.

\bibitem{Gro}
K.~Gr\"ochenig.
\newblock Sampling, {M}arcinkiewicz-{Z}ygmund inequalities, approximation, and
  quadrature rules.
\newblock {\em J. Approx. Theory}, 257:105455, 20, 2020.

\bibitem{healy-etal-1998}
D.~M. Healy, Jr., H.~Hendriks, and P.~T. Kim.
\newblock Spherical deconvolution.
\newblock {\em J. Multivariate Anal.}, 67(1):1--22, 1998.

\bibitem{MR1688256}
E.~Hebey.
\newblock {\em Nonlinear analysis on manifolds: {S}obolev spaces and
  inequalities}, volume~5 of {\em Courant Lecture Notes in Mathematics}.
\newblock New York University, Courant Institute of Mathematical Sciences, New
  York; American Mathematical Society, Providence, RI, 1999.

\bibitem{H}
S.~Helgason.
\newblock {\em Groups and geometric analysis}, volume~83 of {\em Mathematical
  Surveys and Monographs}.
\newblock American Mathematical Society, Providence, RI, 2000.
\newblock Integral geometry, invariant differential operators, and spherical
  functions, Corrected reprint of the 1984 original.

\bibitem{helin-2024}
T.~Helin.
\newblock Least squares approximations in linear statistical inverse learning
  problems.
\newblock {\em SIAM J. Numer. Anal.}, 62(4):2025--2047, 2024.

\bibitem{herrholz-teschke-2010}
E.~Herrholz and G.~Teschke.
\newblock Compressive sensing principles and iterative sparse recovery for
  inverse and ill-posed problems.
\newblock {\em Inverse Problems}, 26(12):125012, 24, 2010.

\bibitem{hielscher-quellmalz-2015}
R.~Hielscher and M.~Quellmalz.
\newblock Optimal mollifiers for spherical deconvolution.
\newblock {\em Inverse Problems}, 31(8):085001, 28, 2015.

\bibitem{HOR}
L.~H\"ormander.
\newblock {\em The analysis of linear partial differential operators. {III}}.
\newblock Classics in Mathematics. Springer, Berlin, 2007.
\newblock Pseudo-differential operators, Reprint of the 1994 edition.

\bibitem{isakov-2017}
V.~Isakov.
\newblock {\em Inverse problems for partial differential equations}, volume 127
  of {\em Applied Mathematical Sciences}.
\newblock Springer, Cham, third edition, 2017.

\bibitem{kaipio-somersalo-2007}
J.~Kaipio and E.~Somersalo.
\newblock Statistical inverse problems: discretization, model reduction and
  inverse crimes.
\newblock {\em J. Comput. Appl. Math.}, 198(2):493--504, 2007.

\bibitem{kaltenbacher-neubauer-scherzer-2008}
B.~Kaltenbacher, A.~Neubauer, and O.~Scherzer.
\newblock {\em Iterative regularization methods for nonlinear ill-posed
  problems}, volume~6 of {\em Radon Series on Computational and Applied
  Mathematics}.
\newblock Walter de Gruyter GmbH \& Co. KG, Berlin, 2008.

\bibitem{kekkonen-lassas-siltanen-2014}
H.~Kekkonen, M.~Lassas, and S.~Siltanen.
\newblock Analysis of regularized inversion of data corrupted by white
  {G}aussian noise.
\newblock {\em Inverse Problems}, 30(4):045009, 18, 2014.

\bibitem{kerkyacharian-etal-2011}
G.~Kerkyacharian, T.~M. Pham~Ngoc, and D.~Picard.
\newblock Localized spherical deconvolution.
\newblock {\em Ann. Statist.}, 39(2):1042--1068, 2011.

\bibitem{kim-koo-2002}
P.~T. Kim and J.-Y. Koo.
\newblock Optimal spherical deconvolution.
\newblock {\em J. Multivariate Anal.}, 80(1):21--42, 2002.

\bibitem{Landau1967}
H.~J. Landau.
\newblock Necessary density conditions for sampling and interpolation of
  certain entire functions.
\newblock {\em Acta Mathematica}, 117:37--52, 1967.

\bibitem{lassas-etal-2009}
M.~Lassas, E.~Saksman, and S.~Siltanen.
\newblock Discretization-invariant {B}ayesian inversion and {B}esov space
  priors.
\newblock {\em Inverse Probl. Imaging}, 3(1):87--122, 2009.

\bibitem{lazzaro-morigi-ratti-2024}
D.~Lazzaro, S.~Morigi, and L.~Ratti.
\newblock Oracle-net for nonlinear compressed sensing in electrical impedance
  tomography reconstruction problems.
\newblock {\em J. Sci. Comput.}, 101(2):Paper No. 49, 29, 2024.

\bibitem{leopardi}
P.~Leopardi.
\newblock A partition of the unit sphere into regions of equal area and small
  diameter.
\newblock {\em Electron. Trans. Numer. Anal.}, 25:309--327, 2006.

\bibitem{LLG}
J.~Li, Y.~Ling, J.~Geng, and H.~Wang.
\newblock Weighted least {$\ell_p$} approximation on compact {R}iemannian
  manifolds.
\newblock {\em J. Fourier Anal. Appl.}, 30(5):Paper No. 57, 29, 2024.

\bibitem{LW}
W.~Lu and H.~Wang.
\newblock Approximation and quadrature by weighted least squares polynomials on
  the sphere.
\newblock {\em Pure Appl. Funct. Anal.}, 8(2):565--581, 2023.

\bibitem{maggioni}
M.~Maggioni and H.~N. Mhaskar.
\newblock Diffusion polynomial frames on metric measure spaces.
\newblock {\em Appl. Comput. Harmon. Anal.}, 24(3):329--353, 2008.

\bibitem{mcewen-wiaux-2011}
J.~D. McEwen and Y.~Wiaux.
\newblock A novel sampling theorem on the sphere.
\newblock {\em IEEE Trans. Signal Process.}, 59(12):5876--5887, 2011.

\bibitem{MNW}
H.~N. Mhaskar, F.~J. Narcowich, and J.~D. Ward.
\newblock Spherical {M}arcinkiewicz-{Z}ygmund inequalities and positive
  quadrature.
\newblock {\em Math. Comp.}, 70(235):1113--1130, 2001.

\bibitem{monard-stefanov-2023}
F.~c. Monard and P.~Stefanov.
\newblock Sampling the {X}-ray transform on simple surfaces.
\newblock {\em SIAM J. Math. Anal.}, 55(3):1707--1736, 2023.

\bibitem{natterer-2001}
F.~Natterer.
\newblock {\em The mathematics of computerized tomography}, volume~32 of {\em
  Classics in Applied Mathematics}.
\newblock Society for Industrial and Applied Mathematics (SIAM), Philadelphia,
  PA, 2001.
\newblock Reprint of the 1986 original.

\bibitem{nickl-2023}
R.~Nickl.
\newblock {\em Bayesian non-linear statistical inverse problems}.
\newblock Zurich Lectures in Advanced Mathematics. EMS Press, Berlin, [2023]
  \copyright 2023.

\bibitem{nickl-paternain-2023}
R.~Nickl and G.~P. Paternain.
\newblock On some information-theoretic aspects of non-linear statistical
  inverse problems.
\newblock In {\em I{CM}---{I}nternational {C}ongress of {M}athematicians.
  {V}ol. 7. {S}ections 15--20}, pages 5516--5538. EMS Press, Berlin, [2023]
  \copyright 2023.

\bibitem{Nyquist1928}
H.~Nyquist.
\newblock Certain topics in telegraph transmission theory.
\newblock {\em Transactions of the American Institute of Electrical Engineers},
  47(2):617--644, 1928.

\bibitem{Penrose}
R.~Penrose.
\newblock On best approximation solutions of linear matrix equations.
\newblock {\em Proc. Cambridge Philos. Soc.}, 52:17--19, 1956.

\bibitem{quellmalz2018cone}
M.~Quellmalz, R.~Hielscher, and A.~K. Louis.
\newblock The cone-beam transform and spherical convolution operators.
\newblock {\em Inverse Problems}, 34(10):105006, 2018.

\bibitem{RSZ}
E.~A. Rakhmanov, E.~B. Saff, and Y.~M. Zhou.
\newblock Minimal discrete energy on the sphere.
\newblock {\em Math. Res. Lett.}, 1(6):647--662, 1994.

\bibitem{radon100}
R.~Ramlau and O.~Scherzer, editors.
\newblock {\em The {R}adon transform---the first 100 years and beyond},
  volume~22 of {\em Radon Series on Computational and Applied Mathematics}.
\newblock Walter de Gruyter GmbH \& Co. KG, Berlin, [2019] \copyright 2019.

\bibitem{abhishake-blanchard-mathe-2020}
A.~Rastogi, G.~Blanchard, and P.~Math\'e.
\newblock Convergence analysis of {T}ikhonov regularization for non-linear
  statistical inverse problems.
\newblock {\em Electron. J. Stat.}, 14(2):2798--2841, 2020.

\bibitem{MR1992465}
B.~Rubin.
\newblock Notes on {R}adon transforms in integral geometry.
\newblock {\em Fract. Calc. Appl. Anal.}, 6(1):25--72, 2003.

\bibitem{handbook}
O.~Scherzer, editor.
\newblock {\em Handbook of mathematical methods in imaging. {V}ol. 1, 2, 3}.
\newblock Springer, New York, second edition, 2015.

\bibitem{Shannon1949}
C.~E. Shannon.
\newblock Communication in the presence of noise.
\newblock {\em Proceedings of the IRE}, 37(1):10--21, 1949.

\bibitem{articolotesi}
M.~Simeoni.
\newblock Functional penalised basis pursuit on spheres.
\newblock {\em Appl. Comput. Harmon. Anal.}, 53:1--53, 2021.

\bibitem{tesi}
M.~M. J.-A. Simeoni.
\newblock {\em Functional Inverse Problems on Spheres: Theory, Algorithms and
  Applications}.
\newblock PhD thesis, EPFL, Lausanne, 2020.

\bibitem{Skriganov}
M.~M. Skriganov.
\newblock Point distributions in two-point homogeneous spaces.
\newblock {\em Mathematika}, 65(3):557--587, 2019.

\bibitem{stefanov-2020}
P.~Stefanov.
\newblock Semiclassical sampling and discretization of certain linear inverse
  problems.
\newblock {\em SIAM J. Math. Anal.}, 52(6):5554--5597, 2020.

\bibitem{stefanov-tindel-2023}
P.~Stefanov and S.~Tindel.
\newblock Sampling linear inverse problems with noise.
\newblock {\em Asymptot. Anal.}, 132(3-4):331--382, 2023.

\bibitem{SW}
E.~M. Stein and G.~Weiss.
\newblock {\em Introduction to {F}ourier analysis on {E}uclidean spaces},
  volume No. 32 of {\em Princeton Mathematical Series}.
\newblock Princeton University Press, Princeton, NJ, 1971.

\bibitem{Sz}
G.~Szeg\H{o}.
\newblock {\em Orthogonal polynomials}, volume Vol. XXIII of {\em American
  Mathematical Society Colloquium Publications}.
\newblock American Mathematical Society, Providence, RI, fourth edition, 1975.

\bibitem{MR57963}
P.~Ungar.
\newblock Freak theorem about functions on a sphere.
\newblock {\em J. London Math. Soc.}, 29:100--103, 1954.

\bibitem{vanrooij-ruymgaart-1991}
A.~C.~M. van Rooij and F.~H. Ruymgaart.
\newblock Regularized deconvolution on the circle and the sphere.
\newblock In {\em Nonparametric functional estimation and related topics
  ({S}petses, 1990)}, volume 335 of {\em NATO Adv. Sci. Inst. Ser. C: Math.
  Phys. Sci.}, pages 679--690. Kluwer Acad. Publ., Dordrecht, 1991.

\bibitem{vareschi-2014}
T.~Vareschi.
\newblock Application of second generation wavelets to blind spherical
  deconvolution.
\newblock {\em J. Multivariate Anal.}, 124:398--417, 2014.

\bibitem{Wang}
H.-C. Wang.
\newblock Two-point homogeneous spaces.
\newblock {\em Ann. of Math. (2)}, 55:177--191, 1952.

\bibitem{Whittaker1915}
E.~T. Whittaker.
\newblock On the functions which are represented by the expansions of the
  interpolation theory.
\newblock {\em Proceedings of the Royal Society of Edinburgh}, 35:181--194,
  1915.

\bibitem{Wolf}
J.~A. Wolf.
\newblock {\em Spaces of constant curvature}.
\newblock University of California, Department of Mathematics, Berkeley, CA,
  second edition, 1972.

\end{thebibliography}

\end{document}